\pgfplotsset{compat=1.14}
\newtheorem{thm}{Theorem}[section]
\newtheorem{theorem}[thm]{Theorem}
\newtheorem{lemma}[thm]{Lemma}
\newtheorem{lem}[thm]{Lemma}
\newtheorem{prop}[thm]{Proposition}
\newtheorem{cor}[thm]{Corollary}
\newtheorem{claim}[thm]{Claim}
\theoremstyle{plain}
\newcommand{\Z}{\mathbb{Z}}
\newcommand{\ep}{\varepsilon}
\newcommand{\R}{\mathbb{R}}
\newcommand{\N}{\mathbb N}
\DeclareMathOperator{\cov}{Cov}
\DeclareMathOperator{\var}{Var}
\renewcommand{\P}{\mathbb{P}}
\newcommand{\E}{\mathbb{E}}
\newcommand{\cE}{\mathcal{E}}
\newcommand{\eqd}{\,{\buildrel d \over =}\,}
\DeclareMathOperator{\GE}{GE}
\DeclareMathOperator{\std}{Std}
\DeclareMathOperator{\med}{Med}
\DeclareMathOperator{\supp}{Supp}
\DeclareMathOperator{\DE}{DE}
\DeclareMathOperator{\Cov}{Cov}
\newcommand{\eps}{\varepsilon}
\newtheorem*{assumption*}{\assumptionnumber}
\providecommand{\assumptionnumber}{}
\newtheorem*{disorder*}{\assumptionnumber}
\providecommand{\assumptionnumber}{}
\title{Minimal surfaces in strongly correlated random environments}
\author{Barbara Dembin}
\address{Barbara Dembin\hfill\break
    IRMA, CNRS et Université de Strasbourg, Strasbourg,France}
\email{barbara.dembin@math.unistra.fr}
\author{Dor Elboim}
\address{Dor Elboim\hfill\break
    Department of Mathematics,
    Stanford University,
    California, United States.}
\email{dorelboim@gmail.com}
\author{Ron Peled}
\address{Ron Peled\hfill\break Department of Mathematics, University of Maryland, College Park, United States.\hfill\break
School of Mathematical Sciences, Tel Aviv University, Tel Aviv, Israel.}
\email{peledron@tauex.tau.ac.il}
\date{\today}
\begin{document}

\begin{abstract}
A minimal surface in a random environment (MSRE) is a $d$-dimensional surface in $(d+n)$-dimensional space which minimizes the sum of its elastic energy and its environment potential energy, subject to prescribed boundary values. Apart from their intrinsic interest, such surfaces are further motivated by connections with disordered spin systems and first-passage percolation models. In this work, we consider the case of strongly correlated environments, realized by the model of harmonic MSRE in a fractional Brownian environment of Hurst parameter $H\in(0,1)$. This includes the case of Brownian environment ($H=1/2$ and $n=1$), which is commonly used to approximate the domain walls of the $(d+1)$-dimensional random-field Ising model.

We prove that surfaces of dimension $d\in\{1,2,3\}$  delocalize with power-law fluctuations, and determine their precise transversal and minimal energy fluctuation exponents, as well as the stretched exponential exponents governing the tail decay of their distributions. These exponents are found to be the same in all codimensions $n$, depending only on $d$ and $H$. The transversal and minimal energy fluctuation exponents are specified by two scaling relations.

We further show that surfaces of dimension $d=4$ delocalize with sub-power-law fluctuations, with their height and minimal energy fluctuations tied by a scaling relation. Lastly, we prove that surfaces of dimensions $d\ge 5$ localize.

These results put several predictions from the physics literature on mathematically rigorous ground.
\end{abstract}

\maketitle

\setcounter{tocdepth}{1}
\tableofcontents

\section{Introduction}

The mathematical theory of minimal surfaces
has been a topic of enduring interest since the 18th century works of Euler and Lagrange (see~\cite{colding2011course, meeks2012survey, de2022regularity}). In~\cite{dembin2024minimal} we embarked on a mathematical exploration of the properties of minimal surfaces \emph{in a random environment}, i.e., surfaces placed in an inhomogeneous, random media, which minimize the sum of an internal (elastic) energy and an environment potential energy subject to prescribed boundary values. Apart from its intrinsic interest, the study of such surfaces is further motivated by connections with disordered spin systems and first-passage percolation models.

We study $d$-dimensional minimal surfaces in a $(d+n)$-dimensional random environment (termed the disorder) using the model of \emph{harmonic} minimal surfaces in a random environment (see Section~\ref{subsection : the model} below), whose special structure facilitates the analysis.
Our previous work~\cite{dembin2024minimal} was mainly concerned with disorders with short-range correlations (``independent disorder''). Here we consider \emph{strongly correlated random enviroments}, which we realize by fractional Brownian fields of Hurst parameter $H\in(0,1)$. In the special case $H=1/2$ and $n=1$, the disorder becomes a two-sided Brownian motion; a case known in the physics literature as \emph{random-field disorder} and commonly used to approximate the domain walls of the $(d+1)$-dimensional random-field Ising model (see \cite[Section~6.2]{dembin2024minimal}). A limiting case $H=1$ (``linear disorder'') admits an exact solution; see, e.g.,~\cite{dario2023random}  and~\cite[Section 6.3]{dembin2024minimal}.

Our main results show that the minimal surfaces behave very differently in dimensions $d<4$, $d=4$ and $d>4$:
\begin{enumerate}
\item In low dimensions ($d<4$), we prove that the surfaces delocalize with power-law fluctuations. The exponents governing the delocalization are found to be the same for all codimensions $n$, depending only on $d$ and $H$, with the formulas
\begin{equation}\label{eq:exponent predictions}
    \begin{split}
        \xi &:= \frac{4-d}{4-2H},\\
        \chi &:= \frac{4-d}{2-H} + d-2,
    \end{split}
\end{equation}
for the roughness exponent $\xi$ (also called the transversal fluctuation exponent) and the minimal energy fluctuation exponent $\chi$. The exponents are uniquely defined by the following two scaling relations
\begin{align}
    \chi &= 2\xi + d-2,\label{eq:scaling relation}\\
    \chi &= H\xi + \frac{d}{2}.\label{eq:second scaling relation}
\end{align}

We further identify the exponents governing the stretched exponential tail behavior of the height and minimal energy distributions.

\item In the critical dimension ($d=4$), we prove that the surfaces delocalize, but only at a sub-power-law rate. We further show that their height and minimal energy fluctuations are tied together by a version of the scaling relation~\eqref{eq:second scaling relation}, suitably interpreted to account for the sub-power-law fluctuations.
\item In high dimensions ($d>4$), we prove that the surfaces localize and have minimal energy fluctuation exponent $\chi=\frac{d}{2}$. We further identify the exponent in the stretched exponential tail decay of the height distribution, which is shown to be independent of $d$ and $n$, depending only on $H$.
\end{enumerate}

The results put several predictions from the physics literature on rigorous ground (See Section~\ref{sec:predictions}).

\smallskip
The next sections describe the model, our results and relevant predictions from the physics literature.

\subsection{The model}\label{subsection : the model}

Let $d,n\ge 1$ be integers. We model a $d$-dimensional minimal surface in a $(d+n)$-dimensional random environment by the ground configurations (functions of minimal energy) of the following Hamiltonian. The surface is modeled by a function $\varphi:\Z^d\to\R^n$, defined on the cubic lattice $\Z^d$ and having $n$ components. Given an \emph{environment} $\eta:\Z^d\times\R^n\to(-\infty,\infty]$, later taken to be random and termed the \emph{disorder}, and an \emph{environment strength} $\lambda>0$, the \emph{formal Hamiltonian} for $\varphi$ is
\begin{equation}\label{eq:formal Hamiltonian}
    H^{\eta,\lambda}(\varphi):=\frac{1}{2}\sum_{u\sim v}\|\varphi_u - \varphi_v\|^2 + \lambda \sum_{v} \eta_{v,\varphi_v},
\end{equation}
where $\|\cdot\|$ is the Euclidean norm in $\R^n$ and $u\sim v$ indicates that $u,v\in\Z^d$ are adjacent.
Our goal is to study the minimizers of $H^{\eta,\lambda}$ in finite domains with prescribed boundary values.
Given a finite $\Lambda\subset\Z^d$ and a function $\tau:\Z^d\to\R^n$, the \emph{finite-volume Hamiltonian} in~$\Lambda$ is given by
\begin{equation}\label{eq:finite volume Hamiltonian}
H^{\eta,\lambda,\Lambda}(\varphi):=\frac{1}{2}\sum_{\substack{u\sim v\\\{u,v\}\cap\Lambda\neq\emptyset}}\|\varphi_u - \varphi_v\|^2 + \lambda \sum_{v\in\Lambda} \eta_{v,\varphi_v},
\end{equation}
and the \emph{configuration space with boundary values $\tau$ outside $\Lambda$} is given by
\begin{equation}\label{set}
   \Omega^{\Lambda,\tau} := \{\varphi:\Z^d\to\R^n\colon \varphi_v=\tau_v\text{ for $v\in\Z^d\setminus\Lambda$}\}.
\end{equation}
We write $\varphi^{\eta,\lambda,\Lambda,\tau}$ for the \emph{ground configuration} of the finite-volume model, i.e., for the $\varphi\in \Omega^{\Lambda,\tau}$ which minimizes $H^{\eta,\lambda,\Lambda}$ (it is shown in Proposition~\ref{prop:existence} below that for our choice of environment $\eta $ the minimizer exists and is unique). We let
\begin{equation}
\GE^{\eta,\lambda,\Lambda,\tau}:=H^{\eta,\lambda,\Lambda}(\varphi^{\eta,\lambda,\Lambda,\tau})
\end{equation}
be the \emph{ground energy}.

To lighten notation, we shall omit the superscript $\tau$ in the (common) case that $\tau\equiv 0$.

\subsubsection{Background}
We briefly comment on earlier studies of the model~\eqref{eq:formal Hamiltonian}; see~\cite[Section 1.4.1]{dembin2024minimal} for additional details. The physics literature contains extensive references to~\eqref{eq:formal Hamiltonian} (sometimes in the continuum), going back at least to the works of Grinstein--Ma~\cite{grinstein1982roughening} and Villain~\cite{villain1982commensurate} (who considered it in relation with the random-field Ising model); see the reviews~\cite{forgacs1991behavior,giamarchi1998statics, wiese2003functional, giamarchi2009disordered,ferrero2021creep,wiese2022theory}, as well as our own discussion~\cite[Section 6]{dembin2024minimal} of different contexts in which the model arises. In the mathematically rigorous literature, however, the model has so far received limited attention. Besides our earlier study~\cite{dembin2024minimal}, we are only aware of the following:
\begin{enumerate}
	\item Bakhtin et al.~\cite{bakhtin2014space, bakhtin2016inviscid,bakhtin2019thermodynamic, bakhtin2018zero, bakhtin2022dynamic} and Berger--Torri~\cite{berger2019entropy,berger2021beyond} consider the $d=n=1$ case with ``independent disorder''.
	\item There are related studies for $d=1$ and general $n$ of directed Gaussian polymers in a Poissonian environment (see, e.g., the review by Comets--Cosco~\cite{comets2018brownian}).
	\item The case of general dimension $d$ has recently been considered in the limit $n\to\infty$, with a different focus: Ben Arous--Bourgade--McKenna~\cite{ben2024landscape} study its annealed landscape complexity and Ben Arous--Kivimae~\cite{arous2024free, arous2024larkin} obtain a Parisi-type formula for it.
	\item Just prior to the first appearance of this paper, Otto--Palmieri--Wagner~\cite{otto2025minimizing} published a study of the $d=n=1$ case with Brownian disorder ($H=1/2$); see the discussion after Theorem~\ref{thm:main 123}.
\end{enumerate}
In particular, this work and our earlier~\cite{dembin2024minimal} may be the only mathematically rigorous works so far to study the geometry of the surfaces minimizing~\eqref{eq:formal Hamiltonian} in dimensions $d\ge 2$.

\subsubsection{Fractional Brownian disorder}\label{sec:frac}

In this work we focus exclusively on the case where the disorder $\eta$ is independent and identically distributed between vertices, with $t\mapsto\eta_{v,t}$ an \emph{$n$-dimensional fractional Brownian field of Hurst parameter $0<H<1$} for each $v$. We proceed to make this precise.

\smallskip
{\bf Fractional Brownian field}: An $n$-dimensional fractional Brownian field of Hurst parameter $0<H\le 1$ is a (sample-path) continuous, centered, Gaussian process $(B^H_t)_{t\in\R^n}$, normalized to satisfy $B^H_0=0$, whose covariance is described by
\begin{equation}\label{eq:covariance by variance}
	\var(B^H_t - B^H_s) = \|t-s\|^{2H},\qquad t,s\in\R^n.
\end{equation}
The covariance is equivalently described by
\begin{equation}\label{eq:covariance}
	\Cov(B^H_t,B^H_s)=\frac 12(\|t\|^{2H}+\|s\|^{2H}-\|t-s\|^{2H}),\qquad t,s\in\R^n.
\end{equation}
See, e.g.,~\cite{ossiander1989certain} for a short proof that~\eqref{eq:covariance} is indeed positive definite so that such a Gaussian process exists. The existence of a continuous modification is a straightforward consequence of (a generalization of) Kolmogorov's theorem (see, e.g., \cite[Theorem 4.23]{Kallenberg}).

In the above, for random variables $X,Y\in\R^n$, we write $\var(X):=\E(\|X - \E(X)\|^2)$ and $\Cov(X,Y):=\E((X-\E(X))\cdot(Y-\E(Y)))$, with $\cdot$ being the standard inner product in $\R^n$.

\smallskip
{\bf Special cases}: $H=1/2$ and $n=1$: In this case, $B^H$ is a (two-sided) \emph{Brownian motion}.

$H=1$: In this case, $B^H$ degenerates to a \emph{linear function with a random slope}; precisely,
\begin{equation}\label{eq:H=1 fractional Brownian field}
(B_t^{H=1})_{t\in\R^n} \eqd (\zeta\cdot t)_{t\in\R^n},
\end{equation}
where $\zeta$ has the standard Gaussian distribution in $\R^n$. 

\smallskip
{\bf Properties}:
While fractional Brownian fields are not stationary, it is important for our arguments that they have \emph{stationary increments}, in the sense that
\begin{equation}\label{eq:stationary increments for fractional Brownian field}
	\text{for each $s\in\R^n$, the process $(B^H_{t+s} - B^H_s)_{t\in\R^n}$ has the same distribution as $B^H$.}
\end{equation}
We also note the scaling covariance and isometry invariance properties:
\begin{align}
	&\text{for each $\sigma\in\R$, the process $(B^H_{\sigma t})_{t\in\R^n}$ has the same distribution as $|\sigma|^{H} B^H$,}\label{eq:fBM self similarity}\\
	&\text{for each $n\times n$ orthogonal $T$, the process $(B^H_{Tt})_{t\in\R^n}$ has the same distribution as $B^H$.}
\end{align}
These properties are simple consequences of~\eqref{eq:covariance by variance}.

\smallskip
{\bf Fractional Brownian disorder}: As mentioned, we take the disorder $\eta$ to satisfy that $(\eta_{v,\cdot})_{v\in\Z^d}$ are independent, with $t\mapsto\eta_{v,t}$ having the distribution of $B^H$ for each $v$.

We will repeatedly use the following consequence of our choice of $\eta$ and~\eqref{eq:stationary increments for fractional Brownian field}: For $s:\Z^d\to\R^n$, define the \emph{shifted and recentered disorder} $\eta^s:\Z^d\times\R^n\to\R$ by
\begin{equation}\label{eq:eta s def}
	\eta^s_{v,t}:=\eta_{v,t-s_v}-\eta_{v,-s_v}.
\end{equation}
Then, the disorder has stationary increments in the sense that
\begin{equation}\label{eq:distribution of shifted disorder}
	\text{for every $s:\Z^d\to\R^n$, the process $\eta^s$ has the same distribution as $\eta$.}
\end{equation}

In our main results, we restrict to $0<H<1$. The case $H=1$, sometimes termed random-rod model~\cite{grinstein1982roughening,forgacs1991behavior}, Larkin model (following Larkin~\cite{larkin1970effect}; c.f.~\cite{giamarchi2009disordered}) or linear disorder~\cite{dembin2024minimal}, admits an exact solution, as discussed, e.g., in~\cite{dario2023random}  and~\cite[Section 6.3]{dembin2024minimal}. In particular, it is known that for $H=1$, the minimal surfaces delocalize with the exponent formulas~\eqref{eq:exponent predictions} in dimensions $d<4$, delocalize to height $\sqrt{\log L}$ in dimension $d=4$ and are localized in dimensions $d>4$ (with the notation of Section~\ref{sec:main results}).

\subsection{Basic facts} Before stating our main results, we briefly comment on the effects of the disorder strength $\lambda$ and boundary values $\tau$ and explain that we will restrict attention to the case that $\lambda=1$ and $\tau\equiv 0$. We also note that the minimal surface is indeed well defined.

\subsubsection{The dependence on the disorder strength $\lambda$}\label{sec:lambda} The self-similarity of fractional Brownian fields implies that the disorder strength parameter $\lambda$ only enters the model through scaling. To see this, observe that the Hamiltonian~\eqref{eq:finite volume Hamiltonian} satisfies, for each $\sigma\in\R$, the deterministic identity,
\begin{equation}
H^{\eta,\lambda,\Lambda}(\sigma\varphi)=\sigma^2\bigg(\frac{1}{2}\sum_{\substack{u\sim v\\ \{u,v\}\cap\Lambda\neq\emptyset}}\|\varphi_u - \varphi_v\|^2 + \sigma^{-2}\lambda \sum_{v\in\Lambda} \eta_{v,\sigma\varphi_v}\bigg)=\sigma^2 H^{\zeta,1,\Lambda}(\varphi)
\end{equation}
where $(\zeta_{v,t})$ is defined by $\zeta_{v,t}:=\sigma^{-2}\lambda \eta_{v,\sigma t}$. Now, the self-similarity of fractional Brownian fields~\eqref{eq:fBM self similarity}, implies that $\zeta$ has the same distribution as $\eta$ when $\sigma=\lambda^{\frac 1{2-H}}$. Consequently, without loss of generality,
\begin{equation*}
    \text{we set $\lambda = 1$ in the rest of the paper and remove it from all notation.}
\end{equation*}

\subsubsection{Existence and uniqueness} The next proposition shows that the minimal surface is well defined for all finite domains and boundary values.

\begin{prop}\label{prop:existence}
    Let $\Lambda\subset\Z^d$ be finite. Let $\tau:\Z^d\to\R^n$. Almost surely, there exists a unique $\varphi\in\Omega^{\Lambda,\tau}$ such that
    \begin{equation}
        H^{\eta,\Lambda}(\varphi) = \min_{\psi\in\Omega^{\Lambda,\tau}} H^{\eta,\Lambda}(\psi).
    \end{equation}
\end{prop}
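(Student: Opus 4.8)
The plan is a Weierstrass compactness argument for existence and a vertex-elimination induction for uniqueness, the latter resting on a lemma asserting that a fractional Brownian field plus an independent coercive potential has an almost surely unique minimizer.

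\emph{Existence.} Restricted to $\Omega^{\Lambda,\tau}$, the functional $H^{\eta,\Lambda}$ is a function of the finitely many real coordinates $(\varphi_v)_{v\in\Lambda}\in\R^{n|\Lambda|}$, continuous there since each path $t\mapsto\eta_{v,t}$ is continuous almost surely. I would then check coercivity. Viewed as a function of $(\varphi_v)_{v\in\Lambda}$ with the values on $\Z^d\setminus\Lambda$ frozen to $\tau$, the elastic quadratic form is the quadratic form of the Dirichlet Laplacian of the finite graph on $\Lambda$ with boundary $\Z^d\setminus\Lambda$ (tensored with the identity on $\R^n$); it is positive definite because every vertex of the finite set $\Lambda$ is connected within $\Z^d$ to its complement, so the elastic term dominates $\theta\sum_{v\in\Lambda}\|\varphi_v\|^2-C$ for some $\theta>0$ and $C<\infty$. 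Since $\var(\eta_{v,t})=\|t\|^{2H}$ with $2H<2$, Gaussian concentration over dyadic scales gives, almost surely, $|\eta_{v,t}|\le C_v(1+\|t\|^{H})$ for all $t$, so the environment term is $\ge-\tfrac{\theta}{2}\sum_{v\in\Lambda}\|\varphi_v\|^2-C'$. Hence $H^{\eta,\Lambda}$ is continuous and coercive on $\R^{n|\Lambda|}$ and attains its minimum.

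\emph{Uniqueness.} I would induct on $|\Lambda|$. Fix $v_0\in\Lambda$ adjacent to $\Z^d\setminus\Lambda$ (possible since $\Lambda$ is finite), set $\Lambda':=\Lambda\setminus\{v_0\}$, and for $x\in\R^n$ let $\tau^x$ agree with $\tau$ off $\{v_0\}$ and equal $x$ at $v_0$. Minimizing $H^{\eta,\Lambda}$ over the coordinates in $\Lambda'$ with $\varphi_{v_0}=x$ held fixed shows
\[
\min_{\varphi\in\Omega^{\Lambda,\tau}}H^{\eta,\Lambda}(\varphi)=\min_{x\in\R^n}\bigl(G(x)+\eta_{v_0,x}\bigr),\qquad G(x):=\GE^{\eta,\Lambda',\tau^x}+\tfrac12\!\!\sum_{\substack{w\sim v_0\\ w\notin\Lambda}}\!\!\|x-\tau_w\|^2 .
\]
Here $G$ is continuous and, crucially, measurable with respect to $(\eta_v)_{v\in\Lambda'}$, hence independent of $\eta_{v_0,\cdot}$; and $G+\eta_{v_0,\cdot}$ attains its minimum by the existence step. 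Any minimizer $\varphi$ of $H^{\eta,\Lambda}$ satisfies that $\varphi_{v_0}$ minimizes $G+\eta_{v_0,\cdot}$ and $(\varphi_v)_{v\in\Lambda'}$ minimizes $H^{\eta,\Lambda',\tau^{\varphi_{v_0}}}$, so uniqueness of $\varphi$ reduces to (i) almost sure uniqueness of the minimizer $x^\ast$ of $G+\eta_{v_0,\cdot}$, and (ii) almost sure uniqueness of the minimizer of $H^{\eta,\Lambda',\tau^{x^\ast}}$. For (i) I apply, conditionally on $(\eta_v)_{v\in\Lambda'}$, the key lemma above. For (ii), condition on $(\eta_v)_{v\in\Lambda'}$: the problem $H^{\eta,\Lambda',\tau^y}$ is then deterministic for every boundary value $y$, and by the induction hypothesis together with Fubini the set $B'\subseteq\R^n$ of values $y$ with more than one minimizer is Lebesgue-null almost surely; since, conditionally, $x^\ast$ is the minimizer of a fixed continuous potential plus the independent field $\eta_{v_0,\cdot}$, a strengthening of the key lemma to absolute continuity of the conditional law of $x^\ast$ yields $\P(x^\ast\in B')=0$. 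The base case $|\Lambda|=1$ is itself an instance of the key lemma.

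\emph{Main obstacle.} The crux is the key lemma on the minimizer of a fractional Brownian field plus an independent potential, and in particular the absolute-continuity refinement. For $H=\tfrac12$, $n=1$ it is classical via the Markov property: cutting the line at a rational point, the two one-sided minima are conditionally independent given the value there, and the minimum of Brownian motion over an interval has an atomless law, so a tie is a null event, and local-time considerations give a density. For general $H\in(0,1)$ there is no Markov property; one uses instead that $\eta_{v_0,\cdot}$ is a continuous Gaussian field with non-degenerate increments ($\var(\eta_{v_0,t}-\eta_{v_0,s})>0$ for $t\ne s$) to obtain almost sure uniqueness of the argmin, while regularity of its law requires finer sample-path input on the fractional Brownian field (local nondeterminism, small-ball estimates, or a Malliavin-type analysis of the argmin). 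A more hands-on route to uniqueness: two distinct minimizers of $H^{\eta,\Lambda}$ would force the environment, restricted to the segment joining them, to lie strictly above its own chord by exactly the positive strictly-convex deficit of the elastic energy along that segment, contradicting the almost sure faster-than-linear oscillation of fractional Brownian motion near an endpoint; the delicate point there is to make this uniform over the random segment.
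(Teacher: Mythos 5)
Your existence argument is sound and is genuinely softer than the paper's: the paper proves existence via a quantitative discretization and union bound (Lemma~\ref{lem:conc_lem} and Corollary~\ref{cor:maxphi}), chaining over integer-valued surfaces with bounded Dirichlet energy and using sub-Gaussian bounds on cell-wise infima of the field; this does more work than needed for existence alone, but the payoff is the a priori tail bound~\eqref{eq:bound135}, which is used again in Section~\ref{sec:loc}. Your Weierstrass route (continuity plus coercivity from positive definiteness of the Dirichlet form and sublinear-in-energy growth of the field) proves existence but not the quantitative bound. One small correction: the almost sure bound $|\eta_{v,t}|\le C_v(1+\|t\|^{H})$ is false (by the law of the iterated logarithm the ratio $|\eta_{v,t}|/\|t\|^H$ is a.s.\ unbounded); you need $|\eta_{v,t}|\le C_v(1+\|t\|^{H}\sqrt{\log(2+\|t\|)})$ or simply $C_v(1+\|t\|^{H'})$ for some $H<H'<1$, which still yields coercivity since $H'<2$.

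The uniqueness argument has a genuine gap, which you correctly flag but do not close: step (ii) of your induction requires that, conditionally on $(\eta_v)_{v\in\Lambda'}$, the law of the argmin $x^\ast$ of $G+\eta_{v_0,\cdot}$ be absolutely continuous with respect to Lebesgue measure on $\R^n$, so that $x^\ast$ avoids the Lebesgue-null set $B'$ of bad boundary values. Almost sure uniqueness of the argmin (your step (i)) does follow from standard results for continuous Gaussian processes with nondegenerate increments, but absolute continuity of the argmin's law is a substantially harder statement for general $H\in(0,1)$ and $n\ge1$, and nothing in your sketch supplies it. Your fallback ``hands-on route'' also does not work as stated: two distinct minimizers force the environment along the joining segment to lie \emph{at least} (not exactly) a parabola $a\,t(1-t)$ above its chord, and for a fixed segment this is a positive-probability event for a continuous Gaussian process, so no contradiction arises without a further union/measurability argument over random segments. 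The paper's proof is structured precisely to avoid any regularity of the argmin's law: for each vertex $v$, coordinate $i$ and rationals $q_1<q_2$ it compares the restricted ground energies $\GE_-(v,i,q_1)$ and $\GE_+(v,i,q_2)$, notes that the first is measurable with respect to the disorder off the half-space $\{t_i>q_1\}$ at $v$ while the second has an atomless conditional law given that sigma-algebra, and concludes $\P(\GE_-(v,i,q_1)=\GE_+(v,i,q_2))=0$; a countable union over $v,i,q_1,q_2$ then rules out two distinct minimizers. Only atomlessness of a conditional ground-energy distribution is needed there, not a density for an argmin. To complete your proof you would either need to establish the absolute-continuity lemma (nontrivial; it would require local nondeterminism or Malliavin-type input on fractional Brownian fields) or restructure the uniqueness step along the lines of the paper's half-space conditioning.
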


\subsubsection{The dependence on the boundary values}
There is a simple relation between the distribution of the minimal surface under general boundary values $\tau$ and the distribution under zero boundary values. The specific form of the Hamiltonian~\eqref{eq:formal Hamiltonian} together with the stationary increments property~\eqref{eq:distribution of shifted disorder} of fractional Brownian disorder yield that for each $\tau$,
\begin{equation}\label{eq:distribution of surface with general boundary values}
    \varphi^ {\eta,\Lambda,\tau}\eqd \varphi^ {\eta,\Lambda}+\bar{\tau}^\Lambda
\end{equation}
where $\bar{\tau}^\Lambda$ is the \emph{harmonic extension} of $\tau$ to $\Lambda$. This property, as well as its extension to the ground energy distribution, are discussed in Section~\ref{subsec:main}, in relation with the main identity presented there. Consequently, for many properties of interest, one may restrict attention to zero boundary values.

\subsection{Main results}\label{sec:main results}
We proceed to state the main results of this work, which concern the fluctuations of the height and energy of the minimal surface. The results apply to all codimensions $n\ge 1$ and all Hurst parameters $H\in(0,1)$.

Our surfaces are defined on
\begin{equation}
	\Lambda_L\coloneqq\{-L,\dots,L\}^d
\end{equation}
with zero boundary values. We write $\std(X)$ and $\med(X)$ for the standard deviation and median, respectively, of a random variable $X$. We write $|A|$ for the cardinality of a finite set~$A$.

\subsubsection{Low dimensions ($d<4$)}

Our first theorem shows that the typical heights and the minimal energy fluctuations of minimal surfaces in fractional Brownian disorder in dimensions $d<4$ are indeed described by power laws with the exponent formulas~\eqref{eq:exponent predictions}. 
Moreover, the theorem identifies the exponents governing the stretched exponential tail decay of the height and minimal energy distributions.

\begin{theorem}\label{thm:main 123} Let $d\in\{1,2,3\}$. There are $C,c>0$, depending only on the Hurst parameter $H$ and the codimension $n$, such that the following holds for all $t>0$ and integer $L\ge 1$:
\begin{enumerate}
    \item Height:
\begin{equation}\label{eq:123 height fluctuations thm}
    ce^{-Ct^{4-2H}} \le \mathbb P \Big( \max _{v\in \Lambda _L} \|\varphi ^{\eta,\Lambda_L} _v\| \ge tL^{\frac{4-d}{4-2H}} \Big) \le Ce^{-ct^{4-2H}}.
\end{equation}
    \item Minimal energy fluctuations:
    \begin{equation}\label{eq:bound height 123}
        c L^{\frac{4-d}{2-H}+d-2}\le \std(\GE^{\eta,\Lambda_L})\le C L^{\frac{4-d}{2-H}+d-2}.
    \end{equation}
    and, moreover,
    \begin{equation}\label{eq:lowerbound26}
        ce^{-Ct^{2-H}} \le \mathbb P \big(  |\GE ^{\eta,\Lambda_L} -\mathbb E [\GE^{\eta,\Lambda_L} ]| \ge tL^{\frac{4-d}{2-H} + d-2} \big) \le Ce^{-ct^{2-H}}.
    \end{equation}
\end{enumerate}
\end{theorem}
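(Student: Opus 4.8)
The plan is to establish matching upper and lower bounds for the height and for the energy, and to show that the two scaling relations \eqref{eq:scaling relation} and \eqref{eq:second scaling relation} — which uniquely pin down $(\xi,\chi)$ — are each realized as an honest inequality in both directions. The first scaling relation \eqref{eq:scaling relation}, $\chi = 2\xi + d-2$, is the ``elastic'' relation: it comes from comparing the ground energy on $\Lambda_L$ with the energy of a competitor surface, and from the quadratic form of the Dirichlet energy. The second, \eqref{eq:second scaling relation}, $\chi = H\xi + d/2$, is the ``disorder'' relation: it comes from the fact that along the ground configuration the accumulated disorder $\lambda\sum_{v\in\Lambda}\eta_{v,\varphi_v}$ behaves, via the fractional-Brownian scaling \eqref{eq:fBM self similarity} and the independence across $v$, like a sum of $|\Lambda_L| \asymp L^d$ independent pieces each of typical size $\|\varphi\|^{H}\asymp (L^\xi)^H$, giving fluctuations of order $L^{d/2}L^{H\xi}$. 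Combined with a concentration input this forces $\chi = H\xi + d/2$. Taking the two relations together yields the explicit formulas \eqref{eq:exponent predictions}.

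Concretely, I would proceed as follows. \textbf{Step 1 (Gaussian concentration for the energy given the height scale).} Condition on the random environment only through a fixed surface, or better, use that $\GE^{\eta,\Lambda_L}$ is a (Lipschitz-type) functional of the Gaussian field $\eta$; apply the Gaussian concentration inequality. The Lipschitz constant is controlled by the variance of the disorder seen along the optimal surface, which in turn is controlled by the maximal height via \eqref{eq:covariance by variance}: $\var(\eta_{v,\varphi_v}) \le \|\varphi_v\|^{2H} \le (\max_v\|\varphi_v\|)^{2H}$. This is what converts a bound on the height tail (with stretched-exponential exponent $4-2H$) into the energy tail (with exponent $2-H$): if $\|\varphi\|\lesssim tL^\xi$ then the energy deviation is $\lesssim t^H L^{H\xi} L^{d/2}\cdot(\text{Gaussian tail})$, and matching the powers of $t$ gives $e^{-ct^{2-H}}$ from $e^{-ct^{4-2H}}$. \textbf{Step 2 (Upper bound on the height).} Use a competitor argument: the zero surface has energy $\lambda\sum_v \eta_{v,0}=0$, so $\GE \le 0$, while if the ground configuration reaches height $h$ on a macroscopic fraction of $\Lambda_L$ it pays Dirichlet energy $\gtrsim h^2 L^{d-2}$ and can gain at most $\lesssim h^H L^{d/2}$ (with high probability, by a chaining/maximal-inequality bound on $\sup$ over admissible surfaces of the disorder sum — this is where the entropy of the space of surfaces enters). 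Balancing $h^2 L^{d-2} \asymp h^H L^{d/2}$ gives $h \asymp L^{(4-d)/(2(2-H))} = L^\xi$ and feeds the stretched-exponential tail. \textbf{Step 3 (Lower bounds).} For the height lower bound, exhibit with positive probability a favorable environment on a sub-box that drags the surface to height $tL^\xi$; for the energy lower bound, use anti-concentration — e.g. the ground energy restricted to disjoint sub-boxes is a sum of independent contributions, each with fluctuations of the right order, so the total has standard deviation $\gtrsim L^{d/2}L^{H\xi} = L^\chi$, and a Paley–Zygmund / Gaussian lower-tail argument gives the stretched-exponential lower bound \eqref{eq:lowerbound26}.

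\textbf{Main obstacle.} I expect the hard part to be the upper bound on the height, specifically controlling $\sup_{\varphi \in \Omega^{\Lambda_L,0},\ \|\varphi\|_\infty\le h} \big|\sum_{v\in\Lambda_L}\eta_{v,\varphi_v}\big|$ uniformly over the (infinite-dimensional, continuously-parametrized) family of candidate surfaces. A naive union bound over an $\varepsilon$-net of surfaces is too lossy in dimensions $d=2,3$, so one needs a multiscale/chaining scheme that exploits the $H$-Hölder regularity of fractional Brownian fields together with the fact that low-Dirichlet-energy surfaces cannot oscillate too wildly — essentially a simultaneous control of the modulus of continuity of $\eta$ and of the admissible $\varphi$'s. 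This is presumably where the restriction $d\le 3$ (so that $\xi>1$, $\chi>0$, and the relevant series converge) becomes essential, and it is the step where the harmonic structure of the model (used, e.g., in \eqref{eq:distribution of surface with general boundary values}) should be leveraged to reduce the effective complexity of the surface space, perhaps via the main identity referenced in Section~\ref{subsec:main}. The lower bounds and the Gaussian-concentration step are comparatively routine given the scaling heuristics.
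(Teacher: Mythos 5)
Your Step 1 is essentially the paper's argument: Proposition~\ref{as:conc} applies Borell--TIS to the Gaussian process $\varphi\mapsto H^{\eta,\Lambda}(\varphi)$ restricted to surfaces with controlled $\ell^{2H}$ norm, and the exponent bookkeeping $e^{-ct^{4-2H}}\rightsquigarrow e^{-ct^{2-H}}$ is exactly as you describe. The problem is that your Steps 2 and 3 leave the two genuinely hard components unresolved. For Step 2, the obstacle you flag is fatal to the route you propose: the supremum of the disorder gain over competitor surfaces at height $\le h$ with Dirichlet energy $\lesssim h^2L^{d-2}$ is \emph{not} of order $h^HL^{d/2}$. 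The entropy of that class is extensive (of order $e^{ck|\Lambda|}$, cf.\ the count \eqref{eq:contomegak}), so a union/chaining bound over competitors yields a gain of order $h^{2H}L^{d}$, and balancing against $h^2L^{d-2}$ gives only $h\asymp L^{1/(1-H)}$ --- which is precisely the crude a priori bound of Corollary~\ref{cor:maxphi}, used in the paper only to terminate a recursion, not the sharp exponent $\xi$. The paper obtains the sharp bound by an entirely different mechanism that incurs \emph{no} entropy cost: the main identity (Proposition~\ref{prop:main identity}) is tested against deterministic shifts $s_j$ built from dyadic differences of Green's functions centered at $v$ (Lemma~\ref{lem:fluctuation and concentration}, Claim~\ref{claim:D(s)}), converting the height $\varphi^\eta_v\cdot e=\sum_j(\varphi^\eta,-\Delta s_j)$ into a comparison of identically distributed ground energies plus a single one-dimensional Gaussian; this yields recursive inequalities for $q$ and $p$ (Lemmas~\ref{lem:loc2}, \ref{lem:loc}, Corollary~\ref{cor:p}) that bootstrap from the crude bound. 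Note also that \eqref{eq:123 height fluctuations thm} concerns the \emph{maximum} over $\Lambda_L$, and a single spike of height $h$ costs far less than $h^2L^{d-2}$ in Dirichlet energy, so a global energy balance cannot control the max; the paper needs the H\"older-type estimate of Lemma~\ref{lem:holder} and a chaining over dyadic vertex pairs.

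For Step 3, the height lower bound is in the right spirit (a favorable environment forces delocalization), but the paper's implementation is nontrivial: one conditions on the observable $\hat\eta_\Lambda=\sum_v(\eta_{v,\alpha he_1}+\eta_{v,-\alpha he_1})$ being very negative and uses a resampling coupling together with the deterministic Proposition~\ref{prop:Deloc2} to show the \emph{minimizer} (not merely some competitor) must leave the set $\Pi$ of localized surfaces. Your energy anti-concentration mechanism, however, is incorrect as stated: $\GE^{\eta,\Lambda_L}$ is not a sum of independent contributions over disjoint sub-boxes (the minimizer is global, and restricting it to a sub-box does not decouple it from the disorder elsewhere), so there is no sum of independent pieces to which Paley--Zygmund applies. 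The paper instead resamples the single Gaussian functional $\hat\eta$ while freezing the orthogonal component $\eta^\perp$ (Lemma~\ref{lem:noise}), and shows --- using the delocalization result as input --- that $\GE$ responds by at least $c|\Lambda|^{1/2}h^H$ (Theorem~\ref{thm:GEAC}) or, for the stretched-exponential lower tail in \eqref{eq:lowerbound26}, by $ch^2L^{d-2}$ on the event $\{\hat\eta\le -Ch^2L^{d-2}\}$ with $h=\sqrt{t}L^{\xi}$, whose probability is exactly $e^{-Ct^{2-H}}$. Without this resampling idea, neither the variance lower bound in \eqref{eq:bound height 123} nor the correct lower-tail exponent $2-H$ in \eqref{eq:lowerbound26} follows from your outline.
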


We make two remarks regarding the theorem.

First, the estimate~\eqref{eq:123 height fluctuations thm} gives a lower bound on the probability that the \emph{maximum} of the minimal surface delocalizes. In fact, we establish the stronger fact that a \emph{constant fraction} of the minimal surface delocalizes with the same probability bound. Precisely, in the notation of the theorem, we show that
\begin{equation}\label{eq:constant fraction delocalization}
    \mathbb P \left(  \big|\big\{v\in\Lambda_L:\|\varphi_v^{\eta,\Lambda_L}\|\ge tL^{\frac{4-d}{4-2H}} \big\} \big| \ge c |\Lambda _L| \right) \ge c e^{-Ct^{2-H}}.
\end{equation}

Second, the constants appearing in the lower bounds of the theorem can be chosen to be independent of $n$.

\smallskip
Lastly, just prior to the first appearance of this paper, Otto, Palmieri and Wagner published the work~\cite{otto2025minimizing} which studies the model~\eqref{eq:formal Hamiltonian} in the $1+1$-dimensional case with Brownian disorder. The main focus of~\cite{otto2025minimizing} is different from ours, presenting a study of the elastic and disorder energies of the minimal surface (rather than the minimal energy fluctuations) and discovering the appearance of logarithmic corrections to naive scaling. There is, however, some overlap with our results: a consequence of~\cite[Proposition 2]{otto2025minimizing} is that the \emph{maximal height} of the minimal surface has order at most $L$, and that the probability that it exceeds $tL$ is at most $C\exp(-c_s t^s)$ for any $s<3$. In addition, in~\cite[Corollary 2]{otto2025minimizing}, the probability that the height at a \emph{given vertex} exceeds $tL$ is shown to be at most $C\exp(-ct^3)$. These results partially overlap the $d=n=1$ and $H=1/2$ case of the upper bound in~\eqref{eq:123 height fluctuations thm}. 

\subsubsection{Critical dimension ($d=4$)}

Our next result establishes $d=4$ as the critical dimension, in which the minimal surface delocalizes, but only at a sub-power-law rate. 

\begin{theorem}\label{thm: main 4} Let $d=4$. There are $C,c>0$, depending only on the Hurst parameter $H$ and the codimension $n$, such that the following holds for all $t>0$ and integer $L\ge 2$:
\begin{enumerate}
    \item Height: On the one hand, for all $v\in \Lambda _L$,
    \begin{equation}\label{eq:bound h d=4}
     \mathbb P \big( \|\varphi ^{\eta,\Lambda_L} _v\| \ge t(\log L)^{\frac{5}{4-2H}} \big) \le Ce^{-ct^{4-2H}}
    \end{equation}
    and on the other hand,
    \begin{equation}\label{eq:fluctuation height d=4}
      \E\bigg[\frac{1}{|\Lambda_L|}\Big| \Big\{u\in\Lambda_L:\|\varphi_u^{\eta,\Lambda_L}\|\ge c (\log \log L)^{\frac{1}{4-2H}}\Big\} \Big| \bigg]\ge c.
    \end{equation}
    \item Minimal energy fluctuations:
    \begin{equation}\label{eq:27}
      c L^2(\log \log L)^{\frac{H}{4-2H}}\le \std(\GE^{\eta,\Lambda_L})\le C L^{2} (\log L)^{\frac{5H}{4-2H}}
    \end{equation}
    and, moreover,
    \begin{equation}\label{eq:boundGE4}
       \mathbb P \big(  |\GE ^{\eta,\Lambda_L} -\mathbb E [\GE^{\eta,\Lambda_L} ]| \ge tL^{2} (\log L)^{\frac{5H}{4-2H}} \big) \le Ce^{-ct^{2-H}}.
    \end{equation}
\end{enumerate}
\end{theorem}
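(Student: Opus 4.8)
The plan is to regard the four displays of Theorem~\ref{thm: main 4} as the critical-dimension versions of Theorem~\ref{thm:main 123}, in which the power laws $L^{\xi}$ and $L^{\chi}$ of~\eqref{eq:exponent predictions} degenerate --- at $d=4$ one has $\xi=0$ and $\chi=d/2=2$ --- and are replaced by logarithmic corrections; note that the log-exponents appearing in~\eqref{eq:bound h d=4} and~\eqref{eq:27}, namely $5/(4-2H)$ for the height and $5H/(4-2H)=H\cdot 5/(4-2H)$ for the energy, already obey the scaling relation~\eqref{eq:second scaling relation} read at the level of the logarithm, with the $L^{2}$ prefactor playing the role of $L^{d/2}$, and that the stretched-exponential exponents $4-2H$ and $2-H$ are the same as in the low-dimensional theorem. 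The analytic backbone is the envelope identity for the ground energy: for a perturbation of the disorder in the Cameron--Martin directions of the fractional Brownian field, $\eta+\eps h$ with $h=(h_v)_{v}$, the derivative at $\eps=0$ of $\eps\mapsto\GE^{\eta+\eps h,\Lambda_L}$ equals $\sum_{v\in\Lambda_L}h_v(\varphi^{\eta,\Lambda_L}_v)$; since~\eqref{eq:covariance} gives kernel value $\|t\|^{2H}$ on the diagonal, the reproducing property yields $|h_v(t)|\le\|h_v\|\,\|t\|^{H}$, so the Cameron--Martin gradient of $\GE$ has squared norm at most $\sum_{v\in\Lambda_L}\|\varphi^{\eta,\Lambda_L}_v\|^{2H}$. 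Hence any control of the heights of the minimal surface transfers, via Gaussian concentration, to control of the fluctuations of $\GE$.

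The first and central step is the a priori height bound~\eqref{eq:bound h d=4}, which I would establish by a dyadic multi-scale argument around $v$. If $\|\varphi^{\eta,\Lambda_L}_v\|\ge M$, then on each dyadic annulus around $v$ --- at scales $2^{k}$ for $k=0,\dots,\lfloor\log_{2}L\rfloor$ --- the surface must pay a Dirichlet (capacity) cost of order $2^{k(d-2)}h^{2}$ to maintain an excess height $\asymp h$, and this can only be recouped from the most favorable fluctuation of the fractional Brownian disorder over that annulus within the corresponding height window, which by~\eqref{eq:covariance by variance} has size $\asymp 2^{kd/2}h^{H}$ with Gaussian-type upper tails. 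In the critical dimension $d=4$ both terms scale as $2^{2k}$, so each of the $\Theta(\log L)$ scales contributes comparably; assembling the per-scale constraints yields the height scale $(\log L)^{5/(4-2H)}$, while exceeding it by a factor $t$ forces an atypically favorable disorder over the scales, producing the stretched-exponential tail $e^{-ct^{4-2H}}$. A union bound over $v\in\Lambda_L$, together with a version that controls the number of vertices reaching a given height, then bounds $\max_{v}\|\varphi^{\eta,\Lambda_L}_v\|$ and $\sum_{v}\|\varphi^{\eta,\Lambda_L}_v\|^{2H}$.

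Granting this, the upper bounds in~\eqref{eq:27} and~\eqref{eq:boundGE4} follow by Gaussian concentration combined with the height tail: on $\{\max_v\|\varphi^{\eta,\Lambda_L}_v\|\le M\}$ the functional $\GE$ is $(|\Lambda_L|^{1/2}M^{H})$-Lipschitz in Cameron--Martin norm, so a truncation argument gives $\P(|\GE-\E[\GE]|\ge s,\ \max_v\|\varphi_v\|\le M)\le 2\exp(-cs^{2}/(L^{4}M^{2H}))$, and optimizing $M$ against~\eqref{eq:bound h d=4} --- the optimum being $M\asymp t^{1/2}(\log L)^{5/(4-2H)}$ for $s=tL^{2}(\log L)^{5H/(4-2H)}$ --- makes the logarithmic factors cancel and yields the tail $e^{-ct^{2-H}}$, hence the standard-deviation upper bound; effectively this is the multi-scale decomposition of the previous paragraph read at the level of the energy. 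For the lower bound $\std(\GE)\ge cL^{2}(\log\log L)^{H/(4-2H)}$ I would use a resampling argument: since the disorder energy $\sum_{v}\eta_{v,\varphi_v}$ responds, over a resampling of $\eta$ in a small block around $v$, by an amount governed by the fractional Brownian oscillation of $\eta_{v,\cdot}$ over the range of heights the minimizer explores there, summing the resulting conditional variances via a Doob-martingale / Efron--Stein lower bound gives $\var(\GE)\gtrsim\E\big[\sum_{v}\|\varphi^{\eta,\Lambda_L}_v\|^{2H}\big]$, which by the delocalization bound~\eqref{eq:fluctuation height d=4} is $\gtrsim|\Lambda_L|(\log\log L)^{2H/(4-2H)}$.

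Finally, the delocalization lower bound~\eqref{eq:fluctuation height d=4} I would obtain from a mesoscopic energy comparison. Fix a block of side $\ell$, a slowly growing function of $L$ (e.g.\ a fixed power of $\log L$), tile $\Lambda_L$ by its translates, and show that for each block, with probability bounded below, the minimal energy restricted to the block is so negative --- because the fractional Brownian disorder admits a hierarchical near-optimizer, constant on sub-boxes of each dyadic scale up to height $\asymp(\log\ell)^{1/(4-2H)}$, whose accumulated disorder gain outweighs its accumulated Dirichlet cost (using once more that at $d=4$ the two balance scale by scale) --- that the minimizer cannot stay below $c(\log\ell)^{1/(4-2H)}$ on more than a small fraction of the block; averaging over the tiling and using $\log\ell\asymp\log\log L$ gives the stated bound on the expected fraction of high vertices, the choice of $\ell$ polylogarithmic (rather than $\ell=L$, which would heuristically yield the stronger scale $(\log L)^{1/(4-2H)}$) being dictated by keeping the per-block success probability bounded below. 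I expect the multi-scale height estimate at criticality to be the main obstacle --- carrying out the scale-by-scale balance rigorously, with uniform control of the errors over $\Theta(\log L)$ scales and with the correct power of $\log L$ and the correctly interpolating tail, is considerably more delicate than in the low-dimensional regime where a single scale dominates --- with the construction and control of the hierarchical mesoscopic near-minimizer a secondary difficulty.
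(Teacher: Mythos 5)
Your proposal correctly identifies the targets and the role of Gaussian concentration for the ground-energy upper bounds (that part matches the paper: Proposition~\ref{as:conc} plus an averaging claim for $\sum_v\|\varphi_v\|^{2H}$, then optimizing the truncation level). But the two hardest pieces have genuine gaps. For the delocalization lower bound~\eqref{eq:fluctuation height d=4}, your single mesoscopic scale $\ell=\mathrm{polylog}(L)$ with ``per-block success probability bounded below'' cannot work: in $d=4$ the relevant large-deviation event for a block of side $\ell$ asks a Gaussian observable of standard deviation $\asymp \ell^{2}h^{H}$ to be below $-C\ell^{2}h^{2}$, so its probability is $\exp(-ch^{4-2H})$ \emph{independently of $\ell$}; for $h\asymp(\log\log L)^{1/(4-2H)}$ this is $(\log L)^{-c}$, which tends to $0$, so no choice of $\ell$ makes the per-block probability uniformly positive. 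This scale-invariance is exactly why the paper runs a fractal-percolation construction over $\asymp\sqrt{\log L}$ well-separated scales, each succeeding independently with probability $(\log L)^{-1/3}$, so that some scale succeeds above a given vertex with probability $\ge 1/2$. Your alternative ``hierarchical near-optimizer'' inside one block is a different mechanism, but constructing a low-energy competitor does not by itself delocalize the \emph{minimizer}; one still needs a comparison with the infimum over localized surfaces, which is what the paper's Proposition~\ref{prop:Deloc2} (with the $\Delta_1+\Delta_2+\Delta_3$ decomposition and the resampling of $\hat\eta_\Lambda$) is engineered to provide.

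For the height upper bound~\eqref{eq:bound h d=4}, your per-annulus ``Dirichlet cost vs.\ best disorder fluctuation'' balance is the right scaling heuristic but not a proof: controlling ``the most favorable fluctuation of the disorder over that annulus within the height window'' is a supremum over an uncountable family of surfaces, and nothing in your sketch handles it. The paper instead bounds the linear functionals $(\varphi^{\eta},-\Delta s_j)$ for Green's-function shift increments $s_j$ via a concentration lemma that is itself conditional on height control at larger heights, yielding a recursion for $q(r)=\max_v\P(\|\varphi_v\|\ge r)$ that is closed with an a priori bound; the exponent $5=4+1$ arises from splitting $r$ evenly over the $m\asymp\log L$ scales (contributing $m^4$ inside the exponential) plus the need to beat the $L^{d}$ prefactor in the recursion. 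Your heuristic would more naturally produce the conjectured sharp order $(\log L)^{1/(4-2H)}$; asserting the $5/(4-2H)$ power without this recursive mechanism is reverse-engineering the answer. Finally, for the variance lower bound, ``Efron--Stein lower bound'' is not a valid tool (Efron--Stein bounds the variance from \emph{above}); the paper's Theorem~\ref{thm:GEAC} instead resamples the single observable $\hat\eta$ and exploits that, on the event that both resampled minimizers are delocalized, $\GE$ responds monotonically to $\hat\eta$ by at least $c|\Lambda^0|^{1/2}h^{H}$ — the point being precisely to rule out the minimizer readjusting so as to absorb the perturbation.
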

Again, the constants in the lower bounds of these estimates can be chosen independent of~$n$.

\smallskip
The theorem does not capture the precise order of the typical height, showing only that it lies between $(\log\log L)^{\frac{1}{4-2H}}$ and $(\log L)^{\frac{5}{4-2H}}$. Nevertheless, we are able to show that a scaling relation still holds at the critical dimension, relating the typical height and minimal energy fluctuations. Specifically, we find that a version of the scaling relation~\eqref{eq:second scaling relation} holds at the critical dimension $d=4$, in the sense that if the typical height of the minimal surface is of order~$h$ then the typical fluctuations of the minimal energy are of order $L^2 h^H$. Let us state this precisely: We measure the fluctuations of the minimal energy using the function
\begin{equation}
    f(E):=\P\left(\left|\GE^{\eta,\Lambda_L} -\med(\GE^{\eta,\Lambda_L}) \right|\ge E\right).
\end{equation}
The typical height is measured using two notions: The first is a height that a fraction of the minimal surface reaches with uniformly positive probability, defined by
\begin{equation}
    h_- := \sup\left\{h\colon \mathbb E \big[ | \{v\in \Lambda_L : \|\varphi^{\eta,\Lambda_L}_v\| \ge h \} | \big] \ge \frac{1}{2}|\Lambda_L|\right\}.
\end{equation}
The second is a quantile of the normalized $\ell^{2H}$ norm of the minimal surface, defined by
\begin{equation}
    h_+ := \inf\Bigg\{h\colon \P\Bigg(\Big(\frac{1}{|\Lambda_L|}\sum _{v\in \Lambda_L} \|\varphi^{\eta,\Lambda_L}_v\|^{2H}\Big)^{\frac{1}{2H}}\ge h\Bigg)\le \frac{1}{3}\Bigg\}.
\end{equation}
It seems reasonable, but is not proved here, that these two heights are of comparable size. We show that the typical fluctuations of the minimal energy are at least of order $L^2 (h_-)^H$ and at most of order $L^2 (h_+)^H$. Precisely, for $C,c,\eps>0$ depending only on $H$, we have that
\begin{equation}\label{eq:second scaling relation for d=4}
    f(cL^2(h_-)^H) \ge \eps\quad\text{and}\quad
    f(CL^2(h_+)^H) \le 1-\eps.
\end{equation}
The lower bound is a consequence of Theorem~\ref{thm:GEAC}, while the upper bound follows from Proposition~\ref{as:conc}.

We remark that Theorem~\ref{thm:GEAC} and Proposition~\ref{as:conc} contain additional information and are also used in our analyses of other dimensions, where our results are more precise.

A second remark is that, with the above interpretation of the relations, it is not possible for both scaling relations,~\eqref{eq:scaling relation} and~\eqref{eq:second scaling relation}, to hold in dimension $d=4$, as the relation~\eqref{eq:scaling relation} would say that the minimal energy fluctuations relate to the typical height $h$ as $L^2 h^2$, while the relation~\eqref{eq:second scaling relation} would say that they relate as $L^2 h^H$ (and $h\to\infty$ as $L\to\infty$ by Theorem~\ref{thm: main 4}).

\subsubsection{High dimensions ($d>4$)}

Our final result shows that the minimal surfaces are localized in dimensions $d>4$, with minimal energy fluctuation exponent $\chi=\frac{d}{2}$. We further identify the exponent in the stretched exponential tail decay of the height distribution.

\begin{theorem}\label{thm:main d>4} Let $d\ge5$. There are $C,c>0$, depending only on the dimension $d$, the Hurst parameter $H$ and the codimension $n$, such that the following holds for all $t>0$ and integer $L\ge 2$:
\begin{enumerate}
    \item Height: for all $v\in \Lambda _L$,
    \begin{equation}\label{eq:loweboundheight d>4}
    ce^{-Ct^{4-2H}} \le \mathbb P \big( \|\varphi ^{\eta,\Lambda_L}_v\| \ge t \big) \le Ce^{-ct^{4-2H}}.
    \end{equation}
    \item Minimal energy fluctuations:
    \begin{equation}\label{eq:GE d>4}
     c L^{\frac{d}{2}}\le \std(\GE^{\eta,\Lambda_L})\le C L^{\frac{d}{2}}
    \end{equation}
    and, moreover,
    \begin{equation}\label{eq:boundGEd>4}
     \mathbb P \big(  |\GE ^{\eta,\Lambda_L} -\mathbb E [\GE^{\eta,\Lambda_L} ]| \ge tL^{d/2} \big) \le Ce^{-ct^{2-H}}.
    \end{equation}
\end{enumerate}
\end{theorem}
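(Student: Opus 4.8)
The plan is to exploit the fact that in high dimensions the elastic energy is strong enough to pin the surface: the discrete Green's function on $\Z^d$ for $d\ge 5$ is summable and bounded, so a single vertex interacts with the disorder only through an $O(1)$ amount of ``effective freedom.'' Concretely, I would first establish the \emph{height localization} claim~\eqref{eq:loweboundheight d>4}. For the \textbf{upper bound}, fix a vertex $v$ and compare the true ground configuration with the configuration obtained by forcing $\varphi_v=0$ and re-minimizing (a harmonic-type modification supported near $v$, or more cleanly the configuration that is harmonic off a small ball around $v$ with the optimal boundary data). The elastic cost of flattening a bump of height $h$ localized near $v$ is of order $h^2$ (by the $d\ge 5$ Green's function estimate, the minimal Dirichlet energy of a function equal to $h$ at $v$ and $0$ far away is $\asymp h^2$), while the disorder gain is at most $\lambda \sum_{u \text{ near }v}\bigl|\eta_{u,\varphi_u}-\eta_{u,0}\bigr|$, which is controlled by the fractional Brownian field: increments over scale $h$ have size of order $h^H$ times a Gaussian tail. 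Optimizing $h^2$ against a sum of $\sim\!1$ many $h^H$-scale Gaussian increments gives that $\|\varphi_v\|\ge t$ forces a net energy deficit, i.e.\ contradicts minimality, unless a Gaussian-type field exceeds $\sim t^{2-H}$, i.e.\ with probability $\le Ce^{-ct^{4-2H}}$ (the exponent $4-2H$ being exactly $(2-H)\cdot 2$ coming from squaring the relevant Gaussian deviation, as in the general mechanism $h^2\sim \text{(disorder)}$ with disorder $\sim h^H\cdot\mathrm{Gaussian}$). I expect this to follow by the same type of ``energy comparison plus concentration'' estimate already developed for the low-dimensional theorems (presumably encoded in a proposition like Theorem~\ref{thm:GEAC}); the main point is simply that for $d\ge 5$ the relevant capacity is $O(1)$ rather than growing with $L$. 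For the \textbf{lower bound} on the height, I would run the complementary first-moment/second-moment (or direct energy-gain) argument: with probability bounded below by $ce^{-Ct^{4-2H}}$, the disorder near $v$ has a well of depth $\gtrsim t^{2-H}$ at some height $\asymp t$, and moving the surface there is favorable; a local surgery argument shows the ground state indeed follows such a well with positive probability.

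Next I would prove the \textbf{minimal energy fluctuation} bounds~\eqref{eq:GE d>4} and~\eqref{eq:boundGEd>4}. The natural route is a martingale / Efron--Stein decomposition over the independent disorder columns $(\eta_{v,\cdot})_{v\in\Lambda_L}$. Enumerate the $|\Lambda_L|\asymp L^d$ vertices and write $\GE - \E\GE = \sum_v \Delta_v$, where $\Delta_v$ is the martingale increment from revealing $\eta_{v,\cdot}$. The key estimate is that each $\Delta_v$ has bounded (in fact $O(1)$, uniformly in $L$ and $v$) fluctuations with a stretched-exponential tail of exponent $2-H$: resampling a single column changes $\GE$ by an amount governed by how much the surface height at $v$ can move, which by the localization just proved is $O(1)$, and the disorder is a fractional Brownian field so the relevant energy change over an $O(1)$ height window is an $O(1)$-scale Gaussian-type quantity with tail $e^{-ct^{2-H}}$ (the $2-H$ exponent is inherited directly from $|B^H_t-B^H_s|$ having Gaussian tails while $\|t-s\|$ itself can be of order $t^{1/(2-H)}$ under the competing elastic cost — precisely the exponent appearing throughout the paper). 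Summing $L^d$ independent-ish increments each of variance $O(1)$ gives $\std(\GE)\asymp L^{d/2}$ for the upper bound, and the concentration inequality for sums of random variables with stretched-exponential$(2-H)$ tails (a standard generalized Bernstein/Bennett bound, or Lemma-type input from the paper) yields~\eqref{eq:boundGEd>4}. The lower bound $c L^{d/2}$ on the standard deviation I would get from a variance lower bound: the increments $\Delta_v$ for vertices $v$ in the bulk are not too degenerate — e.g.\ conditionally on all other columns, $\GE$ genuinely depends on $\eta_{v,\cdot}$ with $\var(\Delta_v)\ge c>0$ — and orthogonality of martingale increments gives $\var(\GE)=\sum_v\E[\Delta_v^2]\ge c L^d$. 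Alternatively one can lower-bound the variance by testing against the natural observable $\sum_v \eta_{v,0}$ or by an explicit two-point perturbation argument.

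The \textbf{main obstacle} I anticipate is the lower bound on the energy fluctuations, $\std(\GE^{\eta,\Lambda_L})\ge cL^{d/2}$. Upper bounds on fluctuations via martingale increments are soft once localization is in hand, but the lower bound requires showing genuine, non-cancelling dependence of $\GE$ on each (or a positive fraction) of the independent disorder columns, uniformly in $L$. The subtlety is that $\GE$ is a minimum, so the map $\eta_{v,\cdot}\mapsto \GE$ can be quite flat (when the surface avoids the resampled region). I would handle this by conditioning: reveal all columns except those in a sparse sub-lattice of spacing $O(1)$; on this event the ground configuration is essentially determined and its height at each withheld vertex $v$ lies in an $O(1)$ window with positive probability; then $\GE$ inherits an honest contribution $\lambda\,\eta_{v,\varphi_v}$ whose conditional variance is bounded below using the non-degeneracy of the fractional Brownian field (its increments over an $O(1)$ interval have variance $\ge c$). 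Orthogonality of the martingale differences (or a conditional-variance / Efron--Stein lower bound) then aggregates these $\gtrsim L^d$ contributions. The other mildly delicate point is making the height-localization ``energy surgery'' genuinely rigorous for a \emph{random, rough} environment (the fractional Brownian field is a.s.\ continuous but not Lipschitz); this is where I would lean on the modulus-of-continuity / Gaussian concentration estimates for $B^H$ together with the existence and uniqueness from Proposition~\ref{prop:existence}, rather than attempt pathwise comparison.
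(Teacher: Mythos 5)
Your heuristics identify the correct exponents and the correct reason $d\ge 5$ is subcritical, and your Efron--Stein/martingale route for the \emph{upper} bounds on the ground-energy fluctuations is a genuinely different (and viable) alternative to the paper's argument, which instead conditions on the event that all heights are $O(1)$ and applies Borell--TIS to the Gaussian process $\varphi\mapsto H^{\eta,\Lambda}(\varphi)$ restricted to such surfaces (Proposition~\ref{as:conc} plus Claim~\ref{claim:average}). However, there are two genuine gaps in the height estimates, which are the load-bearing parts of the theorem.

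First, the height \emph{upper} bound cannot be obtained by a single local surgery at $v$. The harmonic ``flattening'' of a bump of height $h$ at $v$ is the shift $s\propto h\,G^v_{\Lambda_L}$, whose support is all of $\Lambda_L$, and the disorder energy change $\sum_u(\eta_{u,\varphi_u-s_u}-\eta_{u,\varphi_u})$ must be evaluated at the random, $\eta$-dependent heights $\varphi_u$ of \emph{every} vertex. The paper's mechanism for handling this (Lemma~\ref{lem:fluctuation and concentration}) produces a bound governed by $D(s)=\|\nabla s\|^{4-4H}\|s\|_{2H}^{2H}+\|\nabla s\|^4|\supp(s)|$, and for the single-scale shift the term $\|\nabla s\|^4|\supp(s)|\asymp h^4L^d$ destroys the estimate; moreover $\|s\|_{2H}^{2H}$ diverges for small $H$ even in $d\ge5$. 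The paper therefore decomposes $G^v$ dyadically into increments $s_j=(G^v_j-G^v_{j-1})e$ with $D(s_j)\lesssim 2^{j(4-d)}$ (summable precisely when $d\ge5$), and even then obtains only a \emph{recursive} inequality $q(r)\le Ce^{-cr^{4-2H}}+C\sum_j 2^{jd}q(2^{j/8}r)$, closed by iterating together with the crude a priori bound of Corollary~\ref{cor:maxphi}. This multiscale-plus-recursion structure is a missing idea, not a missing detail. Second, for the height \emph{lower} bound, ``a local surgery argument shows the ground state indeed follows such a well'' is exactly the hard step: the existence of a favorable well does not imply the global minimizer uses it. The paper's Theorem~\ref{theorem:83-1} resolves this by resampling a one-dimensional Gaussian functional $\hat\eta$ of the column at $v$, comparing restricted ground energies over localized and shifted configuration classes via Proposition~\ref{prop:Deloc2}, and controlling the cross term by Markov's inequality; none of this is a routine surgery.

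Two smaller points: the observable $\sum_v\eta_{v,0}$ is identically zero (the field is normalized by $B^H_0=0$), so that alternative for the variance lower bound is vacuous; and your conditional-variance lower bound needs the minimizer's height at $v$ to be bounded \emph{away from} $0$ with positive probability (not merely in an $O(1)$ window), which is precisely the height lower bound in~\eqref{eq:loweboundheight d>4} --- the paper makes this dependence explicit by feeding that bound into Theorem~\ref{thm:GEAC} with $h=1$.
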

Again, the constants in the lower bounds of these estimates can be chosen independent of~$n$.

\subsection{Predictions from the physics literature}\label{sec:predictions}

Grinstein--Ma~\cite{grinstein1982roughening,grinstein1983surface} and Villain~\cite{villain1982commensurate} predicted in 1982 that for Brownian disorder (the case $H=1/2$ and $n=1$, also called random-field disorder), the minimal surface will delocalize with roughness exponent $\xi = \frac{4-d}{3}$ in dimensions $d<4$, delocalize to a sub-power-law height in dimension $d=4$ and be localized in dimensions $d>4$.

Later authors, including Kardar~\cite{kardar1987domain}, Nattermann~\cite{nattermann1987interface}, Halpin-Healy~\cite{halpin1989diverse, halpin1990disorder}, M\'ezard--Parisi~\cite{mezard1990interfaces, mezard1991replica, mezard1992manifolds} and Balents--Fisher~\cite{balents1993large} (see also the reviews~\cite{nattermann1988random, forgacs1991behavior, wiese2003functional, giamarchi2009disordered,wiese2022theory}) extended this prediction to general codimensions $n$ and disorders with power-law correlations. Their methods include scaling, functional renormalization group and replica symmetry breaking ideas. They typically consider the case when the disorder $\eta$ is centered Gaussian, white in space (i.e., in our discrete space, $(\eta_{v,\cdot})$ being independent between the vertices $v$) and, for each~$v$, has covariance function satisfying
\begin{equation}\label{eq:disorder correlations in physics}
    \Cov(\eta_{v,t},\eta_{v,s}) = f(\|t-s\|) \sim \|t-s\|^{-\gamma},
\end{equation}
for large values of $\|t-s\|$, with some $\gamma\ge -2$. For $\gamma>0$ one supposes that $f(r)$ is smooth for small values of $r$. The formula~\eqref{eq:disorder correlations in physics} does not seem to make literal sense for $\gamma<0$, but should perhaps be interpreted in this case as $\var(\eta_{v,t}-\eta_{v,s})\sim \|t-s\|^{-\gamma}$, in which case it can accommodate fractional Brownian disorder by choosing
\begin{equation}\label{eq:gamma to H}
    \gamma=-2H.
\end{equation}
We proceed with this interpretation in mind, and parametrize in terms of $H$, using~\eqref{eq:gamma to H}, also for $\gamma\ge 0$. In dimensions $d<4$ and all codimensions $n$, it is predicted that the exponent formulas~\eqref{eq:exponent predictions}, also termed \emph{Flory exponents}, hold for disorders with sufficiently strong correlations (the long-range regime), i.e., for $\gamma\le \gamma_c$ for some threshold $\gamma_c=\gamma_c(d,n)$. For disorders with weaker correlations, i.e., for $\gamma\ge\gamma_c$, the exponent values should fixate on the value that they take at $\gamma=\gamma_c$ (the short-range regime, which includes the focus of our earlier work~\cite{dembin2024minimal}).

The references above also agree that minimal surfaces with such disorder, for all values of~$\gamma$, localize in dimensions $d>4$.

Theorem~\eqref{thm:main 123} rigorously confirms that the Flory exponents~\eqref{eq:exponent predictions} describe the height and minimal energy fluctuations of the minimal surface in dimensions $d<4$ and all codimensions $n$ in the case $-2<\gamma<0$, when this case is realized by a fractional Brownian disorder of Hurst parameter $H\in (0,1)$ (the case $\gamma=-2$ is also known to hold, as the linear disorder model is exactly solvable). The result implies that $\gamma_c(d,n)\ge 0$ for all $d<4$ and $n\ge 1$.

Theorem~\ref{thm:main d>4} rigorously confirms the localization of the surfaces in dimensions $d>4$ and all codimensions $n$ in the case $-2<\gamma<0$ (realized by a fractional Brownian disorder).

\smallskip
At the critical dimension $d=4$, Emig--Natterman (\cite{emig1998roughening} and ~\cite[following (78)]{emig1999disorder}) predict that minimal surfaces with Brownian disorder ($H=1/2$ and $n=1$) will delocalizae to height $(\log L)^{1/3}$. More generally, a common idea in field theory is that if a surface delocalizes to height $L^{(\alpha+o(1)) \eps}$, as $\eps\to0$, in dimensions $d = d_c - \eps$, then it will delocalize to height of order $(\log L)^\alpha$ at the critical dimension $d=d_c$. In our setup, in light of the exponent formulas~\eqref{eq:exponent predictions}, this predicts that the minimal surfaces with fractional Brownian disorder will delocalize to height $(\log L)^{\frac{1}{4-2H}}$ at the critical dimension $d=4$, independently of $n$. We are not aware of a prediction in the physics literature for the minimal energy fluctuations in dimension $d=4$.

Theorem~\ref{thm: main 4} rigorously confirms the delocalization of the minimal surface in dimension $d=4$ and all codimensions $n\ge 1$, to a height of sub-power-law order, for fractional Brownian disorder of Hurst parameter $0<H<1$. It does not, however, capture the predicted order $(\log L)^{\frac{1}{4-2H}}$ (known to hold in the linear disorder case~$H=1$). Our results further find that a scaling relation holds also at the critical dimension $d=4$, which leads to the, possibly new, prediction that the minimal energy fluctuations in dimension $d=4$ are of order $L^2(\log L)^{\frac{H}{4-2H}}$.

\subsection{Reader's guide}
In Section \ref{sec:main identity and consequences}, we establish the main identity and derive several consequences that will be crucial for later proofs. The proofs of the main theorems (Theorems \ref{thm:main 123}, \ref{thm: main 4}, and \ref{thm:main d>4}) are spread across two sections, as the methods for proving the lower and upper bounds differ. We have grouped the proofs according to the techniques used. In Section \ref{sec:loc}, we prove the upper bounds on height fluctuations (the upper bounds in \eqref{eq:123 height fluctuations thm}, \eqref{eq:bound h d=4}, and \eqref{eq:loweboundheight d>4}) and derive concentration estimates for the ground energy (see the upper bounds \eqref{eq:lowerbound26}, \eqref{eq:boundGE4}, and \eqref{eq:boundGEd>4}). In Section \ref{sec:deloc}, we prove the lower bounds on the height fluctuations (the lower bounds in~\eqref{eq:123 height fluctuations thm}, \eqref{eq:fluctuation height d=4}, and \eqref{eq:loweboundheight d>4}) and derive anti-concentration estimates for the ground energy (see the lower bounds \eqref{eq:lowerbound26}, \eqref{eq:27}, and \eqref{eq:GE d>4}). The remaining bounds follow directly from those established above. Finally, in Section \ref{sec:assumptions for eta white}, we prove the existence and uniqueness of a minimizer, as well as provide crude a priori bounds for the concentration of the ground energy (that will be used in Section \ref{sec:loc}).

\subsection{Acknowledgements}

We thank Felix Otto and Christian Wagner for stimulating discussions, at Princeton University in Fall 2023 and at a workshop in Budapest in Winter 2025.

The research of R.P. is partially supported by the Israel Science Foundation grants
1971/19 and 2340/23, and by the European Research Council Consolidator grant 101002733 (Transitions).

Part of this work was completed while R.P. was a Cynthia and Robert Hillas Founders' Circle Member of the Institute for Advanced Study and a visiting fellow at the Mathematics Department of Princeton University. R.P. is grateful for their support.

\section{Main identity and first consequences}\label{sec:main identity and consequences}
This section introduces the main identity, which is the special feature of the model~\eqref{eq:formal Hamiltonian} on which our analysis is based. We start with required notation, proceed to describe the identity and present some of its first consequences.

\subsection{Notation}
\subsubsection*{Shifts} Given a function $s:\Z^d\to\R^n$ we let
\begin{equation}
    \supp(s) := \{v\in\Z^d\colon s_v\neq 0\}
\end{equation}
be the \emph{support} of $s$. 

\subsubsection*{Inner products and Laplacian} For $x,y\in\R^n$ we use the standard notation
    \begin{equation}\label{eq:inner product R n}
        x\cdot y:=\sum_{i=1}^n x_i y_i
    \end{equation}
    so that $\|x\|^2=x\cdot x$. We use the notation $(\cdot ,\cdot )$ for inner products of functions on the vertices:  
    Given $\varphi,\psi:\Z^d\to\R^n$ and $\Lambda\subset\Z^d$, we let
    \begin{equation}
        (\varphi,\psi)_\Lambda:=\sum_{v\in\Lambda} \varphi_v \cdot  \psi_v
    \end{equation}
    and also set $\|\varphi\|_\Lambda^2:=(\varphi,\varphi)_\Lambda$.
    In addition, we write
    \begin{equation}
        (\nabla \varphi, \nabla\psi)_\Lambda:= \sum_{\substack{u\sim v\\\{u,v\}\cap\Lambda\neq\emptyset}}(\varphi_u - \varphi_v)\cdot ( \psi_u - \psi_v)
    \end{equation}
    and also set $\|\nabla\varphi\|_\Lambda^2:=(\nabla\varphi,\nabla\varphi)_\Lambda$ (the Dirichlet energy of $\varphi$ on $\Lambda$). We use the abbreviations $(\varphi,\psi):=(\varphi,\psi)_{\Z^d}$ and $(\nabla \varphi,\nabla\psi):=(\nabla \varphi,\nabla\psi)_{\Z^d}$.

    We note the following useful \emph{discrete Green's identity},
    \begin{equation}\label{eq:Green's identity}
        (\nabla\varphi,\nabla\psi)_\Lambda=(\varphi,-\Delta_\Lambda\psi)=(-\Delta_\Lambda\varphi,\psi),
    \end{equation}
    which holds whenever one of the sums (and then all others) converges absolutely. Here,
    $\Delta_\Lambda$ is the discrete Laplacian which acts on $\varphi:\Z^d\to\R^n$ by
    \begin{equation}
        (\Delta_\Lambda\varphi)_v:=\sum_{u\,:\, \substack{u\sim v,\\ \{u,v\}\cap\Lambda\neq\emptyset}} (\varphi_u - \varphi_v).
    \end{equation}
    In particular, letting
    \begin{equation}
        \Lambda^+:=\{v\in\Z^d\colon \exists u, u\sim v\text{ and }\{u,v\}\cap\Lambda\neq \emptyset\}
    \end{equation}
    we have $\Delta_\Lambda\varphi=0$ off $\Lambda^+$ with this definition, so that, e.g., $(-\Delta_\Lambda\varphi,\psi) = (-\Delta_\Lambda\varphi,\psi)_{\Lambda^+}$. Again, we abbreviate $\Delta:=\Delta_{\Z^d}$.

    \subsubsection*{Harmonic extension and Dirichlet energy} Given a finite $\Lambda\subset\Z^d$ and $\tau:\Z^d\to\R^n$, let $\overline{\tau}^\Lambda$ be the harmonic extension of $\tau$ to $\Lambda$. Precisely, $\overline{\tau}^\Lambda:\Z^d\to\R^n$ is the unique function satisfying
    \begin{equation}
    \begin{split}
        &\overline{\tau}^\Lambda = \tau\quad\text{on $\Z^d\setminus\Lambda$},\\
        &\Delta_\Lambda(\overline{\tau}^\Lambda) = 0\quad\text{on $\Lambda$}.
    \end{split}
    \end{equation}
    This is equivalent, as is well known, to setting $\bar{\tau}^\Lambda_v = \E(\tau_{X_{T_\Lambda}})$ with $(X_t)_{t=0}^\infty$ a simple random walk on $\Z^d$ started at $v$ and $T_\Lambda:=\min\{t\ge 0\colon X_t\notin\Lambda\}$. In addition, we let
    \begin{equation}
    \|\tau\|_{\DE(\Lambda)}^2:=\|\nabla\bar{\tau}^\Lambda\|_\Lambda^2
    \end{equation}
    be the Dirichlet energy of the harmonic extension of $\tau$ to $\Lambda$.

\subsection{Main identity}\label{subsec:main} The starting point for our analysis of the disordered random surface model~\eqref{eq:formal Hamiltonian} is the following deterministic identity, which controls the change in the energy of surfaces under a shift of both the disorder (in the sense of~\eqref{eq:eta s def}) and the surface.

Our earlier work also relied on a deterministic identity~\cite[Proposition 2.1]{dembin2024minimal}. The identity here takes a more complicated form due to the more complicated definition~\eqref{eq:eta s def} of the shifted and recentered disorder $\eta^s$. The latter is required in order to work with disorders which only have stationary increments (rather than being stationary).

\begin{prop}[Main identity]\label{prop:main identity} Let $\eta:\Z^d\times\R^n\to(-\infty,\infty]$ and finite $\Lambda\subset\Z^d$. For each $s:\Z^d\to\R^n$ and $\varphi:\Z^d\to\R^n$ we have
\begin{equation}\label{eq:main identity}
    H^{\eta^s,\Lambda}(\varphi+s) - H^{\eta,\Lambda}(\varphi) = (\varphi,-\Delta_\Lambda s) + \frac{1}{2}\|\nabla s\|_\Lambda^2-\sum_{v\in\Lambda}\eta_{v,-s_v}.
\end{equation}
\end{prop}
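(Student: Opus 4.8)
The plan is to expand both sides of~\eqref{eq:main identity} directly from the definition~\eqref{eq:finite volume Hamiltonian} of the finite-volume Hamiltonian and the definition~\eqref{eq:eta s def} of the shifted and recentered disorder, and then match terms. Writing $H^{\eta,\Lambda}(\varphi) = \frac12\|\nabla\varphi\|_\Lambda^2 + \sum_{v\in\Lambda}\eta_{v,\varphi_v}$ in the notation of Section~2.1, I would split the difference $H^{\eta^s,\Lambda}(\varphi+s) - H^{\eta,\Lambda}(\varphi)$ into an \emph{elastic part} and a \emph{disorder part} and treat each separately.

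For the elastic part, I would compute $\tfrac12\|\nabla(\varphi+s)\|_\Lambda^2 - \tfrac12\|\nabla\varphi\|_\Lambda^2$. Using bilinearity of $(\nabla\cdot,\nabla\cdot)_\Lambda$, this equals $(\nabla\varphi,\nabla s)_\Lambda + \tfrac12\|\nabla s\|_\Lambda^2$. The discrete Green's identity~\eqref{eq:Green's identity} then rewrites $(\nabla\varphi,\nabla s)_\Lambda = (\varphi,-\Delta_\Lambda s)$, which produces exactly the first two terms on the right-hand side of~\eqref{eq:main identity}; here one should note that all sums are finite since $\Lambda$ is finite and $\Delta_\Lambda s$ is supported on $\Lambda^+$, so the absolute-convergence hypothesis in~\eqref{eq:Green's identity} is automatic.

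For the disorder part, the key observation is that $(\eta^s)_{v,(\varphi+s)_v} = \eta^s_{v,\varphi_v+s_v} = \eta_{v,(\varphi_v+s_v)-s_v} - \eta_{v,-s_v} = \eta_{v,\varphi_v} - \eta_{v,-s_v}$, directly from~\eqref{eq:eta s def} with $t = \varphi_v + s_v$. Summing over $v\in\Lambda$ gives $\sum_{v\in\Lambda}(\eta^s)_{v,(\varphi+s)_v} - \sum_{v\in\Lambda}\eta_{v,\varphi_v} = -\sum_{v\in\Lambda}\eta_{v,-s_v}$, which is the last term of~\eqref{eq:main identity}. Adding the elastic and disorder contributions yields the identity. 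One mild point to check is that the manipulation is valid even when $\eta$ takes the value $+\infty$: if $\eta_{v,\varphi_v}=+\infty$ for some $v\in\Lambda$ then the same vertex contributes $+\infty$ to $H^{\eta^s,\Lambda}(\varphi+s)$ as well (since $\eta^s_{v,\varphi_v+s_v}$ differs from $\eta_{v,\varphi_v}$ only by the finite shift $-\eta_{v,-s_v}$, provided $\eta_{v,-s_v}<\infty$), so both sides are $+\infty$ and the identity holds in the extended sense; the only genuinely excluded case is $\eta_{v,-s_v}=+\infty$, which has probability zero for fractional Brownian disorder and can be excluded from the statement or handled by the convention that the identity is asserted on the full-measure event where all the recentering terms are finite.

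There is no real obstacle here — the statement is a deterministic algebraic identity. The only thing requiring a little care is bookkeeping: making sure the cross term from expanding the squared gradient is handled by Green's identity with the correct sign, and making sure the $\eta^s$ substitution is applied at the shifted argument $\varphi_v+s_v$ rather than $\varphi_v$. I expect the cleanest writeup to be two short displays, one for the elastic part and one for the disorder part, followed by their sum.
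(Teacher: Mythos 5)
Your proposal is correct and follows essentially the same route as the paper: the paper likewise first records the identity $\eta^s_{v,\varphi_v+s_v}=\eta_{v,\varphi_v}-\eta_{v,-s_v}$, then expands the squared gradient bilinearly and applies the discrete Green's identity~\eqref{eq:Green's identity} to convert $(\nabla\varphi,\nabla s)_\Lambda$ into $(\varphi,-\Delta_\Lambda s)$. Your extra remark about the extended-value case $\eta=+\infty$ is a harmless refinement not spelled out in the paper.
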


\begin{proof}
We first observe that, by the definition~\eqref{eq:eta s def} of $\eta^s$, for each $v\in\Z^d$,
\begin{equation}
     \eta^s_{v,\varphi_v+s_v} = \eta_{v,\varphi_v}-\eta_{v,-s_v}.
\end{equation}
Therefore, using the discrete Green's identity~\eqref{eq:Green's identity},
\begin{equation}
\begin{split}
    H^{\eta^s,\Lambda}(\varphi+s) - H^{\eta,\Lambda}(\varphi) &= \frac{1}{2}(\|\nabla (\varphi+s)\|_\Lambda^2 - \|\nabla\varphi\|_\Lambda^2) +  \sum_{v\in\Lambda} (\eta^s_{v,\varphi_v+s_v}-\eta_{v,\varphi_v})\\
    &=\frac{1}{2}\big((\nabla(\varphi+s),\nabla(\varphi+s))_\Lambda - (\nabla\varphi, \nabla\varphi)_\Lambda\big)-\sum_{v\in\Lambda}\eta_{v,-s_v}\\
    &=(\nabla\varphi,\nabla s)_\Lambda + \frac{1}{2}(\nabla s, \nabla s)_\Lambda-\sum_{v\in\Lambda}\eta_{v,-s_v}\\
    &=(\varphi,-\Delta_\Lambda s) + \frac{1}{2}\|\nabla s\|_\Lambda^2-\sum_{v\in\Lambda}\eta_{v,-s_v}.\qedhere
\end{split}    
\end{equation}
\end{proof}

We proceed to discuss the effect of boundary values.

\begin{prop}[Effect of boundary values]\label{prop:effect of boundary conditions} Let $\Lambda\subset \Z^d$ finite and $\tau:\Z^d \to \R^n$. Then
\begin{alignat}{1}
\varphi^ {\eta,\Lambda,\tau }&=\varphi^ {\eta^ {-\bar{\tau}^\Lambda},\Lambda}+\bar{\tau}^\Lambda,\\
\GE^{\eta,\Lambda,\tau}&= \GE^ {\eta^ {-\bar{\tau}^\Lambda},\Lambda }+\frac{1}{2}\|\tau\|_{\DE(\Lambda)}^2-\sum_{v\in\Lambda}\eta^{-\bar{\tau}^\Lambda}_{v,-\bar{\tau}^\Lambda_v}.
\end{alignat}
Consequently, we have the identity in distribution,
\begin{equation}
    (\varphi^ {\eta,\Lambda,\tau}, \GE^{\eta,\Lambda,\tau})\eqd(\varphi^ {\eta,\Lambda}, \GE^{\eta,\Lambda}-\sum_{v\in\Lambda}\eta_{v,-\bar{\tau}^\Lambda_v}) + \Big(\bar{\tau}^\Lambda,\frac{1}{2}\|\tau\|_{\DE(\Lambda)}^2\Big).
\end{equation}
\end{prop}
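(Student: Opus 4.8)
The plan is to reduce the boundary-value problem to the zero-boundary-value case by subtracting the harmonic extension $s:=\bar{\tau}^\Lambda$ and invoking the main identity, Proposition~\ref{prop:main identity}. Two elementary observations drive the argument. First, since $s=\tau$ on $\Z^d\setminus\Lambda$, the map $\varphi\mapsto\varphi+s$ is a bijection from $\Omega^{\Lambda,0}$ (the configuration space with zero boundary values) onto $\Omega^{\Lambda,\tau}$. Second, for our disorder $\eta_{v,\cdot}\eqd B^H$ we have $\eta_{v,0}=B^H_0=0$ almost surely, whence $(\eta^{-s})^s=\eta$ as functions on $\Z^d\times\R^n$; this lets us apply the main identity ``in reverse'', with $\eta^{-s}$ playing the role of the base disorder.

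Concretely, I would apply Proposition~\ref{prop:main identity} with disorder $\eta^{-s}$ and shift $s$; using $(\eta^{-s})^s=\eta$, this gives, for every $\varphi:\Z^d\to\R^n$,
\[
H^{\eta,\Lambda}(\varphi+s)-H^{\eta^{-s},\Lambda}(\varphi)=(\varphi,-\Delta_\Lambda s)+\frac12\|\nabla s\|_\Lambda^2-\sum_{v\in\Lambda}\eta^{-s}_{v,-s_v}.
\]
Restricting to $\varphi\in\Omega^{\Lambda,0}$, I would simplify the right-hand side: the linear term $(\varphi,-\Delta_\Lambda s)$ vanishes, because $\Delta_\Lambda s$ is supported on $\Lambda^+$ and vanishes on $\Lambda$ (by harmonicity of $s$ there), hence is supported on $\Lambda^+\setminus\Lambda$, on which $\varphi\equiv0$; and $\|\nabla s\|_\Lambda^2=\|\tau\|_{\DE(\Lambda)}^2$ by definition. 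The two remaining summands are constant in $\varphi$, so minimizing both sides over $\varphi\in\Omega^{\Lambda,0}$---equivalently over $\varphi+s$ ranging in $\Omega^{\Lambda,\tau}$---together with uniqueness of the minimizer (Proposition~\ref{prop:existence}, applied to $\eta$ and to the deterministically shifted, hence equidistributed, environment $\eta^{-s}$) yields $\varphi^{\eta,\Lambda,\tau}=\varphi^{\eta^{-s},\Lambda}+s$. Substituting this back into the displayed identity produces the asserted formula for $\GE^{\eta,\Lambda,\tau}$.

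For the identity in distribution, I would observe that $s=\bar{\tau}^\Lambda$ is deterministic and that $\eta'\mapsto\bigl(\varphi^{\eta',\Lambda},\,\GE^{\eta',\Lambda},\,\sum_{v\in\Lambda}\eta'_{v,-s_v}\bigr)$ is a measurable functional of the environment. By the stationary-increments property~\eqref{eq:distribution of shifted disorder}, $\eta^{-s}\eqd\eta$, so this triple evaluated at $\eta^{-s}$ has the same law as at $\eta$; pushing both through the common deterministic map $(a,b,c)\mapsto\bigl(a+s,\ b+\frac12\|\tau\|_{\DE(\Lambda)}^2-c\bigr)$ and combining with the two displayed formulas above gives the claimed equality in law.

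I do not expect a genuine obstacle: the statement is essentially an application of Propositions~\ref{prop:main identity} and~\ref{prop:existence} together with~\eqref{eq:distribution of shifted disorder}. The only delicate point is the bookkeeping of signs and recenterings---in particular checking $(\eta^{-s})^s=\eta$ (which is exactly where $\eta_{v,0}=0$ is used) and that the linear term $(\varphi,-\Delta_\Lambda s)$ drops out on $\Omega^{\Lambda,0}$.
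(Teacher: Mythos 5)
Your proposal is correct and follows essentially the same route as the paper: apply the main identity with base disorder $\eta^{-\bar{\tau}^\Lambda}$ and shift $\bar{\tau}^\Lambda$, kill the linear term using harmonicity of $\bar{\tau}^\Lambda$ on $\Lambda$ and $\varphi\equiv 0$ off $\Lambda$, minimize, and invoke $\eta^{-\bar{\tau}^\Lambda}\eqd\eta$ for the distributional identity. Your explicit verification that $(\eta^{-s})^s=\eta$ via $\eta_{v,0}=0$ is a point the paper leaves implicit, but it is the same argument.
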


\begin{proof}
    First note that $\varphi+{\bar{\tau}^\Lambda}\in \Omega^ {\Lambda,\tau}$ if and only if $\varphi\in\Omega^ {\Lambda}$.
        Second, Proposition \ref{prop:main identity} (applied to $\eta^{-\bar{\tau}^\Lambda}$ and $\bar{\tau}^\Lambda$) implies that for each $\varphi\in\Omega^ {\Lambda}$,
        \begin{equation*}
H^{\eta,\Lambda}(\varphi+{\bar{\tau}^\Lambda}) - H^{\eta^{-\bar{\tau}^\Lambda},\Lambda}(\varphi) = (\varphi,-\Delta_\Lambda {\bar{\tau}^\Lambda}) + \frac{1}{2}\|\nabla \bar{\tau}^\Lambda\|_\Lambda^2-\sum_{v\in\Lambda}\eta^{-\bar{\tau}^\Lambda_v}_{v,-\bar{\tau}^\Lambda_v}= \frac{1}{2}\|\tau\|_{\DE(\Lambda)}^2-\sum_{v\in\Lambda}\eta^{-\bar{\tau}^\Lambda_v}_{v,-\bar{\tau}^\Lambda_v}
\end{equation*}
using that $\Delta_\Lambda(\overline{\tau}^\Lambda) = 0$ on $\Lambda$ and $\varphi=0$ off $\Lambda$. Thus,
\begin{equation}\label{boundary}
    \text{$\varphi$ minimizes $H^{\eta^{-\bar{\tau}^\Lambda},\Lambda}$ over $\Omega^ {\Lambda}$ if and only if $\varphi+{\bar{\tau}^\Lambda}$ minimizes $H^{\eta,\Lambda}$ over $\Omega^ {\Lambda,\tau}$}.
\end{equation}
The second part of the conclusion, follows from the fact that for any shift function $s$, the environments $\eta^s$ and $\eta$ are identically distributed.
\end{proof}

\subsection{Concentration estimates}

Recall that the notation $\eta_A$ refers to $(\eta_{v,\cdot})_{v\in A}$ (as defined before assumption~\ref{as:conc}). The following concentration estimate is proved in Section \ref{sec:assumptions for eta white} using general concentration inequalities for Gaussian processes (stemming from the fundamental work of Borell and Tsirelson--Ibragimov--Sudakov). 

\begin{prop}[Concentration for the ground energy]\label{as:conc} 
There exists a universal constant $c>0$ such that for every finite  $\Delta\subset\Lambda \subset \mathbb Z ^d$, every $\tau:\Z^d\to\R^n$ and every $r,h>0$ we have almost surely
\begin{equation*}
\begin{split}
    \inf _{\gamma \in \mathbb R} \P \big( \big|\GE^{\eta,\Lambda,\tau} - \gamma  \big| \ge r \mid \eta_{\Delta^c} \big)&\le 2 \exp \Big(- \frac{cr^2}{h^{2H}|\Delta|} \Big) +\mathbb P \Big(  \frac{1}{|\Delta |}\sum _{v\in \Delta } \|\varphi^{\eta,\Lambda,\tau}_v\|^{2H} \ge h^{2H} \mid \eta_{\Delta^c}\Big)\\
    &\le 2 \exp \Big(- \frac{cr^2}{h^{2H}|\Delta|} \Big) +\mathbb P \Big(  \max _{v\in \Delta } \|\varphi ^{\eta,\Lambda,\tau }_v\| \ge h \mid \eta_{\Delta^c}\Big).
\end{split}
\end{equation*}
\end{prop}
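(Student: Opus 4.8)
The plan is to condition on $\eta_{\Delta^c}$ and apply Gaussian concentration (in the form of the Borell and Tsirelson--Ibragimov--Sudakov inequality) to a \emph{truncated} version of the ground energy, carrying the height event along as the natural error term. The point is that $\GE^{\eta,\Lambda,\tau}$ itself is not a Lipschitz functional of the disorder --- configurations reaching large heights see disorder with arbitrarily large variance --- but once one restricts to configurations of bounded height the relevant Gaussian functional becomes uniformly Lipschitz, with the right Lipschitz constant.

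\textbf{Setup after conditioning.} Since the disorder is independent across vertices, $\eta_\Delta:=(\eta_{v,\cdot})_{v\in\Delta}$ is independent of $\eta_{\Delta^c}$, so conditioning on $\eta_{\Delta^c}$ does not change the law of $\eta_\Delta$ and only affects the Hamiltonian through the functional $G(\varphi):=\tfrac12\|\nabla\varphi\|_\Lambda^2+\sum_{v\in\Lambda\setminus\Delta}\eta_{v,\varphi_v}$, which is deterministic given $\eta_{\Delta^c}$ (note $\Lambda\setminus\Delta\subset\Delta^c$). Thus $\GE^{\eta,\Lambda,\tau}=\inf_{\varphi\in\Omega^{\Lambda,\tau}}\big(G(\varphi)+\sum_{v\in\Delta}\eta_{v,\varphi_v}\big)$, and for each fixed $\varphi$ the quantity $\sum_{v\in\Delta}\eta_{v,\varphi_v}$ is a centered Gaussian of variance $\sum_{v\in\Delta}\var(\eta_{v,\varphi_v})=\sum_{v\in\Delta}\|\varphi_v\|^{2H}$ by independence across vertices and the normalization $\var(B^H_t)=\|t\|^{2H}$. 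This variance is unbounded over all $\varphi$, which is precisely why the truncation is needed.

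\textbf{Truncated ground energy and concentration.} Fix $h>0$, let $\mathcal{C}_h:=\{\varphi\in\Omega^{\Lambda,\tau}\colon \sum_{v\in\Delta}\|\varphi_v\|^{2H}\le|\Delta|h^{2H}\}$ and set $\GE_h:=\inf_{\varphi\in\mathcal{C}_h}\big(G(\varphi)+\sum_{v\in\Delta}\eta_{v,\varphi_v}\big)$. Representing $\eta_\Delta$ via an isonormal Gaussian process $W$ on its Cameron--Martin space, so that $\sum_{v\in\Delta}\eta_{v,\varphi_v}=W(a_\varphi)$ with $\|a_\varphi\|^2=\sum_{v\in\Delta}\|\varphi_v\|^{2H}\le|\Delta|h^{2H}$ on $\mathcal{C}_h$, we see that $\GE_h$ is (minus) the supremum of a Gaussian process with variance proxy $\sigma^2:=|\Delta|h^{2H}$. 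One checks that $\GE_h$ is a.s.\ finite: configurations in $\mathcal{C}_h$ are pinned to $\tau$ off $\Lambda$ and bounded on $\Delta$, so large $\varphi$ forces the Dirichlet energy to grow quadratically while the disorder grows sub-quadratically, exactly as in the proof of Proposition~\ref{prop:existence}; by sample-path continuity the infimum also equals an infimum over a countable dense set, so Borell--TIS applies and yields, with $\gamma_0:=\med(\GE_h\mid\eta_{\Delta^c})$,
\[
\P\big(|\GE_h-\gamma_0|\ge r\mid\eta_{\Delta^c}\big)\le 2\exp\Big(-\frac{r^2}{2|\Delta|h^{2H}}\Big)\qquad\text{a.s. in }\eta_{\Delta^c}.
\]

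\textbf{Removing the truncation and conclusion.} If the a.s.\ unique minimizer $\varphi^{\eta,\Lambda,\tau}$ lies in $\mathcal{C}_h$ --- i.e.\ off the event $B_h:=\{\tfrac1{|\Delta|}\sum_{v\in\Delta}\|\varphi^{\eta,\Lambda,\tau}_v\|^{2H}>h^{2H}\}$ --- then $\GE_h\le H^{\eta,\Lambda}(\varphi^{\eta,\Lambda,\tau})=\GE^{\eta,\Lambda,\tau}$, while $\GE^{\eta,\Lambda,\tau}\le\GE_h$ trivially, so $\GE^{\eta,\Lambda,\tau}=\GE_h$ on $B_h^c$. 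Hence $\{|\GE^{\eta,\Lambda,\tau}-\gamma_0|\ge r\}\subseteq\{|\GE_h-\gamma_0|\ge r\}\cup B_h$, and taking $\gamma=\gamma_0$ in the infimum over $\gamma$ gives
\[
\inf_{\gamma\in\R}\P\big(|\GE^{\eta,\Lambda,\tau}-\gamma|\ge r\mid\eta_{\Delta^c}\big)\le 2\exp\Big(-\frac{r^2}{2|\Delta|h^{2H}}\Big)+\P\Big(\tfrac1{|\Delta|}\textstyle\sum_{v\in\Delta}\|\varphi^{\eta,\Lambda,\tau}_v\|^{2H}\ge h^{2H}\ \Big|\ \eta_{\Delta^c}\Big),
\]
which is the first asserted inequality with the universal constant $c=1/2$. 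The second inequality follows immediately, since an average is at most a maximum: $\tfrac1{|\Delta|}\sum_{v\in\Delta}\|\varphi_v\|^{2H}\ge h^{2H}$ forces $\max_{v\in\Delta}\|\varphi_v\|\ge h$.

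\textbf{Main obstacle.} The variance computation and the set inclusions are routine; the step requiring genuine care is the rigorous application of Gaussian concentration to $\GE_h$ --- establishing a.s.\ finiteness via coercivity, measurability of $\gamma_0$ in $\eta_{\Delta^c}$, and the reduction to a countable supremum so that the Borell / TIS inequality (or the Gaussian isoperimetric inequality) applies verbatim. This step is also what dictates the shape of the statement: one must truncate exactly at the $\ell^{2H}$-height scale $h$, because that is the scale at which the per-vertex disorder variance $\|\varphi_v\|^{2H}$ is controlled, and the probability that the true minimizer violates the truncation is precisely the second term in the bound.
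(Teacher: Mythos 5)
Your proposal is correct and follows essentially the same route as the paper: truncate the ground energy to configurations with $\frac{1}{|\Delta|}\sum_{v\in\Delta}\|\varphi_v\|^{2H}\le h^{2H}$, apply Borell--TIS conditionally on $\eta_{\Delta^c}$ with variance proxy $|\Delta|h^{2H}$, and absorb the event that the true minimizer violates the truncation into the second term. The only cosmetic difference is that you centre at the conditional median where the paper centres at the conditional expectation; both are admissible choices of $\gamma$ in the infimum.
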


Combining the last proposition with the main identity we obtain the following estimate on the minimal surface.

\begin{lemma}[Concentration for linear functionals of the minimal surface]\label{lem:fluctuation and concentration} There are universal constants $C,c>0$ such that for every finite $\Lambda\subset\Z^d$, a function $s:\Z^d\to\R^n$ and $h\ge r>0$ we have
\begin{equation}
\P\left(\left|(\varphi^{\eta, \Lambda},-\Delta_\Lambda s)\right|\ge r \right)  \le C\exp \Big( -\frac{cr^4  }{h^{2H}D(s)}  \Big) +3\mathbb P \Big( \max _{v\in \supp (s) } \| \varphi ^{\eta,\Lambda}_v \| \ge h \Big),
\end{equation}
where $D(s)$ is given by 
\begin{equation}
    D(s):= \|\nabla s\|_\Lambda ^{4-4H} \|s\|_{2H}^{2H} + \|\nabla s\|_\Lambda ^4 |\supp (s)|
\end{equation}
and $\|s\|_{2H}^{2H}:=\sum _{v\in \Lambda } \|s_v\|^{2H}$.
\end{lemma}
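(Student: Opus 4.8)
The plan is to apply the concentration estimate of Proposition~\ref{as:conc} with the shift function $s$ playing the role of the ``test displacement'', and the set $\Delta := \supp(s)$, after using the main identity to express the linear functional $(\varphi^{\eta,\Lambda},-\Delta_\Lambda s)$ as a difference of ground energies. Concretely, by the main identity~\eqref{eq:main identity} applied to $\eta$, $\varphi^{\eta,\Lambda}$ and $s$,
\begin{equation*}
  H^{\eta^s,\Lambda}(\varphi^{\eta,\Lambda}+s) - \GE^{\eta,\Lambda} = (\varphi^{\eta,\Lambda},-\Delta_\Lambda s) + \tfrac12\|\nabla s\|_\Lambda^2 - \sum_{v\in\Lambda}\eta_{v,-s_v},
\end{equation*}
and since $H^{\eta^s,\Lambda}(\varphi^{\eta,\Lambda}+s)\ge \GE^{\eta^s,\Lambda}$ with equality up to a controllable error, one gets a two-sided comparison of $(\varphi^{\eta,\Lambda},-\Delta_\Lambda s)$ with $\GE^{\eta^s,\Lambda} - \GE^{\eta,\Lambda} + \text{(deterministic/linear-in-}\eta\text{ terms)}$. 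Using~\eqref{eq:distribution of shifted disorder}, $\GE^{\eta^s,\Lambda}\eqd\GE^{\eta,\Lambda}$, so the fluctuations of the difference are controlled by the fluctuations of a single ground energy plus the fluctuations of $\sum_{v\in\supp(s)}\eta_{v,-s_v}$; the latter is a centered Gaussian with variance $\sum_v\|s_v\|^{2H} = \|s\|_{2H}^{2H}$, giving a Gaussian tail $\exp(-cr^2/\|s\|_{2H}^{2H})$.

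The main quantitative step is then to bound $\std(\GE^{\eta^s,\Lambda})$, i.e., the anti/concentration of the ground energy, via Proposition~\ref{as:conc} applied with $\Delta=\supp(s)$ and some height $h$: this produces the term $2\exp(-cr'^2/(h^{2H}|\supp(s)|))$ plus the probability that the surface on $\supp(s)$ exceeds height $h$. Matching the scales $r'$ appropriately with $r$ through the $\tfrac12\|\nabla s\|_\Lambda^2$ factor (which sets the natural energy scale of the shift) is what produces the exponent $4$ and the quantity $D(s) = \|\nabla s\|_\Lambda^{4-4H}\|s\|_{2H}^{2H} + \|\nabla s\|_\Lambda^4|\supp(s)|$: roughly, one takes $r' \asymp r/\|\nabla s\|_\Lambda$ so that $r'^2/(h^{2H}|\supp(s)|) \asymp r^2/(\|\nabla s\|_\Lambda^2 h^{2H}|\supp(s)|)$, and to make the Gaussian-in-$\eta$ term match one rescales $h$, producing the $r^4$ in the numerator and the two summands of $D(s)$ as the two competing denominators. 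The constant $3$ in front of the height probability, and the requirement $h\ge r$, come from collecting the conditional-probability terms after integrating over $\eta_{\Delta^c}$ and from needing the shift to be ``small enough'' relative to $h$ that the height event on $\supp(s)$ is the relevant one.

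I expect the main obstacle to be the bookkeeping in choosing the free parameter inside Proposition~\ref{as:conc} (the height $h'$ in that proposition, distinct from the $h$ in the lemma) so that, after optimizing, the two error sources — the Gaussian term $\exp(-cr^2/(h^{2H}|\supp(s)|))$ coming from the ground-energy concentration and the Gaussian term $\exp(-cr^2/\|s\|_{2H}^{2H})$ coming from $\sum_v\eta_{v,-s_v}$ — both get folded into the single expression $\exp(-cr^4/(h^{2H}D(s)))$ with the correct power; this requires carefully tracking how $\|\nabla s\|_\Lambda^2$ converts an ``energy-scale'' deviation into a ``height-scale'' deviation. A secondary technical point is that Proposition~\ref{as:conc} is stated conditionally on $\eta_{\Delta^c}$ and with an infimum over centering $\gamma\in\R$, so one must argue that the unconditional bound follows by taking expectations and that replacing the unknown optimal $\gamma$ by, say, a median is harmless; this is routine but needs to be said. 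Once the scales are set, the remaining steps — invoking~\eqref{eq:main identity}, using $\GE^{\eta^s,\Lambda}\eqd\GE^{\eta,\Lambda}$, and a union bound combining the height events on $\supp(s)$ under $\eta$ and under $\eta^s$ — are straightforward.
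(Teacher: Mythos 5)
Your overall architecture matches the paper's: apply the main identity with a shift built from $s$, use that $\GE^{\eta^{s'}}$ and $\GE^{\eta}$ are equidistributed conditionally on the disorder off $\supp(s)$, control the ground-energy deviation via Proposition~\ref{as:conc} with $\Delta=\supp(s)$, and treat $\sum_v\eta_{v,\cdot}$ as a Gaussian term. However, there is a genuine gap at the quantitative core, which you yourself flag as "the main obstacle" and leave unresolved: the argument only works if you apply the main identity with the \emph{rescaled} shift $\rho s$, where $\rho=-r/\|\nabla s\|_\Lambda^2$, rather than with $s$ itself. With the unscaled $s$, the containment you get from $H^{\eta^{s}}(\varphi^{\eta}+s)\ge\GE^{\eta^{s}}$ reads $\GE^{\eta^{s}}-\GE^{\eta}\le(\varphi^{\eta},-\Delta_\Lambda s)+\tfrac12\|\nabla s\|_\Lambda^2-\sum_v\eta_{v,-s_v}$, which is vacuous on the event $\{(\varphi^{\eta},-\Delta_\Lambda s)\le -r\}$ whenever $r\lesssim\|\nabla s\|_\Lambda^2$ — and that regime is squarely inside the range where the lemma's bound is nontrivial. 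The rescaling makes the linear gain $\rho r = r^2/\|\nabla s\|_\Lambda^2$ beat the elastic cost $\rho^2\|\nabla s\|_\Lambda^2/2$, so the ground-energy deviation scale is $r^2/\|\nabla s\|_\Lambda^2$ (not your $r'\asymp r/\|\nabla s\|_\Lambda$), and squaring this inside Proposition~\ref{as:conc} is precisely what produces $r^4/(h^{2H}\|\nabla s\|_\Lambda^4|\supp(s)|)$, i.e., the second summand of $D(s)$. No "rescaling of $h$" is involved.

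The same omission breaks your treatment of the disorder term: the relevant Gaussian is $\sum_v\eta_{v,\pm\rho s_v}$, whose variance is $|\rho|^{2H}\|s\|_{2H}^{2H}=r^{2H}\|\nabla s\|_\Lambda^{-4H}\|s\|_{2H}^{2H}$, and it must exceed the scale $r^2/(4\|\nabla s\|_\Lambda^2)$; this gives the tail $\exp\bigl(-cr^{4-2H}\|\nabla s\|_\Lambda^{4H-4}/\|s\|_{2H}^{2H}\bigr)$, which becomes the first summand of $D(s)$ after using $h\ge r$ to write $r^{4-2H}\ge r^4h^{-2H}$ (this, not a "smallness of the shift" condition, is the only role of the hypothesis $h\ge r$). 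Your claimed tail $\exp(-cr^2/\|s\|_{2H}^{2H})$ does not match $D(s)$ and cannot be massaged into it without the rescaling. Two smaller points: you do not need "equality up to a controllable error" in $H^{\eta^{\rho s}}(\varphi^{\eta}+\rho s)\ge\GE^{\eta^{\rho s}}$ — the one-sided inequality suffices, and the two tails of $(\varphi^{\eta},-\Delta_\Lambda s)$ are handled by using the two shifts $\pm\rho s$ separately, which is also where the factor $3$ and the infimum over $\gamma$ (applied to the common conditional law of the three ground energies, then integrated over $\eta_{\supp(s)^c}$) come from.
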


\begin{proof}
    For brevity, we write $H^{\eta}$, $\varphi^\eta$ and $\GE^\eta$ for $H^{\eta,\Lambda}$, $\varphi^{\eta,\Lambda}$ and $\GE^{\eta,\Lambda}$, respectively.
    
    By the main identity (Proposition~\ref{prop:main identity}), for each $\rho\in\R$, 
    \begin{equation}
       \GE ^{\eta ^{\rho s}}-\GE ^\eta  \le H^{\eta^{\rho s}}(\varphi^{\eta}+\rho s) - H^{\eta}(\varphi^{\eta}) = \rho(\varphi^{\eta},-\Delta_\Lambda s) + \frac{\rho^2}{2}\|\nabla s\|_\Lambda^2- \sum _{v\in \Lambda } \eta _{v,-\rho s_v}.
    \end{equation}
    It follows that for $\rho=  -\frac{r}{\|\nabla s\|_\Lambda^2}$ we have the containment
    \begin{equation}
        \left\{(\varphi^{\eta}, -\Delta_\Lambda s)\ge r\right\} \subset\left\{\GE ^{\eta ^{\rho s}}-\GE ^\eta + \sum _{v\in \Lambda } \eta _{v,-\rho s_v } \le -\frac{r^2}{2\|\nabla s\|_\Lambda ^2} \right\} .
    \end{equation}
    Next, define the event  $\mathcal A ^-:= \{\sum _{v\in \Lambda } \eta _{v,-\rho s_v } \le -r^2/(4\|\nabla s\|_\Lambda ^2)\}$. We obtain for any $\gamma\in\R$,
    \begin{equation}\label{eq:bound for +r}
    \left\{(\varphi^{\eta}, -\Delta_\Lambda s)\ge r\right\} \subset\left\{|\GE^{\eta^{\rho s}} - \gamma|\ge \frac{r^2}{8\|\nabla s\|_\Lambda ^2}\right\}\cup\left\{|\GE^{\eta} - \gamma|\ge \frac{r^2}{8\|\nabla s\|_\Lambda ^2}\right\}\cup \mathcal A ^-. 
    \end{equation}
    Repeating the above derivation with $r$ replaced by $-r$, we also conclude that, for any $\gamma\in\R$,
    \begin{equation}\label{eq:bound for -r}
    \left\{(\varphi^{\eta}, -\Delta_\Lambda s)\le -r\right\} \subset\left\{|\GE^{\eta^{-\rho s}} - \gamma|\ge \frac{r^2}{8\|\nabla s\|_\Lambda^2}\right\}\cup\left\{|\GE^{\eta} - \gamma|\ge \frac{r^2}{8\|\nabla s\|_\Lambda^2}\right\} \cup \mathcal A ^+,
    \end{equation}
    where $\mathcal A ^+:= \{\sum _{v\in \Lambda } \eta _{v,\rho s_v } \ge r^2/(4\|\nabla s\|_\Lambda ^2)\}$. Next, let $\mathcal F$ be the sigma algebra generated by the noise $\eta _v$ for all $v\notin \supp (s)$. Conditionally on $\mathcal F $, the ground energies $\GE^{\eta^{\rho s}}$, $\GE^{\eta^{-\rho s}}$ and $\GE^\eta$ all have the same distribution. Moreover, $\mathcal A ^+$ and $\mathcal A ^{-}$ are independent of $\mathcal F $ and have the same probability. Using these facts together with \eqref{eq:bound for +r} and \eqref{eq:bound for -r} we obtain
    \begin{equation}\label{eq:773}
        \P\left(\left|(\varphi^{\eta },-\Delta_\Lambda s)\right|\ge r\mid \mathcal F \right) \le 3\inf_{\gamma\in\R}\P\left(\left|\GE^{\eta }-\gamma\right|\ge \frac{r^2}{4\|\nabla s\|_\Lambda^2}\mid  \mathcal F \right) +2 \mathbb P (\mathcal A ^+).
    \end{equation}
     By Proposition~\ref{as:conc} with $\Delta =\supp (s)$ we have
    \begin{equation*}      \inf_{\gamma\in\R}\P\left(\left|\GE^{\eta }-\gamma\right|\ge \frac{r^2}{4\|\nabla s\|_\Lambda^2}\mid  \mathcal F \right) \le C \exp \Big(- \frac{cr^4}{h^{2H} \|\nabla s\|_\Lambda^4|\supp (s)|} \Big) +\mathbb P \big(  \max _{v\in \supp (s) } \|\varphi ^\eta  _v\| \ge h \mid \mathcal F \big)
    \end{equation*}
    Next, we have that  $\sum _v \eta _{v,\rho s_v} \sim N(0,\sigma ^2 )$ where $\sigma ^2 =\sum _v \| \rho s_v\|^{2H} =r^{2H}\|\nabla s\|_\Lambda^{-4H} \|s\|_{2H}^{2H} $  and therefore 
    \begin{equation*}
        \mathbb P ( \mathcal A^+  )\le C\exp \Big(- \frac{cr^{4-2H}}{\|\nabla s\|_\Lambda ^{4-4H} \|s\|_{2H}^{2H} } \Big)
    \end{equation*}
   Substituting these bounds into \eqref{eq:773} and taking expectations on both sides finishes the proof of the lemma.
\end{proof}

\section{Localization}\label{sec:loc}

In this section, we prove upper bounds for the height and ground energy fluctuations from Theorems \ref{thm:main 123}, \ref{thm: main 4} and \ref{thm:main d>4}. 
\subsection{Estimates on Green's function}
Throughout this section we often use the Green's function $G_\Lambda ^v$ which is defined as follows. Let $\Lambda\subset \mathbb Z^d$. Let $\partial\Lambda$ denote the exterior boundary of $\Lambda$, that is $\partial \Lambda:=\Lambda^+\setminus \Lambda$ where $\Lambda ^+=\{v\in \mathbb Z ^d : \exists u\sim v \text{ with } \{u,v\}\cap \Lambda \neq \emptyset \}$. The Green's function in the domain $\Lambda $ for $v\in\Lambda$ is defined by 
\[\forall x\in\Z^ d\qquad G_\Lambda^v(x):=\frac{1}{2d} \cdot \E_x\left[ \big| \big\{ t\in [0, \tau _{\Lambda }] : X_t=v\big\} \big| \right],\]
where $(X_t)_{t\ge0}$ is a simple discrete-time random walk on $\Z^d$ with $X_0=x$ and $\tau _\Lambda $ is the first exit time of $\Lambda $,  $\tau_{\Lambda }:=\min\{t\ge 0:X_t\notin \Lambda \}$. When $\Lambda =\Lambda _L$ we write $G_L ^v:=G_{\Lambda _L}^v$. Note that $\Delta G_\Lambda ^v (v)=-1$ and $\Delta G_\Lambda ^v (u)=0$ for any  $u\notin \partial \Lambda \cup \{v\}$. 

We will need the following estimates for the Green's function in a box. See \cite[Lemma~3.6]{dembin2024minimal} for the proof.

\begin{lem}\label{lem:Green}
    Let $\Lambda :=[a_1,b_1]\times \cdots \times [a_d,b_d]$ be a box. For $v\in \Lambda $ we write $r_v:=d(v,\Lambda ^c)$. We have:    \begin{enumerate}
        \item 
        For all $v,x\in \Lambda $ such that $r_v \le 2\|v-x\|$ we have 
        \begin{equation}
           G_\Lambda ^v(x)\le Cr_xr_v \|x-v\|^{-d}. 
        \end{equation}
        \item 
        For all $v,u,x\in \Lambda $ such that $r_v\le 2\|x-v\|$ we have 
        \begin{equation}
           |G_\Lambda ^v(x)-G_\Lambda ^u(x) |\le Cr_x\|u-v\|\cdot \|x-v\|^{-d}. 
        \end{equation}
    \end{enumerate}
\end{lem}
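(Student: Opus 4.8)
The plan is to derive both estimates from standard potential theory of the simple random walk in a box. Write $g_\Lambda(x,v):=2d\,G_\Lambda^v(x)$; for $v\in\Lambda$ this is the Green's function of the walk killed on leaving $\Lambda$, so it is symmetric, $g_\Lambda(x,v)=g_\Lambda(v,x)$, monotone in the domain, and satisfies the first-entrance identity $g_\Lambda(x,v)=g_\Lambda(v,v)\,\P_x(\tau_v<\tau_{\Lambda^c})$, where $\tau_v$ is the hitting time of $v$ and $\tau_{\Lambda^c}$ the exit time of $\Lambda$. Two inputs drive everything. First, comparison with half-spaces: $\Lambda$ lies inside the half-space $\mathcal H$ carried by each of its faces, and for $\mathcal H=\{z\colon z_1\ge a\}$ the reflection principle gives $g_{\mathcal H}(y,w)=g_{\Z^d}(y-w)-g_{\Z^d}(y-w^*)$, with $w^*$ the reflection of $w$ across $\{z_1=a-1\}$ (use the two-dimensional potential kernel in place of $g_{\Z^d}$ when $d=2$); since $\|y-w^*\|^2=\|y-w\|^2+4\,\delta(y)\,\delta(w)$ with $\delta:=\mathrm{dist}(\cdot,\mathcal H^c)$, the bound $g_{\Z^d}(z)\le C\|z\|^{2-d}$ and the convexity of $t\mapsto t^{(2-d)/2}$ give $g_{\mathcal H}(y,w)\le C\,\delta(y)\,\delta(w)\,\|y-w\|^{-d}$. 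Second, a boundary-decay estimate: for $w\in\Lambda$ and $r_x\le\ell\le\|x-w\|$, $\P_x(\text{the walk reaches }B(w,\ell)\text{ before }\tau_{\Lambda^c})\lesssim r_x/\ell$; this follows from the discrete maximum principle by comparing with the affine function $z\mapsto\mathrm{dist}(z,\mathcal H_x^c)$ (with $\mathcal H_x$ the half-space at $x$'s nearest face), once one notes that reaching $B(w,\ell)$ forces the walk to travel distance $\gtrsim\ell$, so that — including the case of a box thin in the direction of $x$'s nearest face — the transverse coordinate behaves like a one-dimensional killed walk and its survival probability carries the prefactor $r_x$.

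For the first estimate, set $R:=\|x-v\|$ and split $\P_x(\tau_v<\tau_{\Lambda^c})\le\P_x(\tau'<\tau_{\Lambda^c})\cdot\max_y\P_y(\tau_v<\tau_{\Lambda^c})$, with $\tau'$ the first entrance to $B(v,R/2)$ and the maximum over the possible entrance points $y$, which have $\|y-v\|\asymp R$. By the boundary-decay estimate the first factor is $\lesssim r_x/R$ (after discarding the regime $r_x>2R$, where $g_\Lambda(x,v)\le C\,r_vR^{1-d}\le C\,r_xr_vR^{-d}$ already follows from the half-space comparison, and the regime $R=O(1)$, handled by inspection); by the half-space comparison in the half-space $\mathcal H_v$ at $v$'s nearest face — where $\delta(v)=r_v$ and $\delta(y)\le R+r_v\le 3R$ since $r_v\le 2R$ — the second factor is $\P_y(\tau_v<\tau_{\Lambda^c})\le g_{\mathcal H_v}(y,v)/g_{\mathcal H_v}(v,v)\le C\,r_vR^{1-d}/g_{\mathcal H_v}(v,v)$. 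Multiplying, and using $g_\Lambda(v,v)\le g_{\mathcal H_v}(v,v)$ so the factors $g_{\mathcal H_v}(v,v)$ cancel, gives $g_\Lambda(x,v)\le C\,r_xr_vR^{-d}$. The half-space comparison alone also yields the cruder bound $g_\Lambda(x,v)\le C\,r_vR^{1-d}$ whenever $r_v\le 2R$, which is all that is needed for the second estimate.

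For the second estimate, by the symmetry $G_\Lambda^v(x)=G_\Lambda^x(v)$ set $h:=G_\Lambda^x(\cdot)$, a nonnegative function harmonic on $\Lambda\setminus\{x\}$ and vanishing off $\Lambda$; the claim is a Lipschitz estimate for $h$, and it suffices to treat $\|u-v\|\le\tfrac12 R$ — the regime used in the applications — and, by chaining, $\|u-v\|$ at most a small multiple of $\min(R,r_v+1)$. If $r_v\ge R/10$, then $h$ is positive harmonic on $B(v,R/10)\subseteq\Lambda\setminus\{x\}$ and the interior Cauchy-type gradient estimate for discrete harmonic functions gives $|h(v)-h(u)|\le C\,\|u-v\|\,R^{-1}\sup_{B(v,R/10)}h$. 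If $r_v<R/10$, then $h$ vanishes on $B(v,R/10)\setminus\Lambda$, whose boundary is the complement of a half-space — or, along an edge or at a corner of the box, of an axis-aligned orthant — and the corresponding boundary gradient estimate, provable by reflecting across the relevant faces or by a discrete boundary Harnack inequality, gives the same bound with the supremum over $B(v,R/10)\cap\Lambda$. In both cases every $w$ in the supremum has $\|x-w\|\asymp R$ and $r_w\lesssim\|x-w\|$, so the crude bound from the first estimate gives $\sup h\le C\,r_xR^{1-d}$, whence $|h(v)-h(u)|\le C\,r_x\,\|u-v\|\,R^{-d}$.

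The delicate points, where I would spend most of the effort, are: making the boundary-decay estimate uniform over all boxes, especially thin ones (where the ball $B(v,R/2)$ exits $\mathcal H_x$ and one must invoke the one-dimensional survival bound rather than the plain affine barrier); keeping all constants independent of the box; rerunning the half-space computation with the potential kernel when $d\le 2$ (the case $d=1$ being explicit); and the boundary gradient estimate near edges and corners of the box, where the axis-parallel geometry makes iterated reflections available but the bookkeeping is heaviest.
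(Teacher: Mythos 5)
The paper offers no proof of this lemma in the text --- it is quoted from \cite[Lemma~3.6]{dembin2024minimal} --- so I can only assess your argument on its own terms. Your architecture (symmetrize the Green's function; bound it by the half-space Green's function via the exact reflection formula; combine a first-entrance decomposition with a boundary-decay estimate for part (1); read part (2) as an interior/boundary gradient estimate for the discrete harmonic function $G_\Lambda^x(\cdot)$, using iterated odd reflections near faces, edges and corners) is the standard potential-theoretic route and does yield the lemma.

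The one step that, as written, does not go through is your justification of the boundary-decay estimate $\P_x(\tau_{B(w,\ell)}<\tau_{\Lambda^c})\lesssim r_x/\ell$. Optional stopping with the affine barrier $z\mapsto\mathrm{dist}(z,\mathcal H_x^c)$ only gives $\P_x(\cdot)\le r_x+1$: on the event that the walk reaches $B(w,\ell)$ before leaving the half-space, the martingale's value at that time can be as small as $1$, because the walk may approach $B(w,\ell)$ while hugging the face (and in the application $w=v$ may itself lie near $x$'s nearest face), so there is no factor $1/\ell$ to extract from the maximum principle. The fallback you gesture at --- the walk must travel distance $\gtrsim\ell$, hence the transverse coordinate is a killed one-dimensional walk that must survive $\gtrsim\ell^2$ steps --- decouples a displacement bound from a survival bound and, made quantitative, yields $C r_x\sqrt{\log(\ell/r_x)}/\ell$ rather than $Cr_x/\ell$. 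The estimate itself is the classical half-space gambler's-ruin bound and is true, but its proof needs a genuinely two-dimensional input: for instance the last-exit decomposition $\P_x(\tau_K<\tau_{\mathcal H^c})=\sum_{y\in K}g_{\mathcal H}(x,y)\,\mathrm{es}_K(y)$ combined with your own bound $g_{\mathcal H}(x,y)\le C r_x\delta(y)\|x-y\|^{-d}$ and a capacity estimate for $\sum_{y}\delta(y)\mathrm{es}_K(y)$, or an $h$-transform/iteration-over-scales argument showing that, conditioned on exiting $B(x,\ell)$ before the face, the transverse coordinate at exit is of order $\ell$. You should supply one of these or cite the estimate; as stated this is a genuine gap, though a repairable one. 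Two smaller points: in part (2) you prove the bound only for $\|u-v\|\le\frac12\|x-v\|$, whereas the lemma is stated for all $u\in\Lambda$ (the complementary regime follows by bounding $G_\Lambda^v(x)$ and $G_\Lambda^u(x)$ separately, using the half-space at $x$ together with the observation $r_x\le 3\|x-v\|$, which is forced by $r_v\le 2\|x-v\|$); and that same observation is silently used when you claim $\sup h\le Cr_xR^{1-d}$ over $B(v,R/10)$, since otherwise $\delta_{\mathcal H_x}(w)$ need not be $O(R)$.
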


We will also need the following result estimating differences of Green's functions with respect to two boxes.

\begin{cor}\label{cor:green}
    Let $\Lambda :=[a_1,b_1]\times \cdots \times [a_d,b_d]$ and let $v\in \Lambda $. Let $r\ge d(v,\Lambda ^c )/2$ and let $\Delta := \Lambda \cap (v+(-r,r)^d)$. Then: 
    \begin{enumerate}
        \item 
         For any $x\in \Lambda $ we have 
    \begin{equation}
        |G_\Lambda ^v (x)-G_\Delta ^v (x) | \le Cd(v,\Lambda ^c) r^{1-d}.
    \end{equation}
        \item 
    For any $u\in \Delta $ and $x\in \Lambda $ we have
        \begin{equation}
        | \big( G_\Lambda ^v(x)-G_\Lambda ^u(x) \big) -\big(  G_\Delta ^v(x)-G_\Delta ^u(x) \big) | \le C \|u-v\| r^{1-d}.
    \end{equation}
    \end{enumerate}
\end{cor}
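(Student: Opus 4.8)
The plan is to reduce both estimates to Lemma~\ref{lem:Green} by exploiting the monotonicity and random-walk representation of the Green's function. For part (1), the key observation is that $G_\Lambda^v(x) - G_\Delta^v(x) \ge 0$ (by domain monotonicity of the Green's function, since $\Delta \subset \Lambda$), and both sides vanish unless $x$ is, roughly speaking, visited by the walk before exiting $\Lambda$. I would use the strong Markov property at the exit time $\tau_\Delta$: a walk from $x$ that contributes to $G_\Lambda^v$ but not to $G_\Delta^v$ must, after time $\tau_\Delta$, return to $v$ before exiting $\Lambda$; so we get
\begin{equation*}
    G_\Lambda^v(x) - G_\Delta^v(x) = \E_x\big[ \mathbf{1}_{\tau_\Delta < \tau_\Lambda}\,(G_\Lambda^v(X_{\tau_\Delta}) - G_\Delta^v(X_{\tau_\Delta}))\big] \cdot(\text{const}),
\end{equation*}
or more simply, writing $h(x) := G_\Lambda^v(x) - G_\Delta^v(x)$, note that $h$ is discrete-harmonic on $\Delta \setminus\{v\}$ with $h = G_\Lambda^v$ on $\partial\Delta \cap \Lambda$ (and $h=0$ on $\partial\Lambda$), and $\Delta h(v) = 0$ as well since both Green's functions have Laplacian $-1$ at $v$; hence $h$ is harmonic on all of $\Delta$ and equals $G_\Lambda^v$ on the part of $\partial\Delta$ interior to $\Lambda$. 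Then the maximum principle gives $\|h\|_\infty \le \max_{x \in \partial\Delta \cap \Lambda} G_\Lambda^v(x)$. On that boundary set one has $\|x - v\| \asymp r$ and, crucially, $r_v = d(v,\Lambda^c) \le 2r$ and $r_v \le 2\|x-v\|$, so Lemma~\ref{lem:Green}(1) applies and gives $G_\Lambda^v(x) \le C r_x r_v \|x-v\|^{-d} \le C d(v,\Lambda^c)\, r^{1-d}$, using $r_x \le C r$. This yields (1).

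For part (2), I would apply the same harmonicity argument to the function $g(x) := \big(G_\Lambda^v(x) - G_\Lambda^u(x)\big) - \big(G_\Delta^v(x) - G_\Delta^u(x)\big)$. The point is that $g$ is discrete-harmonic away from $\{u,v\}$, and at both $u$ and $v$ the two Laplacian-$(-1)$ singularities cancel between the $\Lambda$-pair and the $\Delta$-pair, so $g$ is in fact harmonic on all of $\Delta$; on $\partial\Lambda$ it vanishes, and on $\partial\Delta \cap \Lambda$ it equals $G_\Lambda^v(x) - G_\Lambda^u(x)$. By the maximum principle, $\|g\|_\infty \le \max_{x \in \partial\Delta\cap\Lambda}|G_\Lambda^v(x) - G_\Lambda^u(x)|$, and on that boundary we again have $\|x-v\| \asymp r$, $r_v \le 2\|x-v\|$, and $r_x \le C r$, so Lemma~\ref{lem:Green}(2) gives $|G_\Lambda^v(x) - G_\Lambda^u(x)| \le C r_x \|u-v\|\,\|x-v\|^{-d} \le C\|u-v\|\, r^{1-d}$. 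This is exactly the claimed bound.

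The main obstacle is the bookkeeping around the boundary behavior: one must check carefully that the difference functions $h$ and $g$ really are harmonic across the singular vertices $v$ (and $u$) — i.e., that the $\Delta g(v)$ and $\Delta g(u)$ terms cancel — and that the only contribution to the boundary data comes from $\partial\Delta \cap \Lambda$ and not from $\partial\Delta \cap \Lambda^c$ (where the Green's functions are zero anyway). A secondary subtlety is ensuring the hypotheses of Lemma~\ref{lem:Green} are met uniformly on $\partial\Delta\cap\Lambda$: one needs $r_v \le 2\|x-v\|$ there, which follows since $x \in \partial\Delta$ forces $\|x - v\| \ge r \ge d(v,\Lambda^c)/2 = r_v/2$, and one needs $r_x \le Cr$, which holds because $\Delta$ is a box of side at most $2r$ so $r_x \le r$ for $x\in\Delta$, with an $O(1)$ adjustment for $x$ on the exterior boundary $\partial\Delta$. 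I would also remark that when $d(v,\Lambda^c)/2 > r$ is impossible by hypothesis, and when $\Delta = \Lambda$ (which happens if $r$ is large) both estimates are trivial, so one may assume $\partial\Delta \cap \Lambda \neq \emptyset$.
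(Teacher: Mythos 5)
Your argument is essentially the paper's proof in deterministic clothing: the paper couples the two walks and writes $G_\Lambda^v(x)-G_\Delta^v(x)=\E_x\big[G_\Lambda^v(X(\tau_\Delta))\big]$ (up to normalization) via the strong Markov property, which is precisely the statement that this difference is the harmonic extension to $\Delta$ of the values of $G_\Lambda^v$ outside $\Delta$; both routes then conclude by the same application of Lemma~\ref{lem:Green} at points $x$ with $\|x-v\|\ge r\ge r_v/2$. Two pieces of bookkeeping in your write-up need repair, though both are one-line fixes. First, your justification of $r_x\le Cr$ on $\partial\Delta\cap\Lambda$ conflates $d(x,\Delta^c)$ with $d(x,\Lambda^c)$: in Lemma~\ref{lem:Green} applied to $G_\Lambda^v$, the quantity $r_x$ is the distance to $\Lambda^c$, and a point of $\partial\Delta$ may sit deep in the interior of $\Lambda$, so ``$\Delta$ has side at most $2r$'' proves nothing about $r_x$. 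The correct argument is the triangle inequality $r_x\le r_v+\|x-v\|\le 2r+\|x-v\|\le 3\|x-v\|$, which together with $\|x-v\|\ge r$ gives $G_\Lambda^v(x)\le Cr_xr_v\|x-v\|^{-d}\le Cr_v\,r^{1-d}$ (this is exactly the paper's computation). Second, the maximum principle on $\Delta$ only bounds $h$ and $g$ for $x\in\Delta$, whereas the claim is for all $x\in\Lambda$; for $x\in\Lambda\setminus\Delta$ one has $G_\Delta^v(x)=G_\Delta^u(x)=0$, so $h(x)=G_\Lambda^v(x)$ and $g(x)=G_\Lambda^v(x)-G_\Lambda^u(x)$, and since $\|x-v\|\ge r$ there, the same application of Lemma~\ref{lem:Green} (parts (1) and (2) respectively, with the triangle-inequality bound on $r_x$ above) yields the stated estimates directly. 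With these two corrections your proof is complete and coincides in substance with the paper's.
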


\begin{proof}
We start with the first part. We may couple the walks in the definition of $G_\Lambda ^v(x)$ and $G_\Delta ^v(x)$ to be the same walk $X_t$. This gives 
\begin{equation}
\begin{split}
    G_\Lambda ^v(x)-G_\Delta ^v(x)= \frac{1}{2d} \cdot \E_x\left[ \big| \big\{ t\in (\tau _\Delta , \tau _{\Lambda }] : X_t=v\big\} \big| \right]=\frac{1}{2d} \cdot  \mathbb E _x[G_\Lambda ^v(X(\tau _\Delta )) ] \\
    \le C d(v,\Lambda ^c)  \mathbb E _x[  \|X(\tau _{\Delta })-v\|^{1-d} ] \le Cd(v,\Lambda ^c) r^{1-d},
\end{split}
\end{equation}
where in the last inequality we used that $X(\tau _{\Delta }) \notin \Delta $ and therefore $\|X(\tau _{\Delta})-v\| \ge r$, and in the second to last inequality we used Lemma~\ref{lem:Green} and the fact that $d(X(\tau _{\Delta }),\Lambda ^c) \le \|X(\tau _{\Delta})-v\|+d(v,\Lambda ^c) \le  \|X(\tau _{\Delta})-v\|+2r \le 3\|X(\tau _{\Delta})-v\|$. Note that in this proof if $x\in \Lambda \setminus \Delta $ then $X(\tau _\Delta )=x$ and all the bounds stated still hold.  This finishes the proof of the first part.

For the second part, we similarly have that
\begin{equation*}
    \big( G_\Lambda ^v(x)-G_\Lambda ^u(x) \big) -\big(  G_\Delta ^v(x)-G_\Delta ^u(x) \big)= \frac{1}{2d}\cdot  \mathbb E _x  \big[ G_\Lambda ^v(X(\tau _\Delta )) -G_\Lambda ^u(X(\tau _\Delta ))  \big]\le C\|u-v\| r^{1-d},
\end{equation*}
where the last inequality is by the second part of Lemma~\ref{lem:Green}.
\end{proof}
\subsection{Upper bound on height fluctuations}
\subsubsection{Dimensions $d\ge 5$}

In this section we prove the upper bound in \eqref{eq:loweboundheight d>4} from Theorem \ref{thm:main d>4}. Define
\begin{equation}
    q(h):= \max _{v\in \Lambda } \mathbb P \big( \|\varphi ^\eta _v \| \ge h \big).
\end{equation}

Our goal will be to prove the following recursive bound on the function $q$.
\begin{lem}\label{lem:loc2}
    Suppose that $d\ge 5$. Then, for any $r\ge 1$ we have 
    \begin{equation}
        q(r) \le C\exp(-cr^{4-2H})  +C\sum _{j=1}^{\lceil \log _2 L\rceil } 2^{jd} q(2^{j/8}r)  
    \end{equation}
\end{lem}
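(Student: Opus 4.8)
The target is a recursive inequality for $q(h) = \max_{v\in\Lambda_L}\P(\|\varphi_v^\eta\|\ge h)$ in dimensions $d\ge 5$. The natural route is to apply Lemma~\ref{lem:fluctuation and concentration} (concentration for linear functionals of the minimal surface) to a carefully chosen shift function $s$ that extracts the value $\varphi_v^\eta$ at a fixed vertex $v$. The obvious candidate is $s = G_\Lambda^v \cdot e$ (the Green's function times a unit vector $e\in\R^n$, applied to control one coordinate; by codimension symmetry and a union bound over the $n$ coordinates this suffices, since $\P(\|\varphi_v\|\ge r)\le n\max_i\P(|(\varphi_v)_i|\ge r/\sqrt n)$). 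Indeed, since $-\Delta_\Lambda G_\Lambda^v = \delta_v$ on $\Lambda$, we have $(\varphi^\eta, -\Delta_\Lambda s) = (\varphi_v^\eta)_i$ up to the orthogonal choice of $e$, so $\P(|(\varphi^\eta,-\Delta_\Lambda s)|\ge r)$ is exactly what we want to bound by $q(r)$-type quantities.

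**Key steps.** First, compute $\|\nabla s\|_\Lambda^2 = (s,-\Delta_\Lambda s) = G_\Lambda^v(v) \le C$ (bounded in $d\ge 3$, and here $d\ge 5$), which is the crucial gain: the Dirichlet energy of the Green's function is $O(1)$. Second, estimate $\|s\|_{2H}^{2H} = \sum_{u\in\Lambda}|G_\Lambda^v(u)|^{2H}$; using the decay $G_\Lambda^v(u)\asymp \|u-v\|^{2-d}$ from Lemma~\ref{lem:Green} (after handling the near-diagonal $O(1)$ terms and the boundary via Corollary~\ref{cor:green}), this sum is $\sum_u \|u-v\|^{(2-d)\cdot 2H} $, which converges (is $O(1)$) precisely when $(d-2)\cdot 2H > d$, i.e. $d > \tfrac{2}{2H-1}$ — this does NOT hold for all $H$, so one must instead keep the sum and note it is at most $C\sum_{k\ge 1}k^{d-1}k^{(2-d)2H}$, bounded when $d>\tfrac{2}{2H-1}$ and otherwise growing polynomially; in any case $\|s\|_{2H}^{2H}$ and $|\supp(s)| = |\Lambda|$ contribute, so $D(s) = \|\nabla s\|_\Lambda^{4-4H}\|s\|_{2H}^{2H} + \|\nabla s\|_\Lambda^4|\supp(s)| \le C|\Lambda_L| \le CL^d$ crudely (the $|\supp(s)|$ term dominates). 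Third, plug into Lemma~\ref{lem:fluctuation and concentration}: for any $h\ge r$,
\begin{equation*}
\P\big(\|\varphi_v^\eta\|\ge r\big) \le C\exp\!\Big(-\frac{cr^4}{h^{2H}L^d}\Big) + 3\P\Big(\max_{u\in\Lambda_L}\|\varphi_u^\eta\|\ge h\Big).
\end{equation*}
Choosing $h = r$ is far too weak because of the $L^d$; the point is to choose $h$ comparable to $r$ times a small power of $2$ and then \emph{localize the support of $s$}: replace $G_\Lambda^v$ by $G_\Delta^v$ for $\Delta = \Lambda\cap(v+(-\rho,\rho)^d)$ a box of side $\rho$ around $v$, incurring (by Corollary~\ref{cor:green}) an additive error $\|\nabla(G_\Lambda^v - G_\Delta^v)\|^2 \le C d(v,\Lambda^c)\rho^{1-d}$ in the energy and controllable errors elsewhere; with $|\supp(s)| \le C\rho^d$ we get $D(s)\le C\rho^d$, and optimizing the exponent $\tfrac{r^4}{h^{2H}\rho^d}$ against the constraint coming from the error term forces the choice $\rho \asymp r^{?}$, producing the stretched-exponential term $C\exp(-cr^{4-2H})$ and the sum over dyadic scales $2^j$ with the weight $2^{jd}$ (from summing $\P(\max_{u\in\Delta_j}\|\varphi_u\|\ge h)$ over the $\asymp 2^{jd}$ translates of a box of the next scale, each contributing $\le q(h)$) and the argument $2^{j/8}r$ (the $1/8$ being exactly $\tfrac{1}{4-4H}\cdot$ something, i.e. the power that makes $\tfrac{r^4}{(2^{j/8}r)^{2H} 2^{jd}}$ still a positive power of $2^j$ times $r^{4-2H}$ when $d\ge 5$ — this is where $d\ge 5$ enters, ensuring $4 - 2H\cdot\tfrac18\cdot\text{(exponent)} - d\cdot(\ldots) $ keeps the sum summable).

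**Main obstacle.** The delicate point is the bookkeeping of the \emph{three} error sources when passing from $G_\Lambda^v$ to the localized $G_\Delta^v$ at dyadic scale $\rho = 2^j$: (i) the Dirichlet-energy error $d(v,\Lambda^c)\rho^{1-d}$ from Corollary~\ref{cor:green}(1), (ii) the change in the linear functional $(\varphi^\eta, -\Delta_\Delta G_\Delta^v) - (\varphi^\eta,\delta_v)$, which is not exactly $\varphi_v$ but involves boundary contributions of $\Delta$, and (iii) the tradeoff in Lemma~\ref{lem:fluctuation and concentration} between the $\exp(-cr^4/(h^{2H}D(s)))$ term and the $\P(\max\ge h)$ term, where $h$ must be taken of order $2^{j/8}r$ to balance. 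I expect the cleanest execution is: fix the dyadic scale $2^j$, set $s = G_{\Delta_j}^v e$ with $\Delta_j$ the box of side $2^j$ around $v$ (capped by $\Lambda_L$), so $\|\nabla s\|^2\le C$, $D(s)\le C 2^{jd}$, $\|s\|_{2H}^{2H}\le C2^{j(d - 2H(d-2))_+}$ (subsumed by $2^{jd}$); apply the lemma with $h = 2^{j/8}r$ to bound the contribution of that scale by $C\exp(-cr^{4-2H})+ C\,\P(\max_{u\in\Delta_j}\|\varphi_u^\eta\|\ge 2^{j/8}r)$, then bound the latter by $C2^{jd}q(2^{j/8}r)$ via a union bound over the $\le C2^{jd}$ vertices; finally sum $j$ from $1$ to $\lceil\log_2 L\rceil$ and absorb the geometric sum of $\exp(-cr^{4-2H})$ terms into a single such term. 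The one genuinely non-routine check is that the exponent $4$ in $r^4$ and the weight $2^{j/8}$ are compatible with $d\ge 5$ so that $\exp(-c\,2^{-jH/4}\cdot 2^{-jd}\cdot(\text{stuff})\cdot r^4)$, after the reparametrization $h=2^{j/8}r$, yields a scale-uniform bound $\exp(-cr^{4-2H})$ — i.e. that $4 - 2H/8 - d$ is handled correctly by choosing the localization scale $\rho \asymp r^{1/(\text{something})}$ before dyadically decomposing what remains; getting the arithmetic of these exponents to close is the heart of the lemma.
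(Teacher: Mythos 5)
There is a genuine gap, and it sits exactly where you flagged uncertainty: the arithmetic of the exponents does not close with the shift functions you propose. If you take $s=G_{\Delta_j}^v e$ for a single box $\Delta_j$ of side $2^j$ around $v$, then $\|\nabla s\|_\Lambda^2=G_{\Delta_j}^v(v)\asymp 1$ and $|\supp(s)|\asymp 2^{jd}$, so $D(s)\gtrsim 2^{jd}$ \emph{grows} with the scale. Feeding this into Lemma~\ref{lem:fluctuation and concentration} with $h=2^{j/8}r$ gives an exponential term of the form $\exp\bigl(-c\,r^{4-2H}2^{-j(d+H/4)}\bigr)$, which degrades as $j$ increases; at the top scale $j\approx\log_2L$ it is worthless, and no choice of a single localization radius $\rho$ rescues this, because the $\|\nabla s\|_\Lambda^4|\supp(s)|$ term in $D(s)$ is at least of order $\rho^d$ whenever $\|\nabla s\|_\Lambda^2\asymp 1$. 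In addition, $(\varphi^\eta,-\Delta_{\Delta_j}G_{\Delta_j}^v e)$ is not $\varphi_v^\eta\cdot e$ (there are uncontrolled contributions from $\partial\Delta_j$), and your sketch never resolves this; summing the single-scale bounds over $j$ does not reassemble the quantity you want to control.

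The missing idea is to \emph{telescope}: set $s_j:=(G_{\Lambda_j}^v-G_{\Lambda_{j-1}}^v)e$ with $\Lambda_j=\Lambda_L\cap(v+(-2^j,2^j)^d)$ (and $s_1=G_{\Lambda_1}^ve$), so that $\sum_{j\le m}s_j=G_{\Lambda_L}^ve$ and hence $\varphi_v^\eta\cdot e=\sum_j(\varphi^\eta,-\Delta s_j)$ exactly, with no boundary leftovers. The difference of Green's functions at consecutive scales satisfies $\|s_j\|_\infty\le C2^{j(2-d)}$ and $\|\nabla s_j\|_\Lambda^2\le C2^{j(2-d)}$ (Corollary~\ref{cor:green} and a computation on $\partial\Lambda_{j-1}$), whence $D(s_j)\le C2^{j(4-d)}\le C2^{-j}$ for $d\ge5$ — geometrically \emph{decaying}, not growing. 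This is what lets one apply Lemma~\ref{lem:fluctuation and concentration} at scale $j$ with $r\mapsto c_02^{-j/8}r$ and $h\mapsto 2^{j/8}r$, obtain $\exp(-cr^{4-2H}2^{j/4})+C2^{jd}q(2^{j/8}r)$, and sum over $j$ with the deviations $c_02^{-j/8}r$ adding up to at most $r$. Your write-up has all the right ingredients (the Green's function, the dyadic scales, the choice $h=2^{j/8}r$, the $2^{jd}$ union bound), but without the telescoped differences the central exponential term cannot be made scale-uniform, so the recursion as stated is not established.
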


For the proof of Lemma~\ref{lem:loc2}, we fix $v\in \Lambda _L$ and a unit vector $e\in \mathbb R ^n$, and bound the fluctuations of $| \varphi ^\eta _v \cdot e |$. We do this by analyzing separately the contribution from each dyadic scale. To this end, define 
\begin{equation*}
\Lambda _j:=\Lambda_L  \cap (v+(-2^j,2^j)^d)
\end{equation*}
and let $m$ be the first integer for which $\Lambda _m=\Lambda _L$ (so that $m\approx \log _2L$). For $1\le j\le m$ we consider the Green's functions $G_j^v(x):=G_{\Lambda _j}^v(x)$. Finally, for $2\le j \le m$ define the shift functions 
\begin{equation}\label{def:s_j}
s_j(x)=s_j^v(x):=(G_j^v(x)-G_{j-1}^v(x) )e    
\end{equation}
and for $j=1$ define  $s_1(x)=s_1^v(x):=G_1^v(x)e$. Recall the function $D(s)$ of a shift $s$ defined in Lemma~\ref{lem:fluctuation and concentration}.  We start with the following claim which bounds $D(s_j)$ and will be useful in any dimension.

\begin{claim}\label{claim:D(s)}
    Let $d\ge 1$ and $v\in\Lambda _L$. Set $r_v:=d(v,\Lambda _L^c)$. For all $1\le j \le m$ we have 
    \begin{equation}
        D(s_j^v) \le C \min \big( 2^{j(4-d)} ,r_v^{4-2H} 2^{j(2H-d)}\big).
    \end{equation}
\end{claim}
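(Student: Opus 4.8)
The quantity $D(s_j^v) = \|\nabla s_j\|_{\Lambda_L}^{4-4H}\|s_j\|_{2H}^{2H} + \|\nabla s_j\|_{\Lambda_L}^4|\supp(s_j)|$ is controlled by three ingredients: the Dirichlet energy $\|\nabla s_j\|_{\Lambda_L}^2$, the size of the support, and the weighted $\ell^{2H}$ norm $\|s_j\|_{2H}^{2H}$. I would estimate each one separately, using the Green's function bounds from Lemma~\ref{lem:Green} and Corollary~\ref{cor:green}, and then assemble them into the claimed minimum of $2^{j(4-d)}$ and $r_v^{4-2H}2^{j(2H-d)}$.

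\textbf{Step 1: the Dirichlet energy.} The key algebraic fact is the discrete Green's identity: since $s_j = (G_j^v - G_{j-1}^v)e$ (for $j\ge 2$; $s_1 = G_1^v e$), and $G_j^v$ is harmonic off $\{v\}\cup\partial\Lambda_j$ with $\Delta G_j^v(v) = -1$, one computes $\|\nabla s_j\|_{\Lambda_L}^2 = (s_j, -\Delta_{\Lambda_L} s_j)$. Because $s_j$ vanishes outside $\Lambda_{j}$ and the two Green's functions have the same Laplacian at $v$, the Laplacian difference is supported on the boundaries $\partial\Lambda_{j-1}$ and $\partial\Lambda_j$, and one gets $\|\nabla s_j\|_{\Lambda_L}^2 \le C(G_j^v(v) - G_{j-1}^v(v))$ (up to boundary contributions which are of the same order). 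Since $d\ge 5 > 2$, the Green's function $G_j^v(v)$ is bounded above by an absolute constant (it converges to the infinite-volume Green's function), so one only gets $\|\nabla s_j\|_{\Lambda_L}^2\le C$, which is too weak by itself — the decay in $j$ must come from elsewhere. Actually the correct route: for $j$ with $2^{j}\le r_v$ (so $\Lambda_j$ is a genuine box of side $2^{j+1}$ around $v$ not touching $\Lambda_L^c$), by translation invariance $G_j^v(v) - G_{j-1}^v(v)$ is the probability-weighted extra visits to $v$ made between exiting the smaller box and exiting the larger, which is $\Theta(2^{j(2-d)})$ for $d>2$; for $2^j > r_v$ one instead uses Lemma~\ref{lem:Green}(1) to see the Green's function is already $O(r_v 2^{j(1-d)})$ near the boundary scale. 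Either way $\|\nabla s_j\|_{\Lambda_L}^2\le C\min(2^{j(2-d)}, r_v 2^{j(1-d)})$ — wait, I need to be careful; let me instead aim directly for what is needed.

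\textbf{Step 2: support and $\ell^{2H}$ norm.} We have $\supp(s_j)\subset\Lambda_j$, so $|\supp(s_j)|\le C2^{jd}$. For $\|s_j\|_{2H}^{2H} = \sum_x |G_j^v(x) - G_{j-1}^v(x)|^{2H}$, split the sum over $\Lambda_j\setminus\Lambda_{j-1}$ (where $G_{j-1}^v\equiv 0$ and $G_j^v(x) = O(r_x 2^{j(1-d)})$ by Lemma~\ref{lem:Green}(1), noting $r_v\le 2^{j+1}$ forces $r_v\le 2\|x-v\|$ on the annulus) and over $\Lambda_{j-1}$ (where one uses Corollary~\ref{cor:green}(1) with the role of the bigger box played by $\Lambda_j$ and $\Delta = \Lambda_{j-1}$, giving $|G_j^v(x) - G_{j-1}^v(x)|\le C\,d(v,\Lambda_j^c)\,2^{j(1-d)}\le C2^{j(2-d)}$ uniformly, or Lemma~\ref{lem:Green} bounds depending on the distance of $x$ to $v$). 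Summing $|\cdot|^{2H}$ over the $O(2^{jd})$ points and tracking the worst case gives a bound of the form $\|s_j\|_{2H}^{2H}\le C2^{jd}\cdot 2^{j(2-d)\cdot 2H}$ or better once one also exploits the $\min$ with $r_v$; more precisely I'd show $\|s_j\|_{2H}^{2H}\le C\min(2^{j(d + 2H - dH)\cdot(\text{something})}, \ldots)$ — the cleanest is to directly plug the two regimes into $D$.

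\textbf{Step 3: assembling $D(s_j)$, and the main obstacle.} Plugging $\|\nabla s_j\|_{\Lambda_L}^2 \le C\min(2^{j(2-d)}, r_v^{2}2^{-jd}\cdot(\ldots))$ and the support/norm bounds into the definition of $D$, the term $\|\nabla s_j\|^4|\supp(s_j)|\le C\min(2^{4j(2-d)}, \ldots)\cdot 2^{jd} = C\min(2^{j(8 - 4d + d)}, \ldots) = C2^{j(8-3d)}$ — hmm, this is not obviously $\le C2^{j(4-d)}$; for $d\ge 5$, $8 - 3d \le -7 < 4 - d$, so it IS smaller, good. Similarly the $\|\nabla s_j\|^{4-4H}\|s_j\|_{2H}^{2H}$ term must be checked to fall below $C2^{j(4-d)}$ and $Cr_v^{4-2H}2^{j(2H-d)}$ in the two regimes $2^j\lesssim r_v$ and $2^j\gtrsim r_v$. \emph{The main obstacle is bookkeeping the $\min$ correctly}: one must track that below scale $r_v$ the Green's function differences behave as in the bulk (powers of $2^j$ only), while above scale $r_v$ the boundary of $\Lambda_L$ suppresses them by a factor involving $r_v$, and then verify the two claimed upper bounds $2^{j(4-d)}$ and $r_v^{4-2H}2^{j(2H-d)}$ are each dominated in the appropriate regime (equivalently, one shows $D(s_j)\le C2^{j(4-d)}$ always, and $D(s_j)\le Cr_v^{4-2H}2^{j(2H-d)}$ using the extra $r_v$-decay; the $\min$ then follows). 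I expect the exponent arithmetic to work out exactly because these are the Flory scaling exponents, but the careful casework distinguishing $x$ near $v$, $x$ in the bulk annulus, and $x$ near $\partial\Lambda_L$ is where the real work lies. I would also record that for $d\le 4$ (needed later in the paper for the other theorems) the same computation gives the same bound, with the only change being that $\|\nabla s_j\|^2$ no longer decays (it is $\Theta(j)$ or $\Theta(2^{j(2-d)})$ with $2-d\ge -2$), which is why the bound is stated uniformly in $d\ge 1$.
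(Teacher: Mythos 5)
Your plan coincides with the paper's proof in outline: bound $\|\nabla s_j\|_\Lambda^2$, $|\supp(s_j)|$ and $\|s_j\|_{2H}^{2H}$ separately in the two regimes $2^j\lesssim r_v$ and $2^j\gtrsim r_v$, then substitute into $D$. But the content of the claim is precisely the verification you defer, and the two places where you do attempt it go wrong. First, the crucial estimate in the outer regime $2^j>r_v$ is $\|\nabla s_j\|_\Lambda^2\le Cr_v^2\,2^{-dj}$, with \emph{two} factors of $r_v$; your tentative $r_v\,2^{j(1-d)}$ is weaker by the factor $2^j/r_v\ge1$ and does not suffice: already the term $\|\nabla s_j\|_\Lambda^4|\supp(s_j)|$ would then only be bounded by $r_v^2\,2^{j(2-d)}$, which exceeds the target $r_v^{4-2H}2^{j(2H-d)}$ exactly when $2^j>r_v$. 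The correct bound comes from writing $\|\nabla s_j\|_\Lambda^2=-\sum_{x\in\partial\Lambda_{j-1}\setminus\partial\Lambda_L}G_j^v(x)\,\Delta G_{j-1}^v(x)$ (the contributions at $v$ cancel exactly, and $s_j$ vanishes on $\partial\Lambda_j$) and pairing, via Lemma~\ref{lem:Green}, $|G_j^v(x)|\le Cr_v2^{j(1-d)}$ with $|\Delta G_{j-1}^v(x)|=|G_{j-1}^v(y)|\le Cr_v2^{-dj}$ over the $C2^{j(d-1)}$ boundary points; each factor contributes an $r_v$. In Step 3 you simply write $r_v^2 2^{-jd}\cdot(\ldots)$ without deriving it.

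Second, the assembly contains an arithmetic error: $\|\nabla s_j\|_\Lambda^4$ means $(\|\nabla s_j\|_\Lambda^2)^2$, so in the bulk regime $\|\nabla s_j\|_\Lambda^4|\supp(s_j)|\le C2^{2j(2-d)}2^{jd}=C2^{j(4-d)}$, which is exactly the target. Your $2^{4j(2-d)}\cdot 2^{jd}=2^{j(8-3d)}$ is not a valid consequence of $\|\nabla s_j\|_\Lambda^2\le C2^{j(2-d)}$ (since $2^{j(2-d)}\le 1$ for $d\ge2$, the fourth power is \emph{smaller} than the square, so the inequality cannot be deduced), and it also fails to prove the claim for $d=1$, where $2^{j(8-3d)}=2^{5j}>2^{3j}=2^{j(4-d)}$; the claim is stated for all $d\ge1$ and is used in the paper for $d\le 4$ as well. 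Finally, the cross term $\|\nabla s_j\|_\Lambda^{4-4H}\|s_j\|_{2H}^{2H}$, which is where the exponent $4-2H$ and the factor $r_v^{4-2H}$ actually arise (one needs $\|s_j\|_{2H}^{2H}\le Cr_v^{2H}2^{dj+2H(1-d)j}$ in the outer regime, from the sup bound $\|s_j\|_\infty\le Cr_v2^{j(1-d)}$ of Corollary~\ref{cor:green}), is never checked. As written, the proposal is a correct roadmap with the decisive computations either missing or incorrect.
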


\begin{proof}
    Let $m':=\lfloor \log _2r_v \rfloor -1$.  It suffices to prove that 
    \begin{equation}\label{eq:sj}
        \|s_j\|_{2H}^{2H} \le C\begin{cases} 2^{dj+2H(2-d)j} & \ \, 1\le j\le m' \\ r_v^{2H}2^{dj+2H(1-d)j} & m'<j \le m\end{cases}   , \quad  \|\nabla s_j\|_\Lambda ^2 \le C\begin{cases}2^{j(2-d)} & \ \, 1\le j\le m' \\ r_v^22^{-dj} & m'<j \le m\end{cases}. 
    \end{equation}
    Indeed, we have that  $|\supp (s_j)| \le C2^{jd}$ and therefore for $m'<j\le m$
        \begin{equation}
        \begin{split}
             D(s_j)& =\|\nabla s _j\|_\Lambda ^{4-4H} \|s_j\|_{2H}^{2H} + \|\nabla s_j\|_\Lambda ^4 |\supp (s_j)| \\
             &\le C( r_v^22^{-dj})^{2-2H}r_v^{2H}2^{dj+2H(1-d)j}+C(r_v^22^{-dj})^22^{jd} \\
             &\le Cr_v^{4-2H} 2^{j(2H-d)}+Cr_v^42^{-jd}\le  Cr_v^{4-2H} 2^{j(2H-d)}
        \end{split}
    \end{equation}
    where we used in the last inequality that $r_v\le 2^{m'+1}\le 2^j$. The computation for $1\le j\le m'$ follows by replacing $r_v$ by $2^j $ in the previous computation. Hence, 
    \begin{equation}
        D(s_j)  \le  C  \min \big( 2^{j(4-d)} ,r_v^{4-2H} 2^{j(2H-d)}\big).
    \end{equation}
    We turn to prove \eqref{eq:sj}. By the first part of Corollary~\ref{cor:green} we have $\max _{x}\|s_j(x)\|\le C2^{j(2-d)}$ when $j\le m'$ and $\max _{x}\|s_j(x)\|\le Cr_v2^{j(1-d)}$ when $m'<j\le m$ therefore the first estimate in \eqref{eq:sj} follows (recall that $\|s\|_{2H}^{2H}:=\sum _{v\in \Lambda } \|s_v\|^{2H}$).
    
    Next, we estimate the Dirichlet energy of $s_j$. For all $1<j\le m$ we have that 
    \begin{equation}\label{eq:laplacian norm}
        \|\nabla s_j\|^2_{\Lambda } = (s_j,-\Delta s_j)=\big( G_j^v-G_{j-1}^v,\Delta (G_j^v-G_{j-1}^v) \big)=-\!\!\! \sum _{x\in \partial \Lambda _{j-1}\setminus\partial \Lambda } \! G_j^v(x) \Delta G_{j-1}^v(x),
    \end{equation}
    where in the last equality we used that $\Delta (G_j^v-G_{j-1}^v)$ is supported on $\partial \Lambda _j \cup \partial \Lambda _{j-1}$ and that $G_j$ is supported on $\Lambda _j$ and $G_{j-1}$ is supported on $\Lambda _{j-1} $.  We would like to use Lemma~\ref{lem:Green} in order to bound the right hand side of \eqref{eq:laplacian norm} and to consider separately the cases $1<j\le m'$ and $m'<j \le m$. When $1<j\le m'$, by Lemma~\ref{lem:Green}, for all $x\in \partial \Lambda _{j-1} $ we have $|G_j^v(x)| \le C2^{j(2-d)}$ and $|\Delta G_{j-1}(x)| = | G_{j-1}(y)|\le C2^{j(1-d)}$, where $y\sim x$ is the unique neighbour of $x$ in $\Lambda _{j-1}$. Similarly, when $m'<j\le m$, for all $x\in \partial \Lambda _{j-1}\setminus \partial \Lambda $ we have $|G_j^v(x)| \le Cr_v2^{j(1-d)}$ and $|\Delta G_{j-1}(x)| = | G_{j-1}(y)|\le Cr_v2^{-dj}$. Substituting these estimates into \eqref{eq:laplacian norm} finishes the proof of the second estimate in \eqref{eq:sj} when $1<j\le m$. This estimate clearly holds in the case $j=1$. 
\end{proof}

\begin{proof}[Proof of Lemma~\ref{lem:loc2}]
Fix $v\in \Lambda _L$ and recall the definition of $s_j=s_j^v$ in \eqref{def:s_j}. We have that $G_m^ve=\sum _{j=1}^ms_j^v$ and therefore
    \begin{equation}\label{eq:phi i}
        \varphi ^{\eta } _v \cdot e=(\varphi ^\eta ,-\Delta G_m^ve)= \sum _{j=1}^m (\varphi^ \eta ,- \Delta s_j).
    \end{equation}
Next, we bound the fluctuations of each of the terms on the right hand side of \eqref{eq:phi i}. To this end, let $c_0$ be a constant so that $c_0\sum _{j=1}^{\infty }d2^{-j/8}<1$. By Claim~\ref{claim:D(s)} we have that $D(s_j)\le C2^{j(4-d)}\le C2^{-j}$ and therefore by Lemma~\ref{lem:fluctuation and concentration} with $r$ being $c_02^{-j/8}r$ and  $h$ being $2^{j/8}r$ we have
\begin{equation}
\begin{split}
 \P\left(\left|(\varphi^{\eta},-\Delta s_j)\right|\ge c_02^{-j/8}r \right) &\le C\exp \big(-cr^{4-2H} 2^{j/4} \big)+ 3\mathbb P \big(  \max _{v\in \Lambda _j} \|\varphi ^\eta _v\| \ge 2^{j/8}r  \big) \\
 &\le C\exp \big(-cr^{4-2H} 2^{j/4} \big)+ C2^{dj}q(2^{j/8}r),
\end{split}
\end{equation}
where in the first inequality we also used that $H\le 1$. Hence, by the choice $c_0$ we have 
\begin{equation}
\begin{split}
    \mathbb P \big( | \varphi ^\eta  _v \cdot e | \ge r/d \big) &\le \sum _{j=1}^m \P\left(\left|(\varphi^{\eta},-\Delta s_j)\right|\ge c_02^{-j/8}r \right) \\
    &\le C\sum _{j=1}^m \exp \big(-cr^{4-2H} 2^{j/4} \big) +C \sum _{j=1}^m 2^{dj}q(2^{j/8}r) \\
    &\le C\exp \big(-cr^{4-2H} \big) +C \sum _{j=1}^m 2^{dj}q(2^{j/8}r).
\end{split}
\end{equation}
This finishes the proof of the lemma using a union bound over the coordinates.
\end{proof}

\begin{proof}[Proof of the upper bound in \eqref{eq:loweboundheight d>4} from Theorem \ref{thm:main d>4}]
    Fix $r\ge 1$. One can check, using induction on $\ell >0$ that 
    \begin{equation}\label{eq:3}
        q(r)\le \sum _{i=0}^\ell C^{i+1} 2^{2id} \exp (-c(r2^{i/8})^{4-2H}) +\sum _{j=1}^{ \lceil \log _2L \rceil } C^{\ell +1 } 2^{d(j+2\ell )}q(r2^{(j+\ell )/8}),
    \end{equation}
    where $C,c$ are the constants from Lemma~\ref{lem:loc}. Indeed, the case $\ell =0$ follows from Lemma~\ref{lem:loc} and the induction step follows by substituting the bound in Lemma~\ref{lem:loc} instead of  $q(r2^{(\ell+1)/2})$ appearing in the term corresponding to $j=1$ on the right hand side of \eqref{eq:3}. 

    Next, note that the first sum on the right hand side of \eqref{eq:3} converges and that $q$ is monotonically decreasing. Thus, by slightly changing $C,c$ we obtain for all $\ell\ge 0$ that
    \begin{equation}\label{eq:4}
        q(r)\le C \exp (-cr^{4-2H})+CL^C 2^{2d\ell}q(r2^{\ell /8}). 
    \end{equation}
    It follows from Corollary~\ref{cor:maxphi} that the second term in the bound \eqref{eq:4} tends to $0$ as $\ell \to \infty$ and therefore we obtain the bound  $q(r)\le C \exp (-cr^{4-2H})$.
\end{proof}

\subsubsection{Dimension $d=4$}

In this section we prove the upper bound in \eqref{eq:bound h d=4} from Theorem \ref{thm: main 4}. When $d=4$ we prove the following recursive estimate on the function $q$.

\begin{lem}
    Suppose that $d=4$. Then, for all $r>1$ we have
        \begin{equation}
        q(r) \le C(\log L)\exp(-cr^{4-2H} /\log ^4L)   +C L^{d} (\log L ) q(2r)  
    \end{equation}
\end{lem}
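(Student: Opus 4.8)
The argument mirrors the proof of Lemma~\ref{lem:loc2} for $d\ge 5$, decomposing the height $\varphi^\eta_v\cdot e$ into dyadic scales via $G_m^v e=\sum_{j=1}^m s_j^v$ and applying Lemma~\ref{lem:fluctuation and concentration} to each piece. The essential difference is that in dimension $d=4$ Claim~\ref{claim:D(s)} only gives $D(s_j)\le C2^{j(4-d)}=C$ (the bound no longer decays in $j$), so we lose the geometric decay $2^{-j}$ that was exploited for $d\ge 5$ and must instead split the error budget $r$ evenly across the $m\approx\log_2 L$ scales. Concretely, I would fix $v\in\Lambda_L$ and a unit vector $e\in\R^n$, write $\varphi^\eta_v\cdot e=\sum_{j=1}^m(\varphi^\eta,-\Delta s_j)$ as in~\eqref{eq:phi i}, and apply Lemma~\ref{lem:fluctuation and concentration} to the $j$-th term with the parameter choices $r\mapsto cr/\log L$ (so that the contributions sum to at most $r$ up to a constant) and $h\mapsto 2r$.

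\textbf{Key steps.} First, from Claim~\ref{claim:D(s)} with $d=4$ we have $D(s_j)\le C$ uniformly in $j$. Second, Lemma~\ref{lem:fluctuation and concentration} with $r$ replaced by $c_0 r/\log L$ and $h=2r$ (legitimate since $h\ge r\ge c_0 r/\log L$ for $r>1$ and $L$ large, and one handles small $L$ trivially by adjusting constants) yields, for each $1\le j\le m$,
\begin{equation}
\P\Big(\big|(\varphi^\eta,-\Delta s_j)\big|\ge \frac{c_0 r}{\log L}\Big)\le C\exp\Big(-\frac{c\,r^4}{(2r)^{2H}\,D(s_j)\,\log^4 L}\Big)+3\,\P\Big(\max_{u\in\Lambda_j}\|\varphi^\eta_u\|\ge 2r\Big).
\end{equation}
Using $D(s_j)\le C$, $(2r)^{2H}\le Cr^{2H}$ (as $H<1$, $r>1$), and $|\Lambda_j|\le CL^d$ together with a union bound over $u\in\Lambda_j$, the right-hand side is at most $C\exp(-cr^{4-2H}/\log^4 L)+CL^d q(2r)$. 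Third, choosing $c_0$ so that $c_0\sum_{j=1}^m (1/\log L)\le c_0 \cdot m/\log L<1$ (which holds since $m\le \lceil\log_2 L\rceil\le C\log L$, so $c_0$ absorbs this constant), a union bound over the $m$ scales gives
\begin{equation}
\P\big(|\varphi^\eta_v\cdot e|\ge r\big)\le \sum_{j=1}^m \P\Big(\big|(\varphi^\eta,-\Delta s_j)\big|\ge \frac{c_0 r}{\log L}\Big)\le C(\log L)\exp\big(-cr^{4-2H}/\log^4 L\big)+C(\log L)L^d q(2r).
\end{equation}
Finally, a union bound over the $n$ coordinate directions $e$ (absorbing $n$ into the constants, as the statement permits) and over $v\in\Lambda_L$ (absorbing another factor $L^d$, already dominated by the $L^d(\log L)$ factor after relabeling) converts this into the stated bound on $q(r)$.

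\textbf{Main obstacle.} The delicate point is the bookkeeping of the error budget: since $D(s_j)$ does not decay, we cannot afford more than a $1/m\sim 1/\log L$ fraction of $r$ per scale, which is precisely what produces the $\log^4 L$ in the exponent (the fourth power comes from the $r^4$ in the numerator of Lemma~\ref{lem:fluctuation and concentration} combined with the $(r/\log L)^4$ substitution) and the overall $\log L$ prefactors. One must also verify that the secondary term from the Gaussian tail $\mathcal A^+$ in Lemma~\ref{lem:fluctuation and concentration} — which contributes $C\exp(-cr^{4-2H}/(\|\nabla s_j\|^{4-4H}\|s_j\|_{2H}^{2H}\log^{4-2H}L))$ after the substitution — is dominated by the main term; this follows since $\|\nabla s_j\|_\Lambda^{4-4H}\|s_j\|_{2H}^{2H}\le D(s_j)\le C$ and $\log^{4-2H}L\le \log^4 L$. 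The rest is routine adaptation of the $d\ge 5$ argument, with geometric sums over scales replaced by crude bounds by $m$.
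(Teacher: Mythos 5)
Your proposal is correct and follows essentially the same route as the paper: the same dyadic decomposition via the shift functions $s_j$, the same application of Lemma~\ref{lem:fluctuation and concentration} with threshold $\asymp r/\log L$ per scale and $h=2r$, the bound $D(s_j)\le C$ from Claim~\ref{claim:D(s)}, and a sum over the $m\asymp\log L$ scales followed by a union bound over coordinates. One small correction: no union bound over $v\in\Lambda_L$ is needed (and performing one would give $L^{2d}$, not $L^d$), since $q(r)$ is already defined as a maximum over $v$ of pointwise probabilities, so the bound for a fixed $v$ suffices.
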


\begin{proof}
    We use the shift functions $s_j$ given in \eqref{def:s_j}. By Lemma~\ref{lem:fluctuation and concentration} with $h$ being $2r$ we have 
    \begin{equation}
 \P \big( \left|(\varphi^{\eta},-\Delta s_j)\right|\ge r/(md) \big) \le C\exp \big(-cr^{4-2H}/m^4 \big)+ CL^{d} q(2r).
\end{equation}
Thus, we obtain 
\begin{equation}
    \mathbb P \big( | \varphi ^\eta  _v \cdot e | \ge r/d \big) \le Cm \exp \big( -cr ^{4-2H}/m^4 \big) +CmL^{d} q(2r).
\end{equation}
This finishes the proof of the lemma using a union bound over the coordinates.
\end{proof}

\begin{proof}[Proof of the upper bound in \eqref{eq:bound h d=4} from Theorem \ref{thm: main 4}]
Using induction on $\ell \ge 0$ we obtain that 
     \begin{equation}
        q(r) \le  L^{2d\ell }q(r2^\ell )+ \sum _{i=0}^{\ell-1} C L^{2d(i+1)} \exp (-c(r2^i)^{4-2H}/\log ^4L).
    \end{equation}
    Thus, taking $\ell \to \infty $ and using Corollary~\ref{cor:maxphi} we obtain that for all $t$ sufficiently large $q(t(\log L)^{5/(4-2H)}) \le CL^{-ct^{4-2H}}$, finishing the proof. 
\end{proof}

\subsubsection{Dimensions $d\le 3$}
In this section we prove the upper bound in \eqref{eq:123 height fluctuations thm} from Theorem \ref{thm:main 123}. For any $h>0$ define 
\begin{equation}
    p(h):=\mathbb P \big( \max _{v\in \Lambda } \| \varphi _v\| \ge h  \big).
\end{equation}

\begin{lem}\label{lem:loc}
    Suppose that $d\le 3$. For any $v\in \Lambda _L$ and $h\ge r> 0$ we have
    \begin{equation}\label{eq:loclemma}
       \mathbb P \big( \| \varphi ^\eta  _v \| \ge r  \big) \le  C\exp \big( -cr^4h^{-2H} r_v^{-1/2}L^{d-7/2} \big) +3\sum _{j=0}^{\lceil \log_2 L \rceil } p(2^{j/20}h).
    \end{equation}    
\end{lem}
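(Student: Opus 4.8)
The plan is to reuse the dyadic Green's-function decomposition of~\eqref{def:s_j}, but --- in contrast to the cases $d\ge 5$ and $d=4$ --- to allocate the budgets scale by scale \emph{from the top scale downwards}. Fix $v\in\Lambda_L$ and a unit vector $e\in\R^n$, write $r_v:=d(v,\Lambda_L^c)$, and recall from~\eqref{eq:phi i} that the shifts $s_j=s_j^v$ telescope to $\varphi^\eta_v\cdot e=\sum_{j=1}^m(\varphi^\eta,-\Delta s_j)$, where $m$ is the top scale and $2^m\asymp L$ (with constants independent of $v$). For $j=1,\dots,m$ put
\[
r_j:=c_0\,2^{-(m-j)/20}\,r,\qquad h_j:=2^{(m-j)/20}\,h,
\]
where $c_0=c_0(n)>0$ is small enough that $\sum_{j=1}^m r_j<r/\sqrt n$. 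Since $h\ge r$ we have $h_j\ge h\ge r\ge r_j$, so Lemma~\ref{lem:fluctuation and concentration} applies to each $s_j$ with the parameters $r_j,h_j$, and since $\supp(s_j)\subset\Lambda_j\subset\Lambda_L$ the height term it produces is at most $p(h_j)=p(2^{(m-j)/20}h)$.

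The crux is the estimate for $D(s_j)$. For $d\le 3$ the two bounds of Claim~\ref{claim:D(s)} are both unfavourable: $2^{j(4-d)}$ is \emph{increasing} in $j$ (so, unlike the summable $D(s_j)\le 2^{-j}$ available for $d\ge5$, it is dominated by the top scale), while $r_v^{4-2H}2^{j(2H-d)}$ decays. The way out is to interpolate between them: using $\min(a,b)\le a^\theta b^{1-\theta}$ with $\theta:=\tfrac{7-4H}{8-4H}\in(0,1)$, chosen so that the $r_v$-exponent of the product equals $\tfrac12$ (a short computation then gives that the $2^j$-exponent equals $\tfrac72-d$), one obtains
\[
D(s_j)\ \le\ C\,2^{j(7/2-d)}\,r_v^{1/2}\ \le\ C\,L^{7/2-d}\,2^{-(m-j)(7/2-d)}\,r_v^{1/2},
\]
the last step using $2^m\asymp L$. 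Since $d\le 3$ forces $\tfrac72-d>0$, this bound decays geometrically as $j$ decreases from $m$, which is exactly why $r_j$ and $h_j$ are indexed by the distance $m-j$ to the top.

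Substituting this into Lemma~\ref{lem:fluctuation and concentration} and abbreviating $T:=r^4h^{-2H}r_v^{-1/2}L^{d-7/2}$, the Gaussian exponent at scale $j$ satisfies
\[
\frac{c\,r_j^4}{h_j^{2H}\,D(s_j)}\ \ge\ c_1\,T\,2^{(m-j)\varepsilon},\qquad \varepsilon:=\Big(\tfrac72-d\Big)-\tfrac15-\tfrac H{10},
\]
the point being that the $2^{-(m-j)/5}$ lost from $r_j^4$ and the $2^{(m-j)H/10}$ lost from $h_j^{2H}$ are together strictly less than the $2^{(m-j)(7/2-d)}$ gained from $D(s_j)^{-1}$; one checks $\varepsilon>0$ for all $d\le 3$ and $H\in(0,1)$ (the worst case is $d=3$, where $\varepsilon=\tfrac{3-H}{10}>0$). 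Summing the bound of Lemma~\ref{lem:fluctuation and concentration} over $j=1,\dots,m$ and then over the $n$ coordinates (which only rescales $T$ by a factor depending on $n$, $d$ and $H$, absorbed into the constants) gives
\[
\mathbb P\big(\|\varphi^\eta_v\|\ge r\big)\ \le\ C\sum_{k\ge 0}\exp\!\big(-c_1 T\,2^{k\varepsilon}\big)\ +\ 3\sum_{j=0}^{\lceil\log_2 L\rceil}p\big(2^{j/20}h\big).
\]
If $T\ge 1$, the geometric-type sum is dominated by its first term and is $\le C'\exp(-c_1T)$; if $T<1$, then $C'\exp(-c_1T)\ge C'e^{-c_1}\ge 1$ for $C'$ large, so the claimed inequality holds trivially since its right-hand side then exceeds $1$. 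In either case~\eqref{eq:loclemma} follows. The main obstacle is precisely the $D(s_j)$ estimate together with the realization that, unlike in high dimensions, the scales must be summed top-down; once the interpolated bound $D(s_j)\lesssim 2^{j(7/2-d)}r_v^{1/2}$ is established and the positive margin $\varepsilon>0$ verified, the remainder is routine bookkeeping of dyadic exponents.
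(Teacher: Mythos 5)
Your argument is correct and takes essentially the same route as the paper's proof: the same dyadic shifts $s_j$, the same top-down budgets $r_j\asymp 2^{-(m-j)/20}r$ and $h_j=2^{(m-j)/20}h$ fed into Lemma~\ref{lem:fluctuation and concentration}, and the same key reduction of Claim~\ref{claim:D(s)} to $D(s_j)\le C\,r_v^{1/2}2^{j(7/2-d)}$. The only (cosmetic) difference is that you obtain this last bound by H\"older-interpolating the two branches of the min with $\theta=\tfrac{7-4H}{8-4H}$, whereas the paper substitutes $r_v\le 2^j$ into the second branch; the remainder of the bookkeeping, including the reduction to $T\ge 1$, matches the paper.
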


\begin{proof}[Proof of Lemma~\ref{lem:loc}]
    Fix $v\in \Lambda _L$ and a unit vector $e\in \mathbb R ^n$. Without loss of generality, we may assume that $r^4h^{-2H} r_v^{-1/2}L^{d-7/2} \ge 1$ as other wise the statement of the lemma clearly holds. We consider the shift functions $s_j=s_j^v$ defined in \eqref{def:s_j}. Recall that $G_m^ve=\sum _{j=1}^ms_j$ and that
    \begin{equation}\label{eq:phi i3}
        \varphi ^{\eta } _v \cdot e=(\varphi ^\eta ,-\Delta G_m^ve)= \sum _{j=1}^m (\varphi^ \eta ,- \Delta s_j).
    \end{equation}
    Next, we fix a constant $c_0>0$ such that $c_0\sum _{j=0}^{\infty } 2^{-j/20}<1/d$. By Claim~\ref{claim:D(s)}, we have that for all $1\le j \le m$
    \begin{equation*}
        D(s_j) \le Cr_v^{4-2H}2^{j(2H-d)}\le Cr_v^{1/2}2^{j(2H-d+7/2-2H)}=C r_v^{1/2}2^{(j-m+m)(7/2-d)}\le C r_v^{1/2}L^{7/2-d} 2^{(j-m)/2}
    \end{equation*}
   where in the second inequality we used that $r_v\le 2^j $ and that $7/2-H\ge 0$ (since $H\le 1$); in the last inequality we used that $2^m \le 2L$ and $7/2-d\ge 1/2$ for $d\le 3$. Thus, by Lemma~\ref{lem:fluctuation and concentration} with $h$ being $2^{(m-j)/20}h$ and $r$ being $c_02^{(j-m)/20}r$ we have  
    \begin{equation}\label{1}
        \mathbb P \big( | (\varphi^ \eta , \Delta s_j)| \ge c_02^{(j-m)/20}r \big) \le  C\exp \big( -cr^4h^{-2H} r_v^{-1/2} L^{d-7/2} 2^{(m-j)/2}  \big) +3p(2^{(m-j)/20}h)   .
    \end{equation}

    Thus, by \eqref{eq:phi i3} and the choice of $c_0$ we obtain
\begin{equation}\label{eq:77733}
\begin{split}
    \mathbb P \big( |\varphi ^\eta  _v \cdot e| \ge r/d  \big) &\le \sum _{j=1}^m \mathbb P \big( | (\varphi^ \eta , \Delta s_j)| \ge c_02^{(j-m)/20}r  \big) \\
    &\le C \sum _{j=1}^m\exp \big( -cr^4h^{-2H} r_v^{-1/2}L^{d-7/2} 2^{(m-j)/2} \big) + 3\sum _{j=1}^m p(2^{(m-j)/20} h) \\
    &\le C\exp \big( -cr^4h^{-2H} r_v^{-1/2}L^{d-7/2} \big) +3\sum _{j=0}^{m-1} p(2^{j/20}h).
\end{split}
\end{equation}
This finishes the proof of the lemma.
\end{proof}

Looking at the last proof, we observe that the box $\Lambda _L$ could be replaced with any other box $\Delta \subseteq \Lambda _L$ with $v\in \Delta $. However, $\varphi ^\eta |_{\Delta ^c}$ is not necessarily $0$ when $\Delta \neq \Lambda _L$ and therefore the first equality in \eqref{eq:phi i3} does not hold and the expression $\varphi ^\eta _v\cdot e$ should be replaced with $(\varphi ^\eta , -\Delta G_\Delta ^v e)$. Thus, if $\Delta \subseteq \Lambda _L$ is a box of side length $\ell $ and $d(v,\Delta ^c)\ge \ell /4$, we obtain that for all $h\ge r >0$
\begin{equation}\label{eq:with delta}
       \mathbb P \big( |(\varphi ^\eta , \Delta G_\Delta ^v \cdot e) | \ge r  \big) \le  C\exp \big( -cr^4h^{-2H} \ell ^{d-4} \big) +3\sum _{j=0}^{ \lceil \log_2 \ell \rceil } p(2^{j/20}h).
\end{equation}

In the following lemma we bound the height difference between pairs of vertices in $\Lambda $. We will then combine this estimate with a chaining argument in order to bound the maximal height of the surface.

\begin{lem}\label{lem:holder}
    Suppose that $d\le 3$. Then, for all $v,u\in \mathbb Z ^d$ and $h\ge r>0$ we have  
    \begin{equation}\label{eq:holder}
        \mathbb P \big( \| \varphi^ \eta _v-\varphi^ \eta _u\| > r \big) \le C\exp \big( -cr^4h^{-2H} \|u-v\|^{-1/2}L^{d-7/2} \big) +9\sum _{j=0}^m p(2^{j/20}h).
    \end{equation}
\end{lem}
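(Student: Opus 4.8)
The plan is to estimate the height difference $\varphi^\eta_v - \varphi^\eta_u$ using the same dyadic-scale decomposition that proved Lemma~\ref{lem:loc}, but now applied to the \emph{difference} of Green's functions at $v$ and $u$. Fix a unit vector $e\in\R^n$; it suffices to bound $\P(|(\varphi^\eta_v - \varphi^\eta_u)\cdot e| > r/\sqrt n)$ (or simply work coordinate-wise and take a union bound over the $n$ coordinates). Write $\rho := \|u-v\|$ and let $m_0$ be the dyadic scale with $2^{m_0}\approx \rho$. For scales $j\le m_0$ the two vertices are ``far apart on that scale'' and one treats the contributions of $v$ and of $u$ separately, exactly as in Lemma~\ref{lem:loc} (each contributes a term of the form appearing in~\eqref{eq:loclemma}, with $r_v$ replaced by a quantity comparable to $\rho$ since $r_v, r_u \gtrsim$ nothing a priori — here one should be a little careful and perhaps use the version~\eqref{eq:with delta} on boxes of side length $\approx \rho$ around each of $v$ and $u$). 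For scales $j > m_0$, $v$ and $u$ sit inside a common box of side $\approx 2^j$, and the relevant shift function is $s_j := (G_j^v - G_{j-1}^v - G_j^u + G_{j-1}^u)e$; the point is that the \emph{second} part of Corollary~\ref{cor:green} (and of Lemma~\ref{lem:Green}) gives the extra gain of a factor $\|u-v\|/2^j = \rho\, 2^{-j}$ in the sup-norm of $G^v_\bullet - G^u_\bullet$ compared to $G^v_\bullet$ alone.

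Concretely, the first step is to prove the analogue of Claim~\ref{claim:D(s)} for these difference-shifts: for $j > m_0$ one should get $D(s_j) \le C\rho^{?}\, 2^{?\,j}$ with exponents improved by the factor $(\rho 2^{-j})$ raised to appropriate powers, so that summing over $j$ from $m_0$ up to $m\approx\log_2 L$ produces a geometrically decaying series whose dominant term is governed by the smallest scale $j\approx m_0$, i.e.\ by $\rho$, and by the largest scale, i.e.\ by $L$. The bookkeeping should be arranged exactly so that $D(s_j)\le C\rho^{1/2}L^{7/2-d}2^{-(j-m)/2}$ (or the corresponding expression), matching the form used in Lemma~\ref{lem:loc}; this is what makes $\rho^{-1/2}L^{d-7/2}$ appear in the exponent of the stretched-exponential term of~\eqref{eq:holder}. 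Then the second step is the routine application of Lemma~\ref{lem:fluctuation and concentration} scale by scale, with $h$ replaced by $2^{(m-j)/20}h$ and $r$ by $c_0 2^{(j-m)/20}r$ as in~\eqref{1}, followed by summing the geometric series in $j$ and absorbing constants. The contributions from the ``far'' scales $j\le m_0$ are handled by~\eqref{eq:with delta} applied around $v$ and around $u$ separately, each contributing at most $C\exp(-cr^4 h^{-2H}\rho^{d-4}) + 3\sum_{j} p(2^{j/20}h)$, which is dominated by (and so can be merged into) the stated bound since $\rho^{d-4}\ge \rho^{-1/2}L^{d-7/2}$ when $\rho\le L$; combining with the $\le 3$ copies of the sum over $p$ from each of $v$, $u$, and the far/near split gives the $9\sum_j p(2^{j/20}h)$ on the right-hand side.

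The main obstacle I expect is the Green's-function bookkeeping in the difference estimate, i.e.\ correctly tracking how the two small parameters $\rho 2^{-j}$ (from Corollary~\ref{cor:green}(2)) and the various powers of $2^j$, $r_v$, $r_u$ enter $\|s_j\|_{2H}^{2H}$, $\|\nabla s_j\|_\Lambda^2$ and $|\supp(s_j)|$, and verifying that the resulting bound on $D(s_j)$ is genuinely summable with the claimed leading behavior. One subtlety is that $v$ and $u$ need not be deep inside $\Lambda_L$, so $r_v$ or $r_u$ could be small; at scales $j\le m_0$ this is why one restricts to boxes of side $\approx\rho$ around each vertex via~\eqref{eq:with delta}, and at scales $j>m_0$ one has $r_v, r_u \le 2^{j+1}$ automatically, which is all that Claim~\ref{claim:D(s)}-type estimates require. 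A second, more mechanical point is handling the edge cases $\rho \ge L$ (trivial, since then the event is controlled by the two single-vertex estimates) and $\rho$ of order $1$ (where the sum over far scales is empty and one only has the $j>0$ part), and checking that the constants $c_0$ and the geometric ratios can be chosen uniformly. Once the $D(s_j)$ estimate is in hand, the rest is a verbatim repetition of the argument for Lemma~\ref{lem:loc}.
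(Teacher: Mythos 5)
Your proposal is correct and follows essentially the same route as the paper: the same difference shifts $s_j'=(G_j^v-G_j^u)e-(G_{j-1}^v-G_{j-1}^u)e$, the same gain of a factor $\|u-v\|2^{-j}$ from the second parts of Lemma~\ref{lem:Green} and Corollary~\ref{cor:green} leading to $D(s_j')\le C\|u-v\|^{1/2}L^{7/2-d}2^{(j-m)/2}$, the same scale-by-scale application of Lemma~\ref{lem:fluctuation and concentration}, and the same treatment of the small scales via~\eqref{eq:with delta} around $v$ and $u$, yielding the $9\sum_j p(2^{j/20}h)$. The boundary-proximity subtlety you flag is resolved in the paper by first reducing to the case $\|u-v\|\le r_v/10$ (the complementary case following directly from Lemma~\ref{lem:loc}), which guarantees that the boxes of side $\approx\|u-v\|$ used at the small scales sit well inside $\Lambda_L$.
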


\begin{proof}[Proof of Lemma~\ref{lem:holder}]
    The proof is similar to the proof of Lemma~\ref{lem:loc} and some of the details are omitted. Fix $u,v\in \mathbb Z ^d$ and a unit vector $e\in \mathbb R ^n$. We may assume that $u,v\in \Lambda_L $ and that $\|u-v\|\le r_v/10$. Indeed, if $\|u-v\|\ge  r_v/10$ then the inequality in \eqref{eq:holder} follows from Lemma~\ref{lem:loc}. 

    For the proof of this lemma we work with slightly different shift functions. Let $\Lambda _j:=\Lambda_L  \cap (v+(-2^j,2^j)^d)$ and let $m$ be the first integer for which $\Lambda _{m}=\Lambda _L$. Let $m':=\lceil  \log _2 \|u-v\| \rceil +2$ and note that both $u,v$ are in $\Lambda _{m'}$. For $m'\le j \le m$ we consider the Green's functions $G_j^v:=G_{\Lambda _j}^v$ and $G_j^u:=G_{\Lambda _j}^u$, both with respect to the same box centered at $v$. For $m'< j \le m$ define the functions $s_j'$ by 
    \begin{equation}
        s'_j(x):= (G_j^v(x)-G_j^u(x))e-(G_{j-1}^v(x)-G_{j-1}^u(x))e
    \end{equation}
and note that 
    \begin{equation}\label{eq:phi2}
        (\varphi^ \eta _v-\varphi^ \eta _u)\cdot e =-(\varphi ^\eta ,\Delta G_j^v(x)\cdot e )+ (\varphi ^\eta ,\Delta G_j^u(x)\cdot e ) - \sum _{j=m'+1}^{m} (\varphi^ \eta ,\Delta s_j').
    \end{equation}

Next, as in Claim~\ref{claim:D(s)}, we prove that for all $m<j\le m$ we have that
\begin{equation}\label{eq:D(s_j')}
    D(s'_j) \le C\|u-v\|^{4-2H} 2^{j(2H-d)} \le C\|u-v\|^{1/2}2^{j(7/2-d)} \le C\|u-v\|^{1/2}L^{7/2-d}2^{(j-m)/2},
\end{equation}
where in the second inequality we used that $H\le 1$ and the last inequality we used $d\le 3$. To this end, it suffices to prove that 
        \begin{equation}\label{eq:sj2}
        \|s_j'\|_{2H}^{2H} \le C\|u-v\|^{2H}2^{dj+2H(1-d)j}   , \quad  \|\nabla s_j'\|_\Lambda ^2 \le C\|u-v\|^22^{-dj} 
    \end{equation} 
By the second part of Corollary~\ref{cor:green} (with $\Lambda=\Lambda_j$ and $\Delta=\Lambda_{j-1}$) we have $\max _{x\in \Lambda_j }\|s_j'(x)\| \le \|u-v\|2^{j(1-d)}$ and therefore $\|s_j'\|_{2H}^{2H}\le \|u-v\| ^{2H}2^{2H(1-d)j +dj}$. This finishes the proof of the first part of \eqref{eq:sj2}.

Next, for all $m'<j \le m$ we have 
    \begin{equation}
        \|\nabla s_j'\| _\Lambda ^2 = - \sum _{x\in \partial \Lambda _{j-1}\setminus\partial \Lambda } \! (G_j^v(x)-G_j^u(x)) \Delta (G_{j-1}^v(x)-G_{j-1}^u(x))\le C\|u-v\|^22^{-jd}
    \end{equation}
    where in here we used that, by Lemma~\ref{lem:Green}, for all $x\in \partial \Lambda _{j-1}\setminus\partial \Lambda_L$ we have $|G_j^v(x)-G_j^u(x)|\le C\|u-v\|2^{j(1-d)}$ and $|\Delta (G_{j-1}^v(x)-G_j^u(x))|=| G_{j-1}^v(y)-G_j^u(y)|\le C\|u-v\|2^{-jd}$ where $y$ is the unique neighbour of $x$ in $\Lambda _{j-1}$. This finishes the proof of \eqref{eq:sj2}.

Next, fix a constant $c_0>0$ such that $c_0\sum _{j=0}^\infty 2^{-j/20} \le 1/(3d)$. By \eqref{eq:D(s_j')} and Lemma~\ref{lem:fluctuation and concentration} with $h$ being $2^{(m-j)/20}h$ we have
    \begin{equation*}
        \mathbb P \big( | (\varphi^ \eta , \Delta s_j)| \ge c_02^{(j-m)/20}r \big) \le  C\exp \big( -cr^4h^{-2H} \|u-v\|^{-1/2} L^{d-7/2} 2^{(m-j)/2}  \big) +3 p(2^{(m-j)/20}h).
    \end{equation*}
Thus, by \eqref{eq:phi2} and the choice of $c_0$ we have
\begin{equation*}
\begin{split}
    \mathbb P \big( |(\varphi ^\eta  _v -\varphi _u^\eta ) \cdot e| \ge r/d  \big) & \le \mathbb P \big( \big| (\varphi^ \eta , (\Delta  G_{m'}^v)e ) \big| \ge r/(3d) \big)+ \mathbb P \big( \big| (\varphi^ \eta , (\Delta  G_{m'}^u)e ) \big| \ge r/(3d) \big) \\
    &\quad \quad \quad \quad \quad \quad \quad + \sum _{j=m'+1}^m \mathbb P \big( | (\varphi^ \eta , \Delta s_j)| \ge c_02^{(j-m)/20}r  \big) \\
    &\le C\exp \big( -cr^4h^{-2H} \|u-v\|^{-1/2}L^{d-7/2} \big) +9\sum _{j=0}^m p(2^{j/20}h),
\end{split}
\end{equation*}
where in the last inequality we used \eqref{eq:with delta} to bound the first two terms and completed the calculation just like in \eqref{eq:77733}.
\end{proof}

\begin{cor}\label{cor:p}
    Suppose that $d\le 3$. For any $r>0$ we have that 
    \begin{equation}
        p(r)\le C\exp (-cr^{4-2H}L^{4-d}) + C\sum _{j=1}^{4\log L} 2^{dj}p(2^{j/20}r).
    \end{equation}
\end{cor}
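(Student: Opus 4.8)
The plan is to reduce the statement about the global maximum $p(r) = \P(\max_{v\in\Lambda_L}\|\varphi_v^\eta\|\ge r)$ to the pointwise and pairwise estimates already obtained in Lemma~\ref{lem:loc} and Lemma~\ref{lem:holder}, via a standard chaining (Kolmogorov–Chentsov-type) argument. First I would specialize the free parameter $h$ in those two lemmas to the scale $h = r$, so that the polynomial-in-$L$ prefactors collapse: Lemma~\ref{lem:loc} with $h=r$ gives, for every $v$, a tail bound of the form $C\exp(-c r^{4-2H} r_v^{-1/2} L^{d-7/2}) + 3\sum_{j=0}^{\lceil\log_2 L\rceil} p(2^{j/20}r)$, and since $r_v \le 2L$ this is at most $C\exp(-c r^{4-2H} L^{d-4}) + 3\sum_j p(2^{j/20}r)$ (absorbing constants, using $d\le 3$ so the exponent of $L$ is negative and $L^{-1/2}$ only helps). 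Similarly Lemma~\ref{lem:holder} with $h=r$ controls the increment $\|\varphi_v^\eta - \varphi_u^\eta\|$ with the same kind of bound, with $\|u-v\|^{-1/2}$ in place of $r_v^{-1/2}$.

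The core of the argument is then the chaining. I would fix a dyadic filtration of $\Lambda_L$: at level $k$, partition $\Lambda_L$ into sub-boxes of side $\approx 2^{-k} L$ (or equivalently, build nets $N_k \subset \Lambda_L$ of mesh $2^{-k}L$ for $k=0,\dots,\lceil\log_2 L\rceil$, with $N_0$ a single point where $\varphi$ is pinned to $0$ by the boundary condition, actually the boundary is where $\varphi\equiv 0$). For any $v\in\Lambda_L$ choose a chain $v = v_K, v_{K-1}, \dots, v_0$ with $v_k\in N_k$ and $\|v_k - v_{k-1}\| \le C 2^{-k}L$. Then $\varphi_v^\eta = \sum_{k=1}^{K}(\varphi_{v_k}^\eta - \varphi_{v_{k-1}}^\eta)$ (using $\varphi_{v_0}^\eta$ is near the boundary, or more carefully one routes the chain to a boundary vertex). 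Allocating a budget $r_k = c_1 2^{-k/40} r$ to level $k$ with $\sum_k r_k \le r$, a union bound over the at most $|N_k||N_{k-1}| \le C 2^{2kd}$ pairs at level $k$, combined with Lemma~\ref{lem:holder} applied at distance $2^{-k}L$ and target $r_k$, yields
\begin{equation*}
\P\Big(\max_{v}\|\varphi_v^\eta\|\ge r\Big) \le \sum_{k=1}^{\lceil\log_2 L\rceil} C 2^{2kd}\Big[C\exp\big(-c r^{4-2H} (2^{-k}L)^{-1/2} L^{d-7/2} 2^{-k(4-2H)/40}\big) + C\sum_{j} p(2^{j/20}r)\Big].
\end{equation*}
Here the exponent inside $\exp$ is $\ge c r^{4-2H} L^{d-4} \cdot 2^{k(1/2 - (4-2H)/40)}$, and since $1/2 > (4-2H)/40$ this is increasing in $k$; so each exponential term is bounded by $\exp(-c r^{4-2H} L^{d-4})$, and the prefactors $2^{2kd}$ sum to $C L^{2d}$ which can be absorbed into the constant $c$ (at the cost of replacing $L^{4-d}$ by a slightly smaller constant times it — or one keeps an extra $\log L$ or polynomial factor which is harmless since the final corollary only needs the stated form; in fact the stated bound $C\exp(-cr^{4-2H}L^{4-d})$ should be read with the understanding that the polynomial prefactor is absorbed by shrinking $c$, valid since $L^{4-d}\to\infty$). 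The second, self-referential term contributes $C\sum_{k}2^{2kd}\sum_j p(2^{j/20}r)$; reindexing $i = j$ and using monotonicity of $p$ one collects this into $C\sum_{i=1}^{4\log L} 2^{di} p(2^{i/20}r)$ as claimed — the key point being that the exponents $j/20$ appearing at every level are the same, and the number of levels times $2^{2kd}$ is polynomial, so after renaming constants the sum $\sum_j$ (over $j$ up to $\lceil\log_2 L\rceil$) dominates.

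The main obstacle I anticipate is bookkeeping rather than conceptual: one must choose the per-scale budget exponent ($1/20$ here, and I introduced $1/40$ for the increments) small enough that the geometric sums over chain levels converge and that the exponential gains at each scale dominate the $2^{2kd}$ entropy factors, while simultaneously ensuring the $h\ge r$ hypothesis of Lemmas~\ref{lem:loc} and~\ref{lem:holder} is respected (it is, since we take $h=r$ and the targets $r_k\le r$) and that the self-referential terms $p(2^{j/20}r)$ aggregate into exactly the form with exponent $2^{j/20}$ and prefactor $2^{dj}$ stated in the corollary. A secondary subtlety is handling the chain's endpoint: rather than chaining to a fixed interior point, one should chain each $v$ to the nearest point of $\partial\Lambda_L$ (where $\varphi^\eta\equiv 0$) through boxes of geometrically decreasing size that shrink as one approaches the boundary, so that $r_v$-type factors at the last step are also favorable; alternatively one invokes Lemma~\ref{lem:loc} directly for the "anchor" vertex $v_0$ of each chain and absorbs it into the first term. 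Either way the structure is routine once the exponents are fixed, and the corollary follows.
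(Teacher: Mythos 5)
Your overall strategy --- telescoping $\varphi_v^\eta$ along a dyadic chain to the boundary, controlling each increment with Lemma~\ref{lem:holder}, and union bounding over nets --- is exactly the paper's strategy. However, there is a genuine gap in how you aggregate the self-referential terms, and it traces back to your choice to freeze $h=r$ at every scale. With $h=r$, each application of Lemma~\ref{lem:holder} contributes the \emph{same} remainder $9\sum_{j}p(2^{j/20}r)$, so after multiplying by the level-$k$ entropy and summing over levels you get a bound of the form $C\,L^{2d}\sum_j p(2^{j/20}r)$ (or $CL^{d}\sum_j p(2^{j/20}r)$ even if you only count adjacent pairs in each net, which is what you should do --- your $|N_k||N_{k-1}|\le C2^{2kd}$ counts all pairs and is wasteful). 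This is strictly weaker than the claimed $C\sum_j 2^{dj}p(2^{j/20}r)$: in the latter, the coefficient of $p(2^{1/20}r)$ is $O(1)$, in yours it is $L^{\Theta(d)}$, and ``renaming constants'' cannot remove an $L$-dependent factor. The weaker recursion breaks the downstream iteration: when one substitutes the recursion into itself to prove~\eqref{eq:123 height fluctuations thm}, the first-generation term becomes $L^{2d}\exp(-c(2^{1/20}r)^{4-2H}L^{d-4})$, which is not $\le C\exp(-c'r^{4-2H}L^{d-4})$ in the relevant regime $r^{4-2H}L^{d-4}=t^{4-2H}=O(1)$. The paper's fix is to make $h$ scale-dependent: for pairs at distance $2^{m-j}$ with target $c_02^{-j/20}r$ it applies Lemma~\ref{lem:holder} with $h=2^{(j+1)/20}r$, so the remainder at scale $j$ is $9\sum_i p(2^{(i+j+1)/20}r)$; multiplying by the entropy $C2^{dj}$ and reindexing $k=i+j+1$, the coefficient of $p(2^{k/20}r)$ is $\sum_{j<k}C2^{dj}\le C2^{dk}$, which is precisely the claimed form. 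Without this matching of the entropy exponent $2^{dj}$ to the shift $2^{j/20}$ in the argument of $p$, the corollary as stated is not reached.

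A related confusion underlies your treatment of the exponential terms. You bound each level's exponential by its worst ($k=0$) value and then claim the summed prefactor $CL^{2d}$ ``can be absorbed into the constant $c$ \dots valid since $L^{4-d}\to\infty$.'' But your own computation (correctly) produces the exponent $r^{4-2H}L^{d-4}$, not $L^{4-d}$ (the $L^{4-d}$ in the displayed statement is a sign typo; the proof's working hypothesis is $r^{4-2H}L^{d-4}\ge1$, consistent with the target $t^{4-2H}$ after substituting $r=tL^{\frac{4-d}{4-2H}}$). Since $r^{4-2H}L^{d-4}$ may be of order $1$, a global factor $L^{2d}$ cannot be absorbed by shrinking $c$. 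The correct argument keeps the per-scale geometric gain $2^{ck}$ inside the exponent and uses it to beat the per-scale entropy $2^{Cdk}$ term by term (after assuming $r^{4-2H}L^{d-4}\ge C_0$ without loss of generality), which is what the paper does. Both defects are repairable within your framework, but as written the proposal does not establish the stated inequality.
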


\begin{proof}
Let $r>0$. We may assume that $r^{4-2H}L^{d-4}\ge 1$ as otherwise the statement clearly holds (with appropriate constants $C,c$). Let $m:=\lfloor \log _2 L \rfloor +1$ and fix $c_0>0$ such that $c_0\sum _{j=0}^\infty 2^{-j/20}<1$. Define the event
    \begin{equation}
        \mathcal E := \bigcap _{j=0}^{m} \Big\{ \forall u,v\in 2^j \mathbb Z ^d , \|u-v\|_\infty \le 2^{j}, \  \| \varphi^ \eta _u -\varphi^ \eta _v \| \le c_02^{(j-m)/20}r \Big\}.
    \end{equation}
    Next, we fix $v\in \Lambda _L$ and show that $\| \varphi^ \eta _v \| \le r$ on the event $\mathcal E$, and therefore $p(r)\le \mathbb P (\mathcal E ^c)$. Without loss of generality, suppose that all the coordinates of $v$ are non-negative. There exists a sequence of vertices $v=v_0,v_1,\dots ,v_m$ that is non decreasing in each coordinate such that the following holds. For all $j<m$ we have that $v_j\in 2^j\mathbb Z ^d$ and $v_{j+1}-v_j\in \{ 0,2^j \}^d$ and $v_m\notin \Lambda _L$. Thus, using that $\varphi^ \eta _{v_m}=0$ we have on the event $\mathcal E $
    \begin{equation}
        \| \varphi^ \eta _v \| \le \sum _{j=0}^m \| \varphi^ \eta _{v_{j}}-\varphi^ \eta _{v_{j-1}} \| \le c_0r\sum _{j=0}^m 2^{(j-m)/20} \le r.
    \end{equation}
We turn to bound the probability of $\mathcal E ^c$. To this end, let $A_j$ be the set of pairs of vertices $u,v\in 2^{m-j} \mathbb Z ^d$, with at least one of them in $\Lambda _L$ such that $\|u-v\| _\infty \le 2^{m-j}$. We have that $|A_j|\le C2^{dj}$. Moreover, by Lemma~\ref{lem:holder} with $h$ being $2^{(j+1)/20}r$, for any $(u,v)\in A_j$ we have
\begin{equation*}
    \mathbb P \big( \| \varphi^ \eta _u -\varphi^ \eta _v \| \ge  c_02^{-j/20}r \big)\le C\exp \big( -c r^{4-2H} L^{d-4}2^{j/5}  \big) +9\sum _{i=0}^{m} p(2^{(i+j+1
    )/20}r),
\end{equation*}
where in here we also used $H\le 1$. Thus, we obtain
\begin{equation}
\begin{split}
    \mathbb P ( \mathcal E ^c ) &\le \sum _{j=0}^m \sum _{(u,v)\in A_j} \mathbb P \big( \| \varphi^ \eta _u -\varphi^ \eta _v \| \ge  c_02^{-j/20}r \big) \\
    &\le C \sum _{j=0}^{m} 2^{dj}  \exp \big( -cr^{4-2H} L^{d-4} 2^{j/5} \big) +C\sum _{j=0}^m 2^{dj} \sum _{i=0}^{m}  p(r2^{(i+j+1)/20})\\
    &\le C \exp \big( -cr^{4-2H} L^{d-4} \big) +C\sum _{j=1}^{2m} 2^{dj} p(2^{j/20}r),
\end{split}
\end{equation}
finishing the proof of the corollary.
\end{proof}

Note that $p$ and $q$ satisfy similar recursion inequalities.
Hence, the upper bound in \eqref{eq:123 height fluctuations thm} from Theorem \ref{thm:main 123} follows from Corollary~\ref{cor:p} in the same way as in the proof of the upper bound in \eqref{eq:loweboundheight d>4} from Theorem \ref{thm:main d>4}.

\subsection{Ground energy concentration}

In this section, we prove the upper bounds on the ground energy fluctuations given in \eqref{eq:lowerbound26}, \eqref{eq:boundGE4} and \eqref{eq:boundGEd>4} (Theorems \ref{thm:main 123}, \ref{thm: main 4} and \ref{thm:main d>4}) using the previously obtained upper bounds on the heights of the minimal surface. The upper bounds on the standard deviations in \eqref{eq:bound height 123} \eqref{eq:27}, \eqref{eq:GE d>4} will then follow easily.

To this end we will need the following claim.

\begin{claim}\label{claim:average}
Let $X_1,\dots ,X_n$ be a sequence of non-negative random variables (not necessarily independent) such that for all $t>0$
\begin{equation}\label{as:1}
    \max _{i\le n} \mathbb P \big( X_i \ge t\big) \le c_1e^{-c_2t^\alpha },
\end{equation}
for some constants $c_1,c_2,\alpha >0$. Then, there are constants $C=C(c_1,c_2,\alpha )$ and $c=c(c_1,c_2,\alpha )$ such that for all $t>0$
\begin{equation}
    \mathbb P \Big(\frac{1}{n} \sum _{i=1}^n X_i \ge t\Big) \le Ce^{-ct^\alpha }.
\end{equation}
\end{claim}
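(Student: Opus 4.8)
The plan is to prove the claim with constants independent of $n$ by avoiding the naive union bound over the $n$ indices (which would cost a factor $n$), and instead running a layer-cake decomposition of the sum together with Markov's inequality applied to a \emph{counting} function, whose expectation already carries the factor $n$ that cancels against the normalization $1/n$.

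First I would dispose of small $t$: for $t\le t_1$, with $t_1=t_1(c_2,\alpha)$ fixed below, the bound is trivial upon taking $C\ge e^{ct_1^\alpha}$, so assume $t\ge t_1$. Put $N(u):=|\{i\le n:X_i>u\}|$, so that by the layer-cake formula $\sum_{i=1}^n X_i=\int_0^\infty N(u)\,du$ and $0\le N(u)\le n$. On the event $\{\frac1n\sum_i X_i\ge t\}$ we get $\int_{t/2}^\infty N(u)\,du\ge nt-\int_0^{t/2}N(u)\,du\ge nt/2$; covering $[t/2,\infty)$ by the dyadic intervals $[2^jt/2,2^{j+1}t/2)$, $j\ge 0$, and using that $N$ is nonincreasing, this forces $\sum_{j\ge 0}2^j N(2^jt/2)\ge n$. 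By the weighted pigeonhole principle with weights $w_j=2^{-j-1}$ (which sum to $1$), on this event there exists $j\ge 0$ with $N(2^jt/2)\ge n2^{-2j-1}$. Hence, by a union bound over $j$ and Markov applied to the nonnegative variable $N(2^jt/2)$,
\begin{equation*}
\P\Big(\tfrac1n\textstyle\sum_{i}X_i\ge t\Big)\le\sum_{j\ge 0}\P\big(N(2^jt/2)\ge n2^{-2j-1}\big)\le\sum_{j\ge 0}\frac{\E[N(2^jt/2)]}{n2^{-2j-1}}\le 2c_1\sum_{j\ge 0}4^{\,j}e^{-c_2(2^jt/2)^\alpha},
\end{equation*}
using $\E[N(u)]=\sum_i\P(X_i>u)\le nc_1e^{-c_2u^\alpha}$; the factor $n$ has cancelled.

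To finish, bound the last sum: since $2^{j\alpha}-1=(2^\alpha-1)(1+2^\alpha+\dots+2^{(j-1)\alpha})\ge(2^\alpha-1)j$, each summand is at most $4^{\,j}e^{-c_2(t/2)^\alpha}\big(e^{-c_2(2^\alpha-1)(t/2)^\alpha}\big)^{j}$, and choosing $t_1=t_1(c_2,\alpha)$ so that $4\,e^{-c_2(2^\alpha-1)(t_1/2)^\alpha}\le\tfrac12$ makes this a convergent geometric series summing to at most $2$ for every $t\ge t_1$. This yields $\P(\frac1n\sum_i X_i\ge t)\le 4c_1\,e^{-c_22^{-\alpha}t^\alpha}$ for $t\ge t_1$, and combined with the trivial bound for $t<t_1$ gives the claim with $c=2^{-\alpha}c_2$ and $C=\max(4c_1,e^{2^{-\alpha}c_2t_1^\alpha})$, all depending only on $c_1,c_2,\alpha$.

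The one genuine point — and the reason the statement is not completely trivial — is keeping the constants free of $n$: a direct union bound over the indices loses a factor $n$, and bounding by $\P(\max_i X_i\ge t)$ does the same. The dyadic pigeonholing is exactly what lets one simultaneously use a threshold \emph{proportional to $n$} (so the $n$ in $\E[N(u)]\le nc_1e^{-c_2u^\alpha}$ cancels under Markov) and a height \emph{proportional to $t$} (so an $e^{-ct^\alpha}$ factor survives), with the geometric decay in $j$ absorbing the $4^{\,j}$ coming from the sum over scales. I note that when $\alpha\ge 1$ there is a shorter route: by Jensen $(\frac1n\sum_i X_i)^\alpha\le\frac1n\sum_i X_i^\alpha$, the variables $X_i^\alpha$ have uniformly exponential tails, and an exponential Markov bound together with $\E[\exp(\frac\lambda n\sum_i X_i^\alpha)]\le\frac1n\sum_i\E[e^{\lambda X_i^\alpha}]<\infty$ (Jensen again) concludes at once; but the layer-cake argument has the merit of handling all $\alpha>0$ uniformly.
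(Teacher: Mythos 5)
Your argument is correct, and the constants you produce indeed depend only on $c_1,c_2,\alpha$: the layer-cake identity $\sum_i X_i=\int_0^\infty N(u)\,du$ with $N(u)\le n$ does give $\int_{t/2}^\infty N\ge nt/2$ on the event in question, the dyadic pigeonhole with weights $2^{-j-1}$ forces $N(2^jt/2)\ge n2^{-2j-1}$ for some $j$, first-moment Markov cancels the factor $n$, and the bound $2^{j\alpha}-1\ge(2^\alpha-1)j$ makes the sum over scales geometric once $t\ge t_1(c_2,\alpha)$. However, your route is genuinely different from the paper's. The paper works with exponential moments: it shows $\E[e^{c_3X_i^\alpha}]\le 2\sqrt{c_1}$ with $c_3=c_2/2$ directly from the tail bound, then replaces $e^{c_3x^\alpha}$ by $g(x)=\max(c_4,e^{c_3x^\alpha})$ with $c_4$ large enough that $g$ is convex on $(0,\infty)$ — this is exactly the device that rescues the Jensen step when $\alpha<1$, where $x\mapsto e^{c_3x^\alpha}$ fails to be convex near the origin — and concludes by Jensen (pulling the average inside $g$) and Markov applied to $g(\frac1n\sum_iX_i)$. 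So the shorter route you sketch for $\alpha\ge1$ is essentially the paper's proof, and the convexification by maxing with a constant is what extends it to all $\alpha>0$; your dyadic counting argument achieves the same uniformity without any exponential moments or convexity, at the cost of a somewhat longer computation, and has the mild advantage of producing explicit constants (e.g.\ $c=2^{-\alpha}c_2$) and of adapting readily to weaker tail hypotheses where exponential moments are unavailable.
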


\begin{proof}
    Let $c_3:=c_2/2$ and note by the assumption in \eqref{as:1} we have
    \begin{equation*}
        \mathbb E [ e^{c_3X_i^\alpha }] = \int _0^{\infty } \!\mathbb P (e^{c_3X_i^\alpha }\ge t)dt= \int _0^{\infty } \! \mathbb P \big( X_i \ge ((\log t)/c_3) ^{1/\alpha } \big) dt \le \int _0^{\infty } \!\! \min \big(1,\frac{c_1}{t^2}\big)dt = 2\sqrt{c_1}. 
    \end{equation*}
    Next, let $c_4>0$ sufficiently large so that the function $g$ defined by $g(x):=\max (c_4,e^{c_3x^{\alpha }})$ is convex on $(0,\infty )$. It follows that $\mathbb E [g(X_i)] \le c_4+2\sqrt{c_1}$ and therefore by Jensen's inequality we have
    \begin{equation}
        \mathbb E \Big[ g\Big( \frac{1}{n}\sum _{i=1}^nX_i \Big) \Big] \le \mathbb E \Big[ \frac{1}{n}\sum _{i=1}^ng(X_i) \Big] \le c_4+2\sqrt{c_1}.
    \end{equation}
    Thus, by Markov's inequality, for all $t$ sufficiently large we have
    \begin{equation*}
        \mathbb P \Big(\frac{1}{n} \sum _{i=1}^n X_i \ge t\Big) \le \mathbb P \Big( g\Big( \frac{1}{n} \sum _{i=1}^n X_i \Big) \ge g(t)\Big) \le e^{-c_3t^\alpha } \mathbb E \Big[ g\Big( \frac{1}{n}\sum _{i=1}^nX_i \Big) \Big] \le (c_4+2\sqrt{c_1})e^{-c_3t^\alpha }.
    \end{equation*}
    This finishes the proof of the claim.
\end{proof}

 As usual, we write $\GE ^{\eta }$ for $\GE ^{\eta ,\Lambda _L}$.

\begin{proof}[Proof of the upper bound in \eqref{eq:lowerbound26} from Theorem \ref{thm:main 123}]
 Let $r>0$ and let $h:=\sqrt{r}L^{\frac{2-d}{2}}$. By Proposition~\ref{as:conc} with $\Delta =\Lambda _L$ and the upper bound \eqref{eq:123 height fluctuations thm} with $t:=h L^{\frac{d-4}{4-2H}}$ we have
\begin{equation*}
\begin{split}
    \inf _{\gamma \in \mathbb R}\mathbb P \big(  |\GE ^{\eta } -\gamma & | \ge r \big) \le 2 \exp \Big(- \frac{cr^2}{h^{2H}|\Lambda _L|} \Big) +\mathbb P \Big(  \max _{v\in \Lambda _L } \|\varphi ^\eta  _v\| \ge h \Big)\\
    &\le  C\exp \Big( -\frac{cr^2}{h^{2H}L^d} \Big) +C\exp \Big( -\frac{ch^{4-2H}}{L^{4-d}} \Big) \le C\exp \Big( -\frac{cr^{2-H}}{ L^{d-dH+2H} } \Big)  .
\end{split}
\end{equation*}
Since this inequality holds for all $r>0$ it follows that $\mathbb E |\GE^{\eta }|<\infty $ and that for all $r>0$
\begin{equation}
    \mathbb P \big(  |\GE ^{\eta} -\mathbb E [\GE^{\eta} ]| \ge r \big) \le C\exp \Big( -\frac{cr^{2-H}}{ L^{d-dH+2H} } \Big),
\end{equation}
for slightly different constants $C,c>0$ (see, e.g., \cite[Claim~5.6]{dembin2024minimal}). This finishes the proof.
\end{proof}

When $d\ge 4$ the maximal height of $\varphi ^\eta $ is asymptotically larger than the height $\|\varphi _v\|$ at a typical vertex $v\in \Lambda _L$. Hence, in order to obtain better bounds in this case we use the first and stronger bound given in Proposition~\ref{as:conc} together with Claim~\ref{claim:average}.
\begin{proof}[Proof of the upper bound in \eqref{eq:boundGE4} from Theorem \ref{thm: main 4}]
 For $v\in \Lambda _L$, define the random variable $X_v:=\|\varphi ^\eta _v\|^{2H} (\log L)^{\frac{-10H}{4-2H}}$ and observe that thanks to the upper bound in \eqref{eq:bound h d=4} we have $\mathbb P (X_v\ge t) \le Ce^{-ct^{\alpha }}$ for any $t>0$ with $\alpha :=(2-H)/H$. Thus, by Claim~\ref{claim:average} for any $h>0$ we have
\begin{equation*}
    \mathbb P \Big(  \frac{1}{|\Lambda _L|}\sum _{v\in \Lambda _L } \|\varphi ^\eta  _v\|^{2H} \ge h^{2H} \Big) = \mathbb P \Big(  \frac{1}{|\Lambda _L|}\sum _{v\in \Lambda _L } X_v \ge h^{2H} (\log L)^{\frac{-10H}{4-2H}} \Big) \le C\exp \Big( -\frac{ch^{4-2H}}{\log ^5L} \Big).
\end{equation*}
Hence, letting $h:=\sqrt{r}L^{-1}(\log L)^{5/4}$ we have by Proposition~\ref{as:conc} that
\begin{equation*}
\begin{split}
    \inf _{\gamma \in \mathbb R}\mathbb P \big(  |\GE ^{\eta } -\gamma & | \ge r \big) \le 2 \exp \Big(- \frac{cr^2}{h^{2H}|\Lambda _L|} \Big) +\mathbb P \Big(  \frac{1}{|\Lambda _L|}\sum _{v\in \Lambda _L } \|\varphi ^\eta  _v\|^{2H} \ge h^{2H} \Big)\\
    &\le  C\exp \Big( -\frac{cr^2}{h^{2H}L^4} \Big) +C\exp \Big( -\frac{ch^{4-2H}}{\log^5 L} \Big) \le C\exp \Big( -\frac{cr^{2-H}}{ L^{4-2H} (\log L)^{\frac{5H}{2}} } \Big).
\end{split}
\end{equation*}
This finishes the proof using the same arguments as the proof above.
\end{proof}

\begin{proof}[Proof of the upper bound in \eqref{eq:boundGEd>4} from Theorem \ref{thm:main d>4}]
    For any $r>0$ we let $h:=\sqrt{r}L^{-d/4}$ and similarly as in the proof of \eqref{eq:boundGE4} use Proposition~\ref{as:conc}, \eqref{eq:loweboundheight d>4} and Claim~\ref{claim:average} to obtain
\begin{equation*}
\begin{split}
    \inf _{\gamma \in \mathbb R}\mathbb P \big(  |\GE ^{\eta } -\gamma & | \ge r \big) \le 2 \exp \Big(- \frac{cr^2}{h^{2H}|\Lambda _L|} \Big) +\mathbb P \Big(  \frac{1}{|\Lambda _L|}\sum _{v\in \Lambda _L } \|\varphi ^\eta  _v\|^{2H} \ge h^{2H} \Big)\\
    &\le  C\exp \Big( -\frac{cr^2}{h^{2H}L^d} \Big) +C\exp \big( -ch^{4-2H} \big) \le C\exp \Big( -c r^{2-H} L^{-d(2-H)/2} \Big).
\end{split}
\end{equation*}
This finishes the proof.
\end{proof}

\section{Delocalization}\label{sec:deloc}
In this section, we derive the lower bounds for the height and ground energy fluctuations stated in Theorems \ref{thm:main 123}, \ref{thm: main 4} and \ref{thm:main d>4}. The core of the proof relies on the following theorem concerning local delocalization, which asserts that significant deviations of a simple local observable with respect to the noise within a subset \(\Lambda \subset \Lambda_L\) result in the delocalization of a positive density of vertices on the surface above \(\Lambda\). The lower bounds for the height fluctuations will be a straightforward application of this theorem in dimensions $d\ne 4$, and will be an application of this theorem together with a fractal-percolation argument in dimension $d=4$. The theorem will also be the main ingredient in the proof of the lower bounds for the ground energy fluctuations in dimensions $d\in\{1,2,3\}$.

\begin{thm}
\label{theorem:83-1} Let $d\ge 1$ and $H\in (0,1)$. There are constants $C,c,\alpha >0$ depending only on $d,H$ such that the following
holds. Let $L\ge 3$, $h\ge 1$ and 
$\Lambda=v_0+\Lambda_{\ell}\subseteq\Lambda_{L}$. Set 
\begin{equation}
    \hat \eta_\Lambda :=\sum _{v\in \Lambda} \eta_{v,\alpha h e_1}+\eta_{v,-\alpha h e_1}.
\end{equation}
Then
\begin{equation}
\mathbb{E}\big[\big|\big\{ v \in \Lambda  : \| \varphi _v^\eta  \| >  h \big\}\big|\mid\hat{\eta}_{\Lambda},\mathcal{F}_{\Lambda^{c}}\big]\ge c|\Lambda|\quad\text{on the event}\quad\big\{\hat{\eta}_{\Lambda}\le - Ch^2\ell ^{d-2}\big\}
\end{equation}
where $\mathcal F _{\Lambda ^c}$ denotes the sigma algebra generated by the disorder in $\Lambda ^c \times \mathbb R ^n$.
\end{thm}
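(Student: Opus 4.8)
The plan is to compare the true ground configuration $\varphi^\eta$ with a competitor obtained by shifting it by a multiple of the Green's function bump, and to exploit the favorable realization of the disorder encoded by the event $\{\hat\eta_\Lambda \le -Ch^2\ell^{d-2}\}$. Concretely, I would first condition on $\mathcal F_{\Lambda^c}$ and on $\hat\eta_\Lambda$, so that we work with the disorder restricted to $\Lambda\times\R^n$, of which we only know the single linear functional $\hat\eta_\Lambda$. Fix the unit vector $e_1$ and consider the shift $s = \alpha h\, \psi\, e_1$, where $\psi:\Z^d\to\R$ is (a smoothed/capped version of) the indicator of $\Lambda$, extended harmonically to a slightly larger box, chosen so that $\psi\equiv 1$ on a constant fraction of $\Lambda$ and $\|\nabla\psi\|^2_{\Lambda}\le C\ell^{d-2}$ (the Dirichlet energy of the harmonic extension of a box indicator scales like the surface area times the width, i.e.\ $\ell^{d-1}\cdot\ell^{-1}\cdot\ell = \ell^{d-2}$; more carefully one uses the standard "tent" profile). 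Then the main identity (Proposition~\ref{prop:main identity}) applied with this $s$ gives
\begin{equation*}
H^{\eta^s,\Lambda}(\varphi^\eta + s) - H^{\eta,\Lambda}(\varphi^\eta) = (\varphi^\eta,-\Delta_\Lambda s) + \tfrac12\|\nabla s\|_\Lambda^2 - \sum_{v\in\Lambda}\eta_{v,-s_v}.
\end{equation*}

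The key point is that $\eta^s$ has the same law as $\eta$ (by~\eqref{eq:distribution of shifted disorder}), so $\GE^{\eta^s,\Lambda}$ and $\GE^{\eta,\Lambda}$ are equidistributed conditionally on $\mathcal F_{\Lambda^c}$ — but more is true: the term $\sum_{v\in\Lambda}\eta_{v,-s_v}$ is, by the choice of $\psi$, essentially $\sum_{v:\psi_v=1}\eta_{v,-\alpha h e_1}$, which is a large part of $\tfrac12\hat\eta_\Lambda$ (after symmetrizing $\pm\alpha h e_1$, which is why $\hat\eta_\Lambda$ is defined with both signs). On the event $\{\hat\eta_\Lambda\le -Ch^2\ell^{d-2}\}$ this is very negative — of order $-Ch^2\ell^{d-2}$ — and this beats the positive Dirichlet cost $\tfrac12\|\nabla s\|_\Lambda^2 \le C\alpha^2 h^2\ell^{d-2}$ once $C$ is large relative to $\alpha$. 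The remaining term $(\varphi^\eta,-\Delta_\Lambda s)$ is controlled by Lemma~\ref{lem:fluctuation and concentration}: we argue that with conditional probability bounded away from $1$ it is smaller in absolute value than, say, $\tfrac14 C h^2\ell^{d-2}$, using that $D(s)$ is polynomial in $\ell$ and $h$ while the deviation we forbid grows like $h^2\ell^{d-2}$ — this is exactly the regime where the stretched-exponential bound is small. Combining, on this event (intersected with the high-probability event from Lemma~\ref{lem:fluctuation and concentration}) we get $H^{\eta^s,\Lambda}(\varphi^\eta+s) < H^{\eta,\Lambda}(\varphi^\eta) = \GE^{\eta,\Lambda}$, i.e.\ $\GE^{\eta^s,\Lambda} < \GE^{\eta,\Lambda}$, and moreover the minimizer $\varphi^{\eta^s}$ must be "close to $\varphi^\eta + s$" in the sense that a constant fraction of its vertices in $\Lambda$ have $\|\varphi^{\eta^s}_v\| \ge \|s_v\| - (\text{small}) \ge \alpha h/2 > h$ after rescaling $\alpha$; then we translate this back to a statement about $\varphi^\eta$ using that $\varphi^{\eta^s}\eqd\varphi^\eta$ conditionally, and that the conditioning event has comparable conditional probability for $\eta$ and $\eta^s$ (it depends only on $\hat\eta_\Lambda$ and $\mathcal F_{\Lambda^c}$, both of which are preserved in law).

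More carefully, the cleanest route avoids talking about $\varphi^{\eta^s}$ directly: I would instead run a second-moment / energy-comparison argument showing that {\em any} configuration whose energy is below $\GE^{\eta,\Lambda} $ cannot be too localized, OR — and this is likely what the authors do — use the identity together with the uniqueness of minimizers to deduce that if $H^{\eta^s,\Lambda}(\varphi^\eta+s)<H^{\eta^s,\Lambda}(\text{any localized competitor})$ then $\varphi^{\eta^s}$ is delocalized, and finally invoke the distributional identity $(\varphi^{\eta^s},\hat\eta_\Lambda,\mathcal F_{\Lambda^c})\eqd(\varphi^\eta,\hat\eta_\Lambda,\mathcal F_{\Lambda^c})$ to transfer the conclusion. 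The quantitative "constant fraction of vertices above height $h$" comes from the fact that $s_v = \alpha h e_1$ on a set of size $\ge c|\Lambda|$, plus a crude bound showing $\varphi^{\eta^s}$ cannot differ from $\varphi^\eta+s$ by more than $o(h)$ on more than a small fraction of these vertices — this last bound again follows from an energy comparison, since a large $\ell^2$-deviation would cost too much Dirichlet energy relative to the energy gain we have in hand.

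The main obstacle I anticipate is the last step: turning the {\em scalar} energy inequality $\GE^{\eta^s,\Lambda}<\GE^{\eta,\Lambda}$ into a {\em geometric} statement that a positive density of vertices is genuinely above height $h$. The subtlety is that the minimizer $\varphi^{\eta^s}$ could in principle achieve its low energy while staying close to $0$ on most of $\Lambda$ and only spiking near $\partial\Lambda$. Ruling this out requires a convexity/rigidity argument: because the elastic energy is a strictly convex quadratic form, a configuration close in energy to $\varphi^\eta+s$ must be close to it in Dirichlet norm, hence (via a Poincaré/Sobolev-type inequality on $\Lambda_\ell$, valid since $d\ge 1$ and we only need an $L^2$-in-$\Lambda$ control) close in $\ell^2(\Lambda)$, and then a Markov argument gives that a constant fraction of vertices is within $\alpha h/4$ of $s_v=\alpha h e_1$. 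Getting the constants to line up — the Dirichlet energy budget $\ell^{d-2}h^2$, the forbidden deviation of the linear functional, and the Poincaré constant on $\Lambda_\ell$ — is where the real work lies, and is presumably why the constant $C$ in the statement is allowed to depend on $d$ and $H$ and why $\alpha$ is a small fixed parameter.
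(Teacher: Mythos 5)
Your proposal identifies the right shift profile (a tent/harmonic bump of height $\asymp h$ with Dirichlet cost $\asymp h^2\ell^{d-2}$) and the right order of magnitude for the energy competition, but it misses the central mechanism of the paper's proof and contains a step that fails. The paper does not run a direct energy comparison between $\eta$ and $\eta^s$. Instead it \emph{resamples} the single Gaussian $\hat\eta_\Lambda$ (keeping the orthogonal part $\eta^\perp$ of the noise decomposition of Lemma~\ref{lem:noise} fixed), producing a coupled disorder $\zeta$, and defines the set $\Pi$ of ``localized'' surfaces via the covariance functional $\sum_{v\in\Lambda}\Cov(\eta_{v,\varphi_v},\hat\eta_\Lambda)$. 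The point of Lemma~\ref{lem:noise} is that this covariance is small when $\|\varphi_v\|\le h$ and large when $\|\varphi_v\|\ge\beta h$, so resampling $\hat\eta_\Lambda$ changes the energy of surfaces in $(\Pi+s)\cup(\Pi-s)$ by much more than it changes the energy of surfaces in $\Pi$; this differential is exactly what produces $\Delta_1+\Delta_2\gtrsim h^2\ell^{d-2}$ in Proposition~\ref{prop:Deloc2} on the conditioning event. Your direct approach cannot work as stated, because the event $\{\hat\eta_\Lambda\le -Ch^2\ell^{d-2}\}$ says nothing about the disorder at heights $\|t\|\le h$: the minimizer could be localized simply because the disorder near height $0$ happens to be equally favorable, and no comparison of $\GE^{\eta^s}$ with $\GE^{\eta}$ detects this. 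The resampling argument is precisely what isolates the information carried by $\hat\eta_\Lambda$ from the rest of the noise.

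Two further steps in your plan are genuinely broken. First, the ``convexity/rigidity'' argument — that a configuration close in energy to $\varphi^\eta+s$ must be close to it in Dirichlet norm and hence in $\ell^2(\Lambda)$ — is false here: only the elastic part of $H^{\eta,\Lambda}$ is convex, and the disorder term can fully compensate any elastic gap, so energy closeness implies nothing about configuration closeness. The paper avoids ever needing such a statement: delocalization of a positive density of vertices is read off directly from $\varphi^\eta\notin\Pi$ via the two-sided covariance bounds \eqref{eq:lemnoise1}--\eqref{eq:lemnoise1bis}, not from proximity to a shifted competitor. Second, your transfer-back step asserts that the conditioning is preserved because ``$\hat\eta_\Lambda$ is preserved in law'' under $\eta\mapsto\eta^s$; but $\hat\eta_\Lambda$ computed from $\eta^s$ is a different linear functional of the disorder (for $s_v=\alpha h e_1$ it becomes $\sum_v(\eta_{v,-2\alpha h e_1}-2\eta_{v,-\alpha h e_1})$ up to recentering), so the conditional laws do not match. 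The paper handles the analogous transfer with the monotonicity claim \eqref{eq:claim monotony}, which compares $\{\varphi^{\zeta}\notin\Pi\}$ with $\{\varphi^{\eta}\notin\Pi\}$ on the event $\{\hat\zeta\ge\hat\eta\}$ using that $\Pi$ is defined exactly by the covariance functional that the resampling perturbs. Without the resampling, the covariance-based $\Pi$, and the monotonicity comparison, the argument does not close.
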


Sections~\ref{sec:det}, \ref{sec:rand} and \ref{sec:proof} below are devoted to the proof of Theorem~\ref{theorem:83-1}. Section~\ref{sec:det} provides deterministic inputs required for the proof and Section~\ref{sec:rand} provides probabilistic inputs. In Section~\ref{sec:proof}, we use those ingredients in order to complete the proof.

In Sections~\ref{sec:dneq4} and \ref{sec:G} we use Theorem~\ref{theorem:83-1} in order to prove the main delocalization results of the paper (that is, the lower bounds in Theorems \ref{thm:main 123}, \ref{thm: main 4} and \ref{thm:main d>4}).

\subsection{Deterministic results}\label{sec:det}
Let $\Pi\subset\Omega^{\Lambda}$ be a closed set of surfaces. For an arbitrary environment $\eta$ such that $H^{\eta}$ admits a (finite) minimum on $\Pi$ we denote $\varphi^{\eta,\Pi}\coloneqq\arg\min_{\varphi\in\Pi}H^{\eta}(\varphi)$
and $\GE_{\Pi}^{\eta}\coloneqq H^{\eta}(\varphi^{\eta,\Pi})$. In here if there is more than one minimizer, we let $\varphi^{\eta,\Pi}$ be the minimizer that is the first in, say, lexicographic order (as a vector in $(\mathbb R ^n)^{\mathbb Z ^d}$). The following proposition is the main deterministic input of our proofs of delocalization. It gives a sufficient condition to have a delocalized surface. Roughly speaking, it says that if the gain in terms of noise of being shifted by $s$ compensates the extra cost of the shift in terms of gradient, then the surface has to delocalize.
\begin{prop}\label{prop:Deloc2}
Let $\Lambda\subset\Z^{d}$. Let $\Pi\subset\Omega^{\Lambda}$ be a closed set.
Let $s:\Z^{d}\to\R^{n}$ be supported on $\Lambda$. Let $\eta$ and $\zeta$
be two environments such that $\eta ,\zeta$ and the shifted versions $\zeta^s,\zeta^{-s}$ admit a minimum on $\Omega ^{\Lambda,\tau}$ and $\Pi$. Denote 
\begin{align*}
\Delta_{1} & \coloneqq\GE_{\Pi}^{\eta}-\GE_{\Pi}^{\zeta}\\
\Delta_{2} & \coloneqq\inf_{\varphi\in (\Pi+s)\cup(\Pi-s)}H^\zeta(\varphi)-H^\eta(\varphi)\\
\Delta_{3} & \coloneqq\frac{1}{2}(\varphi^{\zeta^{s}}-\varphi^{\zeta^{-s}},-\Delta s).
\end{align*}
Then
\begin{equation}\label{eq:deterministic delocalization}
\{\Delta_1+\Delta_2+\Delta_3>\|\nabla s\|_{\Lambda}^{2}\}\subset \{\varphi^{\zeta^{s}}\notin\Pi\}\cup \{\varphi^{\zeta^{-s}}\notin\Pi\}\cup  \{\varphi^{\eta}\notin\Pi\}.
\end{equation}
\end{prop}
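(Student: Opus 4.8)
The plan is to argue by contraposition: assume that all three of $\varphi^{\eta}$, $\varphi^{\zeta^{s}}$ and $\varphi^{\zeta^{-s}}$ lie in $\Pi$, and deduce that $\Delta_1+\Delta_2+\Delta_3\le\|\nabla s\|_\Lambda^2$. The key mechanism is the main identity (Proposition~\ref{prop:main identity}), which, for a shift function $\rho s$ and the environment $\zeta$, lets us compare $H^{\zeta^{\pm s}}(\varphi\pm s)$ with $H^{\zeta}(\varphi)$. The crucial observation is that the $\sum_{v}\zeta_{v,\mp s_v}$ terms appearing on applying the identity with $+s$ and with $-s$ are \emph{identical} — indeed $\eta^{s}_{v,t}$ is built from $\eta_{v,t-s_v}-\eta_{v,-s_v}$, so the recentering constant for the shift $+s$ is $\zeta_{v,-s_v}$ and for $-s$ it is $\zeta_{v,+s_v}$; wait — these are not literally equal, so instead I will \emph{average} the two instances of the identity, which is exactly why $\Delta_3$ is defined as the symmetrized linear term $\tfrac12(\varphi^{\zeta^s}-\varphi^{\zeta^{-s}},-\Delta s)$ and why only the \emph{average} of the two noise sums will enter. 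Concretely, applying Proposition~\ref{prop:main identity} with $\varphi=\varphi^{\zeta^{s},\Pi}$ and shift $-s$ (so that $\varphi - s\in\Pi-s$), and separately with $\varphi=\varphi^{\zeta^{-s},\Pi}$ and shift $+s$, and adding, the linear terms combine into $2\Delta_3$, the Dirichlet terms give $\|\nabla s\|_\Lambda^2$, and the noise sums combine in a way that can be bounded below using $\Delta_2$.

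In more detail, here is the order of steps. First I would record the two instances of the main identity:
\begin{equation*}
H^{\zeta}(\varphi^{\zeta^{s},\Pi}-s)=\GE_\Pi^{\zeta^s}-(\varphi^{\zeta^{s},\Pi},-\Delta s)+\tfrac12\|\nabla s\|_\Lambda^2+\!\!\sum_{v\in\Lambda}\!\zeta^{?}_{v,?},
\end{equation*}
and the analogous one with $s\mapsto -s$, being careful with the recentering. Second, since $\varphi^{\zeta^{s},\Pi}-s\in\Pi-s$ and $\varphi^{\zeta^{-s},\Pi}+s\in\Pi+s$, the definition of $\Delta_2$ gives $H^{\zeta}(\varphi^{\zeta^{s},\Pi}-s)\ge H^{\eta}(\varphi^{\zeta^{s},\Pi}-s)+\Delta_2\ge \GE_\Pi^{\eta}+\Delta_2$ (the last step because $\varphi^{\zeta^{s},\Pi}-s\in\Pi\pm s$, and $H^\eta$ minimized over $\Pi$... — more precisely I will bound $H^\eta(\varphi^{\zeta^s,\Pi}-s)\ge\GE^{\eta}_{\Pi+(-s)}$ is not quite $\GE^\eta_\Pi$, so I instead use that under the contraposition hypothesis $\varphi^\eta\in\Pi$ and hence combine with $\Delta_1$ directly). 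Third, I add the two inequalities, use $\GE_\Pi^{\zeta^{\pm s}}=H^{\zeta^{\pm s}}(\varphi^{\zeta^{\pm s},\Pi})$ and the definition $\Delta_1=\GE_\Pi^\eta-\GE_\Pi^\zeta$, and rearrange so that the noise sums cancel against each other (this is the whole point of symmetrizing) and the remaining inequality reads $\Delta_1+\Delta_2+\Delta_3\le\|\nabla s\|_\Lambda^2$, contradicting membership in the left-hand event of~\eqref{eq:deterministic delocalization}.

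The step I expect to be the main obstacle is the bookkeeping of the recentering constants $\eta_{v,-s_v}$ coming from the definition~\eqref{eq:eta s def} of $\eta^s$: unlike in the stationary case of our earlier work, the two applications of the main identity (with $+s$ and $-s$, and with $\varphi$ being the $\Pi$-minimizer of $\zeta^{s}$ resp.\ $\zeta^{-s}$) do not produce literally cancelling noise terms, and one must check that the \emph{difference} of the two recentering sums is precisely what gets absorbed — this is where the symmetrized form of $\Delta_3$ and the $\inf$ over $(\Pi+s)\cup(\Pi-s)$ in $\Delta_2$ are engineered to make everything fit. A secondary point requiring care is that all four minima ($\GE^\eta_\Pi$, $\GE^\zeta_\Pi$, $\GE^{\zeta^s}_\Pi$, $\GE^{\zeta^{-s}}_\Pi$) are assumed finite and attained, so the manipulations above are legitimate; once the cancellation is set up correctly the rest is a short rearrangement.
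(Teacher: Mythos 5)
Your overall strategy---contraposition plus applications of the main identity to the $\Pi$-constrained minimizers $\varphi^{\zeta^{\pm s},\Pi}$ shifted back by $\mp s$, with the lower bound $H^{\zeta}(\varphi^{\zeta^{s},\Pi}-s)\ge H^{\eta}(\varphi^{\zeta^{s},\Pi}-s)+\Delta_2\ge\GE^{\eta}+\Delta_2$ and the identification $\GE^{\eta}=\GE^{\eta}_{\Pi}$ under the contrapositive hypothesis $\varphi^{\eta}\in\Pi$---reproduces the first half of the paper's argument, and your resolution of the $\GE^{\eta}_{\Pi-s}$ versus $\GE^{\eta}_{\Pi}$ worry is the correct one. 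Summing your two instances of the identity one arrives at
\[
2\GE^{\eta}_{\Pi}+2\Delta_2\le\GE^{\zeta^{s}}_{\Pi}+\GE^{\zeta^{-s}}_{\Pi}-(\varphi^{\zeta^{s},\Pi}-\varphi^{\zeta^{-s},\Pi},-\Delta s)+\|\nabla s\|_{\Lambda}^{2}+\sum_{v\in\Lambda}(\zeta_{v,s_v}+\zeta_{v,-s_v}).
\]

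There is, however, a genuine gap in what follows. First, the two recentering sums produced by this pair of identities are $+\sum_{v}\zeta_{v,-s_v}$ and $+\sum_{v}\zeta_{v,s_v}$: they carry the \emph{same} sign and therefore add rather than cancel, so the symmetrization in $s\mapsto-s$ does not by itself make the noise disappear. Second, and more importantly, nothing in your outline relates $\GE^{\zeta^{s}}_{\Pi}+\GE^{\zeta^{-s}}_{\Pi}$ to $\GE^{\zeta}_{\Pi}$, so ``the definition of $\Delta_1$'' cannot simply be invoked: $\Delta_1$ involves $\GE^{\zeta}_{\Pi}$, which never appears in your computation. The missing ingredient is a second, independent pair of applications of the main identity in the opposite direction: take $\varphi^{\zeta,\Pi}\pm s$ as test configurations for the \emph{unconstrained} ground energies $\GE^{\zeta^{\pm s}}$, which yields
\[
\GE^{\zeta^{s}}+\GE^{\zeta^{-s}}\le 2\GE^{\zeta}_{\Pi}+\|\nabla s\|_{\Lambda}^{2}-\sum_{v\in\Lambda}(\zeta_{v,s_v}+\zeta_{v,-s_v}),
\]
with the noise sum now appearing with a \emph{minus} sign; the cancellation takes place between the two halves of the argument, not within one. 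This second half is also the only place where the hypothesis $\varphi^{\zeta^{\pm s}}\in\Pi$ is actually consumed: it lets you replace $\GE^{\zeta^{\pm s}}_{\Pi}$ by $\GE^{\zeta^{\pm s}}$ so the two displays can be chained, and it identifies the constrained minimizers in the first display with the unconstrained ones appearing in $\Delta_3$. Chaining the two displays, substituting $\GE^{\zeta}_{\Pi}=\GE^{\eta}_{\Pi}-\Delta_1$, and cancelling then gives $\Delta_1+\Delta_2+\Delta_3\le\|\nabla s\|_{\Lambda}^{2}$. Without this second half the argument does not close.
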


One advantage of the quantity $\Delta_3$ is that it may be controlled via Markov's inequality for general sets $\Pi$; see Corollary~\ref{cor:markov}.  

In several places, we will make use of the shift function of the following proposition. We refer to \cite[Proposition~4.2]{dembin2024minimal} for the proof.
\begin{prop}
\label{prop:const_laplace}Fix $d\in\N$, $\ep>0$. Let $L\ge 1$.
Write $\Lambda_{L}^{-}=\Lambda_{\left\lceil \left(1-\frac{\ep}{2d}\right)L\right\rceil }$
and note that $|\Lambda_{L}^{-}|/|\Lambda_{L}|\ge1-\ep$. There is
a function $\pi:\Z^{d}\to\R$ satisfying the following conditions:
\begin{align}
\pi_{v} & =0 & \,\,\,\,\forall v\notin\Lambda_{L}\label{eq:s_0}\\
\Delta\pi_{v} & \le C_{\ep,d}L^{-2} & \,\,\,\,\forall v\in\Z^{d}\label{eq:s_laplace}\\
\pi_{v} & \ge 1, & \,\,\,\,\forall v\in\Lambda_{L}^{-}\label{eq:s_min}\\
\|\nabla\pi\|^{2} & \le C_{\ep,d}L^{d-2}.\label{eq:s_energy}
\end{align}
\end{prop}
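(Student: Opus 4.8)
\textbf{Proof proposal for Proposition~\ref{prop:const_laplace}.}

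The plan is to build $\pi$ explicitly as a smoothed indicator of $\Lambda_L^-$, using convolution against a discrete approximate heat kernel (or, equivalently, against the Green's function of a box) to get a function that is $\ge 1$ on $\Lambda_L^-$, vanishes outside $\Lambda_L$, has Laplacian bounded by $O_\ep(L^{-2})$ pointwise, and has Dirichlet energy $O_\ep(L^{d-2})$. Concretely, fix an auxiliary scale $m:=\lceil \tfrac{\ep}{8d} L\rceil$ and set $\Lambda_L^{--}:=\Lambda_{\lceil (1-\frac{3\ep}{4d})L\rceil}$, so that $\Lambda_L^- \subset \Lambda_L^{--}+\Lambda_m \subset \Lambda_L$ with all the nested containments having margins of order $\ep L$. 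Let $K=K_m:\Z^d\to[0,\infty)$ be a nonnegative kernel with $\sum_x K(x)=1$, supported on $\Lambda_m$, and chosen so that $\|\Delta K\|_1=\sum_x |\Delta K(x)| \le C_d m^{-2}$ and $\sum_x |K(x)-K(x+e)| \le C_d m^{-1}$ for each lattice direction $e$; a natural choice is $K(x)\propto (1-\|x\|_\infty/m)_+$ iterated a couple of times (a discrete B-spline), or the law of a lazy random walk run for $\sim m^2$ steps and conditioned to stay in $\Lambda_m$. Then define $\pi := \mathbf 1_{\Lambda_L^{--}} * K$, i.e. $\pi_v := \sum_{x} \mathbf 1_{\Lambda_L^{--}}(v-x) K(x)$.

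The verification then proceeds step by step. \emph{Support:} since $K$ is supported in $\Lambda_m$ and $\Lambda_L^{--}$ is at distance $\ge \ep L/(4d) > m$ from $\Lambda_L^c$, the sum defining $\pi_v$ vanishes once $v\notin\Lambda_L$; this gives~\eqref{eq:s_0}. \emph{Lower bound:} for $v\in\Lambda_L^-$, every $x\in\Lambda_m$ satisfies $v-x\in\Lambda_L^{--}$ (again by the margin $\ep L/(4d) > m$), so $\pi_v=\sum_{x\in\Lambda_m} K(x)=1\ge 1$, giving~\eqref{eq:s_min}. \emph{Laplacian bound:} since $\Delta$ commutes with convolution, $\Delta\pi = \mathbf 1_{\Lambda_L^{--}} * \Delta K$, hence $|\Delta\pi_v| \le \sum_x |\Delta K(x)| = \|\Delta K\|_1 \le C_d m^{-2} \le C_{\ep,d} L^{-2}$, which is~\eqref{eq:s_laplace}. \emph{Dirichlet energy:} by Green's identity~\eqref{eq:Green's identity}, $\|\nabla\pi\|^2 = (\pi,-\Delta\pi) = -(\mathbf 1_{\Lambda_L^{--}}*K,\, \mathbf 1_{\Lambda_L^{--}}*\Delta K)$. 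The key point is that $\Delta K$, and therefore $\Delta \pi = \mathbf 1_{\Lambda_L^{--}}*\Delta K$, is "small in $\ell^1$ away from the boundary of $\Lambda_L^{--}$": because $K$ and $\Delta K$ are supported in $\Lambda_m$ and $\sum_x \Delta K(x)=0$, the function $\mathbf 1_{\Lambda_L^{--}}*\Delta K$ is supported within distance $m$ of $\partial\Lambda_L^{--}$, a set of $O_\ep(L^{d-1})$ vertices, and is bounded there by $\|\Delta K\|_1 \le C_d m^{-2}$; since $0\le \pi\le 1$, we get $\|\nabla\pi\|^2 \le C_d m^{-2}\cdot O_\ep(L^{d-1}) \le C_{\ep,d} L^{d-3}$, which is even better than~\eqref{eq:s_energy}. (If one prefers to avoid this cancellation argument, one can bound $\|\nabla\pi\|^2 = \sum_{u\sim v}|\pi_u-\pi_v|^2$ directly: $\pi_u-\pi_v = \mathbf 1_{\Lambda_L^{--}}*(K(\cdot)-K(\cdot+(u-v)))$, which is bounded by $\sum_x|K(x)-K(x+e)|\le C_d m^{-1}$ and supported within distance $m$ of $\partial\Lambda_L^{--}$, giving the same conclusion.)

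The main obstacle is none of the individual estimates but rather the bookkeeping of the three nested boxes and the choice of the kernel $K$: one must pick the intermediate enlargement $\Lambda_L^{--}$ and the mollification radius $m$ (both comparable to $\ep L$) so that simultaneously (i) the convolution support stays inside $\Lambda_L$, (ii) the plateau covers all of $\Lambda_L^-$, and (iii) $m$ is large enough to make $m^{-2}$ smaller than $C_{\ep,d} L^{-2}$ with the constant absorbing $\ep$. Since all margins are linear in $L$ with $\ep$-dependent constants, this is routine but is the place where care is needed. A cleaner alternative that sidesteps constructing $K$ by hand is to take $\pi$ to be the probability that a simple random walk started at $v$, killed on exiting $\Lambda_L$, hits $\Lambda_L^{--}$ before exiting — equivalently a suitable truncation of $\sum_{w\in\Lambda_L^{--}} (-\Delta_{\Lambda_L}) \overline{(\cdot)}$ applied to $G_{\Lambda_L}$; the discrete Harnack inequality and the Green's function bounds of Lemma~\ref{lem:Green} then yield all four properties, though at the cost of invoking heavier machinery. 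Either way, once $\pi$ is in hand the verification is immediate from Green's identity and the support considerations above; in any case the paper defers to \cite[Proposition~4.2]{dembin2024minimal} for the full details.
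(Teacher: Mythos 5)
Your mollified-indicator construction is a viable way to prove the proposition (the paper itself gives no argument here, deferring entirely to \cite[Proposition~4.2]{dembin2024minimal}), but as written it contains two concrete errors. The first is that the nesting of boxes is backwards. For the plateau property you need that for every $v\in\Lambda_L^-$ and every $x$ in the support $\Lambda_m$ of $K$ the point $v-x$ lies in the set whose indicator you convolve, i.e.\ you need $\Lambda_L^-+\Lambda_m\subset\Lambda_L^{--}$, so the intermediate box must be \emph{larger} than $\Lambda_L^-$ by a margin exceeding $m$ while still at distance more than $m$ from $\Lambda_L^c$. Your choice $\Lambda_L^{--}=\Lambda_{\lceil(1-\frac{3\ep}{4d})L\rceil}$ is \emph{smaller} than $\Lambda_L^-=\Lambda_{\lceil(1-\frac{\ep}{2d})L\rceil}$, by a margin of order $\frac{\ep}{4d}L>m$, so the claimed containment $\Lambda_L^-\subset\Lambda_L^{--}+\Lambda_m$ fails; worse, for $v\in\Lambda_L^-$ near $\partial\Lambda_L^-$ the whole translate $v-\Lambda_m$ lies outside $\Lambda_L^{--}$, hence $\pi_v=0$ and \eqref{eq:s_min} fails outright. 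The repair is immediate: take, say, $\Lambda_L^{--}:=\Lambda_{\lceil(1-\frac{\ep}{4d})L\rceil}$ and $m:=\lceil\frac{\ep}{16d}L\rceil$ (and handle the finitely many $L\le C(d)/\ep$ separately, e.g.\ with $\pi=\mathbf 1_{\Lambda_L}$, absorbing them into $C_{\ep,d}$), after which your support and plateau verifications go through verbatim.

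The second error is the boundary-layer count in the energy estimate: the set of vertices within distance $m$ of $\partial\Lambda_L^{--}$ has cardinality of order $L^{d-1}m\asymp \ep L^{d}$, not $O_\ep(L^{d-1})$. Consequently your claimed improvement $\|\nabla\pi\|^2\le C_{\ep,d}L^{d-3}$ is false — for $d\ge 3$ it would contradict the capacity lower bound of order $L^{d-2}$ for any function equal to $1$ on $\Lambda_L^-$ and $0$ off $\Lambda_L$ — but the bound actually required by \eqref{eq:s_energy} still follows from your own display once the count is corrected: $\|\nabla\pi\|^2=(\pi,-\Delta\pi)\le\|\pi\|_\infty\,\|\Delta\pi\|_1\le 1\cdot C_d m^{-2}\cdot C_d L^{d-1}m\le C_{\ep,d}L^{d-2}$, and likewise the direct gradient variant gives $(C_d m^{-1})^2\cdot C_d L^{d-1}m\le C_{\ep,d}L^{d-2}$. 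The remaining ingredients are fine: a nonnegative kernel with $\sum_xK(x)=1$, $\|\Delta K\|_1\le C_d m^{-2}$ and $\sum_x|K(x)-K(x+e)|\le C_dm^{-1}$ does exist (the product of one-dimensional tent kernels is the cleanest choice), $\Delta$ commutes with convolution on $\Z^d$, the cancellation $\sum_x\Delta K(x)=0$ correctly localizes $\Delta\pi$ to the $m$-neighborhood of $\partial\Lambda_L^{--}$, and only the one-sided bound \eqref{eq:s_laplace} is needed, which your two-sided estimate covers. So the strategy is sound and essentially complete, but both slips — the inverted box inclusion and the miscounted boundary layer — need to be fixed for the proof to stand.
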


\begin{proof}[Proof of Proposition \ref{prop:Deloc2}]
    We have
\begin{equation}\label{eq:application main identity 1}
\begin{split}
    \GE^{\eta}\le H^{\eta}(\varphi^{\zeta^s,\Pi}-s)&\le  H^{\zeta}(\varphi^{\zeta^s,\Pi}-s)-\Delta_2\\
    &= \GE^{\zeta^s}_\Pi+ (\varphi^{\zeta^s,\Pi},\Delta s)+ \frac{1}{2}\|\nabla s\|_\Lambda^2-\Delta_2+ \sum_{v\in\Lambda}\zeta_{v,-s_v}
    \end{split}
\end{equation}
where we used Proposition~\ref{prop:main identity} in the last equality replacing $\varphi$ by $\varphi^{\zeta^s,\Pi}-s$ and $\eta$ by $\zeta$.
Similarly, by replacing $s$ by $-s$ in the previous inequality
\begin{equation*}
\begin{split}
    \GE^{\eta}\le  H^{\zeta}(\varphi^{\zeta^{-s},\Pi}+s)-\Delta_2 =\GE^{\zeta^{-s}}_\Pi+ (\varphi^{\zeta^{-s},\Pi},-\Delta s)+ \frac{1}{2}\|\nabla s\|_\Lambda^2-\Delta_2+ \sum_{v\in\Lambda}\zeta_{v,s_v}.
    \end{split}
\end{equation*}
Summing the two previous inequalities, we get
\begin{equation}\label{eq:000}
    2\GE^{\eta}\le \GE^{\zeta^s}_\Pi+\GE^{\zeta^{-s}}_\Pi+ \|\nabla s\|_\Lambda^2-2\Delta_2-(\varphi^{\zeta^{s},\Pi}-\varphi^{\zeta^{-s},\Pi},-\Delta s)+ \sum_{v\in\Lambda}(\zeta_{v,s_v}+\zeta_{v,-s_v}).
\end{equation}
Moreover, we have 
\begin{equation}
\GE^ {\zeta^{s}}\le H^ {\zeta^ s}(\varphi^ {\zeta ,\Pi}+s)= \GE^ {\zeta}_\Pi+(\varphi^ {\zeta ,\Pi},-\Delta s)+\frac{1}{2}\|\nabla s\|_\Lambda^2- \sum_{v\in\Lambda}\zeta_{v,-s_v}
\end{equation}
where we used Proposition~\ref{prop:main identity} in the last equality.
Similarly, by replacing $s$ by $-s$ in the previous inequality
\begin{equation}
\GE^ {\zeta^{-s}}\le \GE^ {\zeta}_\Pi+(\varphi^ {\zeta ,\Pi},\Delta s)+\frac{1}{2}\|\nabla s\|_\Lambda^2-  \sum_{v\in\Lambda}\zeta_{v,s_v}.
\end{equation}
So summing the two previous inequalities yields
\begin{equation}\label{eq:001}
\begin{split}
\GE^ {\zeta ^{ s}}+ \GE^ {\zeta ^{- s}}&\le 2\GE^ {\zeta}_\Pi+\|\nabla s\|_\Lambda^2- \sum_{v\in\Lambda}(\zeta_{v,s_v}+\zeta_{v,-s_v})\\
&\le 2\GE^ {\eta}_\Pi- 2\Delta_1 +\|\nabla s\|_\Lambda^2- \sum_{v\in\Lambda}(\zeta_{v,s_v}+\zeta_{v,-s_v}).
\end{split}
\end{equation}
Finally, on the event $\{\varphi^{\zeta^{s}}\in\Pi\}\cap \{\varphi^{\zeta^{-s}}\in\Pi\}$, we get combining inequalities \eqref{eq:000} and \eqref{eq:001}
\begin{equation}
    2\GE^\eta\le 2\GE^\eta_\Pi +2\|\nabla s\|_\Lambda^2-2(\Delta_1+\Delta_2+\Delta_3).
\end{equation}
Note that $ \GE^\eta< \GE^\eta_\Pi$ implies that $\varphi^{\eta}\notin\Pi$. 
It follows that 
\[\{\Delta_1+\Delta_2+\Delta_3>\|\nabla s\|_{\Lambda}^{2}\}\subset \{\varphi^{\zeta^{s}}\notin\Pi\}\cup \{\varphi^{\zeta^{-s}}\notin\Pi\}\cup  \{\varphi^{\eta}\notin\Pi\}.\]
The result follows.
\end{proof}

\subsection{Probabilistic results}\label{sec:rand}

We first introduce some useful results. For $s\in\Omega^ \Lambda$,
define $H_{s}^{\eta}(\varphi)\coloneqq H^{\eta^{-s}}(\varphi-s)$.
Accordingly, let $\varphi_{s}^{\eta}\coloneqq\varphi^{\eta^{-s}}+s$ be the
minimizer of $H_{s}^{\eta}(\varphi)$. 
\begin{prop}
\label{prop:close_surf_markov}Fix $\Lambda$ and $\eta$ be a random environment.
Let $s\in\Omega^{\Lambda}$. Denote $D=(\varphi_s^{\eta}-\varphi^{\eta},-\Delta s\,)$. Assume $\eta^s$ and $\eta$ have the same distribution.
Then, $0\le H^{\eta}(\varphi_{s}^{\eta})-H^{\eta}(\varphi^{\eta})\le D$
and for every $a>0$ 

\begin{align*}
\P(\,D & \le a\|\nabla s\|^{2}\,)\ge1-\frac{1}{a}.
\end{align*}
\end{prop}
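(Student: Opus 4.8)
The plan is to obtain the two-sided bound of Part~1 from a clean affine relation between $H^\eta$ and $H_s^\eta$, and then to derive the Markov-type estimate of Part~2 from the nonnegativity of $D$ proved in Part~1 together with the distributional symmetry assumed for $\eta$.

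For Part~1, I would first note that, since $s$ vanishes off $\Lambda$, translation by $-s$ is a bijection of $\Omega^\Lambda$ onto itself, so $\varphi_s^\eta=\varphi^{\eta^{-s}}+s$ minimizes $H_s^\eta(\varphi)=H^{\eta^{-s}}(\varphi-s)$ over the \emph{same} configuration space $\Omega^\Lambda$ over which $\varphi^\eta$ minimizes $H^\eta$, and both minimizers lie in $\Omega^\Lambda$. Applying the main identity (Proposition~\ref{prop:main identity}) with environment $\eta$ and shift $-s$ and rearranging gives, for every $\varphi\in\Omega^\Lambda$,
\begin{equation*}
H^\eta(\varphi)=H_s^\eta(\varphi)+(\varphi,-\Delta s)+R,\qquad R:=\sum_{v\in\Lambda}\eta_{v,s_v}-\tfrac12\|\nabla s\|^2 ,
\end{equation*}
with the constant $R$ independent of $\varphi$. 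Evaluating this at $\varphi=\varphi_s^\eta$ and at $\varphi=\varphi^\eta$ and subtracting,
\begin{equation*}
H^\eta(\varphi_s^\eta)-H^\eta(\varphi^\eta)=\bigl(H_s^\eta(\varphi_s^\eta)-H_s^\eta(\varphi^\eta)\bigr)+D ,
\end{equation*}
where the bracket is $\le0$ because $\varphi_s^\eta$ minimizes $H_s^\eta$, giving $H^\eta(\varphi_s^\eta)-H^\eta(\varphi^\eta)\le D$; and the left-hand side is $\ge0$ because $\varphi^\eta$ minimizes $H^\eta$ over $\Omega^\Lambda\ni\varphi_s^\eta$. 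This is exactly Part~1.

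For Part~2, Part~1 shows $D\ge0$, so by Markov's inequality it suffices to prove $\E[D]\le\|\nabla s\|^2$. Substituting $\varphi_s^\eta=\varphi^{\eta^{-s}}+s$ and using $(s,-\Delta s)=\|\nabla s\|^2$,
\[
D=(\varphi^{\eta^{-s}},-\Delta s)+\|\nabla s\|^2-(\varphi^\eta,-\Delta s) ,
\]
so the claim reduces to $\E[(\varphi^{\eta^{-s}},-\Delta s)]=\E[(\varphi^\eta,-\Delta s)]$. This follows from the hypothesis $\eta\eqd\eta^{s}$: applying the measurable shift map $\zeta\mapsto\zeta^{-s}$ to both sides, and using $(\eta^{s})^{-s}=\eta$ (recall $\eta_{v,0}=0$), yields $\eta^{-s}\eqd\eta$, hence $\varphi^{\eta^{-s}}\eqd\varphi^\eta$ since the ground configuration is a measurable functional of the environment, a.s.\ unique by Proposition~\ref{prop:existence}. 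Pairing against the finitely supported $-\Delta s$ gives the equality of means, whence $\E[D]=\|\nabla s\|^2$ and $\P(D>a\|\nabla s\|^2)\le 1/a$, which is the stated inequality.

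The one place I expect to need a little care is the integrability justifying the passage to expectations of $(\varphi^\eta,-\Delta s)$, namely $\E\|\varphi^\eta_v\|<\infty$ for the finitely many $v$ in the support of $\Delta s$. This is not circular: it is supplied by the a priori control on the minimal surface (Proposition~\ref{prop:existence} and the crude estimates of Section~\ref{sec:assumptions for eta white}, cf.\ Corollary~\ref{cor:maxphi}), and one may in any case avoid relying on measurability of the tie-breaking rule by splitting $(\varphi^\eta,-\Delta s)$ into positive and negative parts and applying $\varphi^{\eta^{-s}}\eqd\varphi^\eta$ to each. Everything else is immediate from the main identity and the minimality of $\varphi^\eta$ and $\varphi_s^\eta$.
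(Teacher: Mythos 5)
Your proof is correct and follows essentially the same route as the paper: both derive the affine relation $H^\eta(\varphi)=H_s^\eta(\varphi)+(\varphi,-\Delta s)+R$ (you via the main identity, the paper by direct expansion), sandwich $H^\eta(\varphi_s^\eta)-H^\eta(\varphi^\eta)$ between $0$ and $D$ using the two minimality properties, and then compute $\E D=\|\nabla s\|^2$ from $\eta^{-s}\eqd\eta$ before applying Markov. Your extra remarks on the bijectivity of the shift, on $(\eta^s)^{-s}=\eta$, and on integrability only make explicit points the paper leaves implicit.
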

The following corollary follows easily by applying the previous proposition to the disorder~$\eta^s$ and the shift $2s$ and with $a$ being $a/2$. This corollary will be important to control $\Delta_3$ of Proposition \ref{prop:Deloc2}.
\begin{cor}\label{cor:markov}Fix $\Lambda$ and $\eta$.
Let $s\in\Omega^{\Lambda}$. If $\eta^s$ and $\eta^{-s}$ have the same distribution then for any $a>2$
\begin{align*}
\P \big( (\varphi^{\eta^{-s}}-\varphi^{\eta^s},-\Delta s\,) & \le (a-2)\|\nabla s\|^{2}\, \big)\ge1-\frac{2}{a}.
\end{align*}
    
\end{cor}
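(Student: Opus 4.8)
The plan is to follow the hint and apply Proposition~\ref{prop:close_surf_markov} to the environment $\eta^{s}$, the shift $2s$ and the constant $a/2>0$, then unwind the definitions. First I would record the cocycle identity $(\eta^{w})^{w'}=\eta^{w+w'}$, valid for all $w,w':\Z^{d}\to\R^{n}$ and immediate from~\eqref{eq:eta s def}; in particular $(\eta^{s})^{-2s}=\eta^{-s}$ and $(\eta^{s})^{2s}=\eta^{3s}$. Next I would verify the distributional hypothesis needed to invoke Proposition~\ref{prop:close_surf_markov} for the pair $(\eta^{s},2s)$, namely that $(\eta^{s})^{2s}=\eta^{3s}$ has the same law as $\eta^{s}$. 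Writing $\Phi_{w}$ for the measurable map $\eta\mapsto\eta^{w}$ and $\mu$ for the law of $\eta$, the cocycle identity says $\Phi_{w}\circ\Phi_{w'}=\Phi_{w+w'}$, so the assumption $\eta^{s}\eqd\eta^{-s}$, i.e.\ $(\Phi_{s})_{*}\mu=(\Phi_{-s})_{*}\mu$, pushes forward under $\Phi_{s}$ to $(\Phi_{2s})_{*}\mu=(\Phi_{0})_{*}\mu$, and pushing this forward once more under $\Phi_{s}$ gives $(\Phi_{3s})_{*}\mu=(\Phi_{s})_{*}\mu$, i.e.\ $\eta^{3s}\eqd\eta^{s}$. (For the fractional Brownian disorder of this paper this step is unnecessary, since~\eqref{eq:distribution of shifted disorder} already gives $\eta^{w}\eqd\eta$ for every shift $w$.)

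Now I would apply Proposition~\ref{prop:close_surf_markov} with environment $\eta^{s}$, shift $2s$ and parameter $a/2$. From the definition $\varphi_{s'}^{\eta'}=\varphi^{(\eta')^{-s'}}+s'$ we get $\varphi_{2s}^{\eta^{s}}=\varphi^{(\eta^{s})^{-2s}}+2s=\varphi^{\eta^{-s}}+2s$, so the quantity $D$ of the proposition, specialized to this data, equals
\[
\bigl(\varphi^{\eta^{-s}}+2s-\varphi^{\eta^{s}},\,-\Delta(2s)\bigr)=2\bigl(\varphi^{\eta^{-s}}-\varphi^{\eta^{s}},\,-\Delta s\bigr)+4\|\nabla s\|^{2},
\]
where I used Green's identity $(s,-\Delta s)=\|\nabla s\|^{2}$. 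Since $\|\nabla(2s)\|^{2}=4\|\nabla s\|^{2}$, the proposition's bound $\P(D\le\tfrac a2\|\nabla(2s)\|^{2})\ge1-\tfrac2a$ becomes, after substituting the displayed identity for $D$ and rearranging,
\[
\P\bigl((\varphi^{\eta^{-s}}-\varphi^{\eta^{s}},-\Delta s)\le(a-2)\|\nabla s\|^{2}\bigr)\ge1-\tfrac2a,
\]
which is the assertion; the hypothesis $a>2$ is exactly what keeps this nonvacuous.

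This is a direct specialization of Proposition~\ref{prop:close_surf_markov}, so no genuine obstacle arises. The only points requiring care are the sign bookkeeping in the definitions $\varphi_{s}^{\eta}=\varphi^{\eta^{-s}}+s$ and $H_{s}^{\eta}(\varphi)=H^{\eta^{-s}}(\varphi-s)$, and confirming that the distributional symmetry of $\eta$ is inherited by the shifted environment $\eta^{s}$ under the further shift $2s$ — the point isolated in the first paragraph.
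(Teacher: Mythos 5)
Your proposal is correct and is exactly the paper's intended argument: the paper proves the corollary by the one-line remark that one applies Proposition~\ref{prop:close_surf_markov} to the disorder $\eta^{s}$, the shift $2s$, and parameter $a/2$, which is precisely what you carry out. Your verification of the cocycle identity $(\eta^{s})^{2s}=\eta^{3s}\eqd\eta^{s}$ and the bookkeeping $D'=2(\varphi^{\eta^{-s}}-\varphi^{\eta^{s}},-\Delta s)+4\|\nabla s\|^{2}$ with $\|\nabla(2s)\|^{2}=4\|\nabla s\|^{2}$ are exactly the details the paper leaves implicit.
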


We will need the following noise decomposition that will be a key to understand the impact of resampling $\hat\eta_\Lambda$ (from Theorem \ref{theorem:83-1}) on the noise. In particular, we can find two levels $\beta\ge \alpha$ so that we know that the impact of resampling $\hat\eta_\Lambda$ is more important for points that have height larger than $\beta h $ compared to points that have height smaller than $h$. 
\begin{lem}[Noise decomposition]\label{lem:noise}Let $H\in(0,1)$. There exist $\beta\ge \alpha\ge 1$ such that the following holds. Let
$h\ge 1$ and  $\Lambda'\subset\Lambda$. 
Set 
\[
\hat\eta:=\sum_{v\in\Lambda'}\eta_{v,\alpha h e_1}+\eta_{v,-\alpha h e_1}.
\]
For all $v\in\Lambda$ and $t\in\R^{n}$ denote $\kappa_{v,t}\coloneqq\frac{\cov(\eta_{v,t},\hat \eta)}{\var(\hat \eta)}$
and define 
\[
\eta_{v,t}^{\perp}:=\eta_{v,t}-\kappa_{v,t}\hat \eta.
\]
Then, $\hat \eta$ and $(\eta_{v,t}^{\perp})_{v\in\Lambda,t\in\R^{n}}$
are independent, for $v\notin\Lambda'$, $  \Cov(\hat \eta, \eta_{v,t})=0$ and for any $v\in\Lambda'$ 
\begin{equation}\label{eq:lemnoise1}
 \forall t \in \R^n\qquad 0\le   \Cov(\hat \eta, \eta_{v,t})\le 4(\alpha h)^{2H},
\end{equation}
\begin{equation}\label{eq:lemnoise1bis}
    \sup_{\|t\|\le  h }\Cov(\hat \eta, \eta_{v,t})\le \frac 18 (\alpha h)^{2H}
\end{equation}
and
\begin{equation}\label{eq:lemnoise2}
    \inf_{\|t\|\ge \beta h }\Cov(\hat \eta, \eta_{v,t}) \ge  \frac 12 (\alpha h)^{2H}.
\end{equation} 
\end{lem}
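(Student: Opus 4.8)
\textbf{Proof proposal for Lemma~\ref{lem:noise}.}

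The plan is to treat $\hat\eta$ as a single centered Gaussian variable and compute everything through the explicit covariance structure of the fractional Brownian field. Since $(\eta_{v,\cdot})_{v\in\Z^d}$ are independent and, for $v\notin\Lambda'$, $\eta_{v,\cdot}$ does not enter $\hat\eta$, the claims $\Cov(\hat\eta,\eta_{v,t})=0$ for $v\notin\Lambda'$ and the independence of $\hat\eta$ from $(\eta^\perp_{v,t})$ are immediate: $\eta^\perp_{v,t}$ is by construction the residual of $\eta_{v,t}$ after projecting onto $\hat\eta$, hence uncorrelated with $\hat\eta$, and for jointly Gaussian variables uncorrelated implies independent. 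So the content is the three covariance inequalities~\eqref{eq:lemnoise1}--\eqref{eq:lemnoise2}, which we establish for a single fixed $v\in\Lambda'$.

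First I would reduce to a one-vertex computation. Using independence across vertices, $\Cov(\hat\eta,\eta_{v,t}) = \Cov(\eta_{v,\alpha h e_1}+\eta_{v,-\alpha h e_1},\,\eta_{v,t})$, so the quantity to understand is $g(t):=\Cov(\eta_{v,\alpha h e_1},\eta_{v,t})+\Cov(\eta_{v,-\alpha h e_1},\eta_{v,t})$. By the covariance formula~\eqref{eq:covariance} for the fractional Brownian field, this equals
\begin{equation*}
g(t)=\tfrac12\big(2\|t\|^{2H}+2(\alpha h)^{2H}-\|t-\alpha h e_1\|^{2H}-\|t+\alpha h e_1\|^{2H}\big).
\end{equation*}
By the scaling property~\eqref{eq:fBM self similarity} it suffices to work at $h=1$ and then rescale: writing $t=\alpha h u$ gives $g(t) = (\alpha h)^{2H} G(u)$ where $G(u):= \|u\|^{2H}+(\alpha h)^{0}\cdot 1 - \tfrac12\|u-e_1\|^{2H}-\tfrac12\|u+e_1\|^{2H}$ after collecting terms (more precisely $G(u)=1+\|u\|^{2H}-\tfrac12\|u-e_1\|^{2H}-\tfrac12\|u+e_1\|^{2H}$). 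Thus everything reduces to bounding the fixed, $h$-independent function $G$ on three regions of $u\in\R^n$: near the origin ($\|u\|\le 1/\alpha$, corresponding to $\|t\|\le h$), far away ($\|u\|\ge \beta/\alpha$, corresponding to $\|t\|\ge\beta h$), and everywhere ($0\le G\le 4$).

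The core estimates on $G$ are then elementary convexity/concavity facts about $r\mapsto r^{2H}$ with $0<H<1$ (so $2H\in(0,2)$). For the global upper bound $G\le 4$: the triangle inequality gives $\|u\pm e_1\|^{2H}\ge 0$ and also, since $r^{2H}$ is subadditive-like for the relevant range, one bounds $\tfrac12(\|u-e_1\|^{2H}+\|u+e_1\|^{2H})\ge 0$ trivially, while $\|u\|^{2H}\le \tfrac12(\|u-e_1\|^{2H}+\|u+e_1\|^{2H}) + C$ for a universal $C$; a cleaner route is: by concavity of $r\mapsto r^{2H}$ when $H\le 1/2$ and a direct two-term Taylor/midpoint estimate when $H>1/2$, one shows $|G(u)|\le 1 + |\,\|u\|^{2H}-\tfrac12\|u-e_1\|^{2H}-\tfrac12\|u+e_1\|^{2H}\,|\le 4$ uniformly; the lower bound $G\ge 0$ follows from positive-definiteness of the fractional Brownian covariance (since $g(t)=\Cov(\eta_{v,\alpha h e_1}+\eta_{v,-\alpha h e_1},\eta_{v,t})$ and, taking $t=\alpha h e_1$ or using Cauchy--Schwarz against $\var(\hat\eta)\ge 0$ together with $\Cov(\eta_{v,\alpha he_1},\eta_{v,-\alpha he_1})\ge 0$ — which itself is $\tfrac12(2(\alpha h)^{2H}-(2\alpha h)^{2H})\ge 0$ exactly because $2H\le 2$). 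For~\eqref{eq:lemnoise1bis}: when $\|u\|\le 1/\alpha$ is small, $G(u)$ is close to $G(0)=1+0-\tfrac12-\tfrac12 = 0$, so by continuity we can pick $\alpha$ large enough that $G(u)\le 1/8$ on $\|u\|\le 1/\alpha$, giving $g(t)\le\tfrac18(\alpha h)^{2H}$ for $\|t\|\le h$. For~\eqref{eq:lemnoise2}: as $\|u\|\to\infty$, $\|u\pm e_1\|^{2H} = \|u\|^{2H}(1 \pm 2\tfrac{u\cdot e_1}{\|u\|^2} + \tfrac1{\|u\|^2})^{H}= \|u\|^{2H} \pm H\|u\|^{2H-2}(2u\cdot e_1) + O(\|u\|^{2H-2})$, so the leading $\|u\|^{2H}$ terms cancel in $-\tfrac12(\cdot)-\tfrac12(\cdot)$ against... wait, they add: $G(u) = 1+\|u\|^{2H}-\tfrac12\|u-e_1\|^{2H}-\tfrac12\|u+e_1\|^{2H}$, and the two $\|u\|^{2H}$ from the half-terms cancel the $+\|u\|^{2H}$, leaving $G(u)\to 1$ as $\|u\|\to\infty$ (the symmetric first-order terms cancel and the remainder is $o(1)$). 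Hence for $\beta$ large (depending on the already-fixed $\alpha$), $G(u)\ge 1/2$ for all $\|u\|\ge\beta/\alpha$, i.e. $g(t)\ge\tfrac12(\alpha h)^{2H}$ for $\|t\|\ge\beta h$.

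The order of operations is: (i) fix $v\in\Lambda'$ and reduce to the scalar function $G$ via~\eqref{eq:covariance} and~\eqref{eq:fBM self similarity}; (ii) prove $G\ge 0$ from positive-definiteness and $G\le 4$ from the convexity estimates on $r^{2H}$; (iii) use $G(0)=0$ and continuity to choose $\alpha$ with $G\le 1/8$ on a neighborhood of $0$; (iv) use the $\|u\|\to\infty$ asymptotics $G(u)\to 1$ to choose $\beta\ge\alpha$ with $G\ge 1/2$ outside a large ball; (v) assemble, rescaling back to general $h$. I expect the main obstacle to be item (ii) — specifically the clean uniform bound $G\le 4$ — since the behavior of $r\mapsto r^{2H}$ differs qualitatively across $H<1/2$ (concave) and $H\ge 1/2$ (convex), so one must either split into cases or find a single convexity-type inequality (e.g.\ $|a^{2H}+b^{2H}-2c^{2H}|\le |a-c|^{2H}+|b-c|^{2H}$-type bounds, or second-difference estimates) that covers the whole range $2H\in(0,2)$ with an explicit universal constant; everything else is routine continuity and asymptotics. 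The choice of constant $4$ on the right of~\eqref{eq:lemnoise1} should be comfortably loose, so a slightly crude argument suffices there.
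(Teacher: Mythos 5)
Your overall strategy is the same as the paper's: reduce to the single scalar function $u\mapsto 2\|u\|^{2H}+2-\|u-e_1\|^{2H}-\|u+e_1\|^{2H}$ via the covariance formula~\eqref{eq:covariance} and scaling, then prove a global two-sided bound, a near-origin smallness bound (for~\eqref{eq:lemnoise1bis}), and a far-field limit equal to $1$ (for~\eqref{eq:lemnoise2}). Your soft continuity argument at the origin and your Taylor expansion at infinity are fine substitutes for the paper's quantitative versions, and your sketch of the upper bound $G\le 4$ via second-difference ($f''$) estimates is exactly what the paper's Claim~\ref{claim:estimate} does.

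There is, however, a genuine gap in your justification of the lower bound $\Cov(\hat\eta,\eta_{v,t})\ge 0$ in~\eqref{eq:lemnoise1}. Positive-definiteness of a covariance kernel $K$ means $\sum_{i,j}c_ic_jK(t_i,t_j)\ge 0$; it does not imply pointwise nonnegativity $K(s,t)\ge 0$, and Cauchy--Schwarz only bounds $|\Cov(X+Y,Z)|$ from above. Moreover your supporting claim that $\Cov(\eta_{v,\alpha h e_1},\eta_{v,-\alpha h e_1})=\tfrac12\big(2(\alpha h)^{2H}-(2\alpha h)^{2H}\big)\ge 0$ ``because $2H\le 2$'' is false for $H>1/2$: this quantity equals $(\alpha h)^{2H}(1-2^{2H-1})$, which is strictly negative for $H\in(1/2,1)$ (consistent with the $H=1$ case where $\Cov(B_1,B_{-1})=-1$). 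So the fBm covariance is genuinely not pointwise nonnegative in part of the parameter range, and even where it is, that would not yield $G\ge 0$ for all $t$. The correct argument, which is what the paper uses in~\eqref{eq:claim 2}, is analytic rather than probabilistic: writing $\|t\pm e_1\|^{2H}=(\|t\|^2+1\pm 2t_1)^H$, concavity of $x\mapsto x^H$ gives $\|t-e_1\|^{2H}+\|t+e_1\|^{2H}\le 2(\|t\|^2+1)^H$, and then subadditivity $x^H+1\ge(x+1)^H$ (valid for $0<H\le 1$) gives $2\|t\|^{2H}+2-2(\|t\|^2+1)^H\ge 0$. You need this nonnegativity for real — it is used downstream (e.g.\ in bounding $\Delta_1$ in the proof of Theorem~\ref{theorem:83-1}) — so this step must be repaired along these lines.
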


For the proof of Lemma~\ref{lem:noise} we will need the next claim. The proof of the claim is postponed to the appendix.

\begin{claim}\label{claim:estimate} Let $H\in(0,1)$. For every $t\in\R^n$
\begin{equation}\label{eq:claim 1} 2\|t\|^{2H}+2-\|t-e_1\|^{2H}-\|t+e_1\|^{2H}\le \min(2(\|t\|^{2H}+\|t\|),8)
       \end{equation}
       and
       \begin{equation}\label{eq:claim 2}
          2\|t\|^{2H}+2-\|t-e_1\|^{2H}-\|t+e_1\|^{2H}\ge  2g(\|t\|)\ge 0 
       \end{equation}
       where $g(x)\coloneqq 2x^{2H}+2- 2(x^2+1)^H$.
\end{claim}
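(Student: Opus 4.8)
\textbf{Proof proposal for Claim~\ref{claim:estimate}.}

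The plan is to prove both bounds by reducing to a one-variable problem. The left-hand side $F(t):=2\|t\|^{2H}+2-\|t-e_1\|^{2H}-\|t+e_1\|^{2H}$ depends on $t$ only through $\|t\|$ and the angle between $t$ and $e_1$; first I would argue that, for fixed $\|t\|=:x$, the quantity $\|t-e_1\|^{2H}+\|t+e_1\|^{2H}$ is minimized when $t=\pm x e_1$ and maximized when $t\perp e_1$. Indeed, writing $\|t\pm e_1\|^2=x^2+1\pm 2\langle t,e_1\rangle$ and $u:=\langle t,e_1\rangle\in[-x,x]$, the map $u\mapsto (x^2+1+2u)^H+(x^2+1-2u)^H$ is even and, since $0<H<1$, the function $y\mapsto y^H$ is concave, so this map is decreasing in $|u|$ (a standard Karamata/concavity argument). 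Hence for every $t$ with $\|t\|=x$,
\begin{equation}\label{eq:sandwich}
2(x^2+1)^H \;\le\; \|t-e_1\|^{2H}+\|t+e_1\|^{2H}\;\le\; (x+1)^{2H}+(x-1)^{2H}\quad(\text{if }x\ge 1),
\end{equation}
and for $0\le x\le 1$ the upper bound becomes $(1+x)^{2H}+(1-x)^{2H}$. Consequently $F(t)\le 2x^{2H}+2-2(x^2+1)^H=2g(x)$, which is the right inequality in~\eqref{eq:claim 2}; and $F(t)\ge 2x^{2H}+2-(x+1)^{2H}-(x-1)^{2H}$ for $x\ge 1$, with the analogous expression for $x<1$, which will feed into~\eqref{eq:claim 1}.

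For the lower bound~\eqref{eq:claim 2} it remains to check $g(x)\ge 0$ for all $x\ge 0$, i.e. $x^{2H}+1\ge (x^2+1)^H$. This is exactly the statement that $a\mapsto a^H$ is subadditive for $0<H\le 1$ applied to $a=x^2$, $b=1$: $(x^2+1)^H\le (x^2)^H+1^H=x^{2H}+1$. That disposes of~\eqref{eq:claim 2} entirely.

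For~\eqref{eq:claim 1} I need the two upper bounds $F(t)\le 2(\|t\|^{2H}+\|t\|)$ and $F(t)\le 8$. The bound $F(t)\le 2(x^{2H}+x)$: using the left inequality of~\eqref{eq:sandwich}, $F(t)\le 2x^{2H}+2-2(x^2+1)^H$, so it suffices to show $x^{2H}+1-(x^2+1)^H\le x$, equivalently $x^{2H}+1\le x+(x^2+1)^H$. For $x\ge 1$ this follows since $(x^2+1)^H\ge (x^2)^H=x^{2H}\ge\dots$ — more carefully, one compares term by term: $(x^2+1)^H-1\ge$ ? Here I would instead split by cases $x\le 1$ and $x\ge 1$. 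For $x\le 1$: $x^{2H}\le 1$ and we want $x^{2H}-x\le (x^2+1)^H-1$; since the right side is $\ge 0$ and... this needs the genuinely careful estimate, so I would use the mean value theorem on $s\mapsto s^H$ to write $(x^2+1)^H-1= H\xi^{H-1}x^2$ for some $\xi\in(1,x^2+1)$, hence $\ge H(x^2+1)^{H-1}x^2$, and bound $x^{2H}-x\le x^{2H}$, reducing to $x^{2H}\le x^2 + (\text{correction})$ — routine for $x\le 1$ since $2H\ge$ could be less than $2$. The cleanest route is: prove $\phi(x):=x+(x^2+1)^H-x^{2H}-1\ge 0$ by noting $\phi(0)=0$ and $\phi'(x)=1+2Hx(x^2+1)^{H-1}-2Hx^{2H-1}$, then check $\phi'\ge 0$ for $x\ge 0$ (near $0$ the term $-2Hx^{2H-1}$ could blow up if $H<1/2$, so again split: for $x$ small, $\phi(x)\ge x - x^{2H}\cdot(\text{something})$... ). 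The bound $F(t)\le 8$: from the left inequality of~\eqref{eq:sandwich}, $F(t)\le 2x^{2H}+2-2(x^2+1)^H\le 2x^{2H}+2-2x^{2H}=2\le 8$ using $(x^2+1)^H\ge x^{2H}$; so that one is immediate. Thus $\min(2(x^{2H}+x),8)$ dominates $F$, giving~\eqref{eq:claim 1}.

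The main obstacle I expect is the case analysis in establishing $x^{2H}+1\le x+(x^2+1)^H$ uniformly in $x\ge 0$ and $H\in(0,1)$, particularly for small $x$ when $H<1/2$, where $x^{2H}$ dominates $x$; one has to use that $(x^2+1)^H-1$ is of order $Hx^2$ and compare against $x^{2H}-x$, which is also small but only of order $x^{2H}$, so the inequality is tight near $x=0$ and needs the derivative bookkeeping above rather than a crude estimate. Everything else (the concavity/Karamata step giving~\eqref{eq:sandwich}, and subadditivity giving $g\ge 0$ and the $F\le 8$ bound) is short and standard.
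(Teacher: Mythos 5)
Your radial reduction is the right idea, but the two-sided bound you display is written backwards, and the reversal is fatal for \eqref{eq:claim 1}. Writing $u=\langle t,e_1\rangle\in[-x,x]$ with $x=\|t\|$, the map $u\mapsto(x^2+1+2u)^H+(x^2+1-2u)^H$ is, as you correctly say in words, decreasing in $|u|$ by concavity of $y\mapsto y^H$; hence its \emph{maximum} is $2(x^2+1)^H$ at $u=0$ and its \emph{minimum} is $(x+1)^{2H}+|x-1|^{2H}$ at $u=\pm x$, i.e.
\begin{equation*}
(x+1)^{2H}+|x-1|^{2H}\;\le\;\|t-e_1\|^{2H}+\|t+e_1\|^{2H}\;\le\;2(x^2+1)^H,
\end{equation*}
the opposite of what you wrote. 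Consequently the reduction yields the \emph{lower} bound $F(t)\ge 2x^{2H}+2-2(x^2+1)^H$ (which, combined with your correct subadditivity argument $(x^2+1)^H\le x^{2H}+1$, proves \eqref{eq:claim 2}; this is exactly the paper's argument for that part, modulo a factor-of-two typo in the paper's own definition of $g$), while the only \emph{upper} bound it yields is $F(t)\le 2x^{2H}+2-(x+1)^{2H}-|x-1|^{2H}$. Your proof of \eqref{eq:claim 1} instead relies twice on the false inequality $F(t)\le 2x^{2H}+2-2(x^2+1)^H$: the conclusion ``$F\le 2\le 8$'' is false (for $t=e_1$ one has $F=4-2^{2H}>2$ whenever $H<1/2$), and the inequality $x^{2H}+1-(x^2+1)^H\le x$ to which you reduce the bound $F\le 2(x^{2H}+x)$ — the step you flag as the main obstacle — is simply false for $H<1/2$ and small $x$, where the left side is of order $x^{2H}\gg x$ (e.g.\ $H=1/4$, $x=10^{-2}$ gives roughly $0.1$ versus $0.01$). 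No amount of derivative bookkeeping will close that; the approach to \eqref{eq:claim 1} must be replaced, not completed.

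For comparison, the paper proves \eqref{eq:claim 1} by splitting on $\|t\|$. For $\|t\|\le 3/2$ it uses $y^H\ge\min(1,y)$ to bound $\|t\pm e_1\|^{2H}=(\|t\|^2\pm2t_1+1)^H$ from below, which gives $F(t)\le 2\|t\|^{2H}$ or $F(t)\le 2\|t\|^{2H}-\|t\|^2+2|t_1|\le 2(\|t\|^{2H}+\|t\|)$ according to the sign of $\|t\|^2-2|t_1|$, and the bound by $8$ then follows from the restriction on $\|t\|$. For $\|t\|\ge 3/2$ it sets $f(s)=\|t-se_1\|^{2H}$ and bounds $2f(0)-f(1)-f(-1)\le\sup_{|s|\le1}|f''(s)|\le 2\sup_{|s|\le1}\|t-se_1\|^{2(H-1)}$, which is $O(1)$ since $\|t-se_1\|\ge 1/2$. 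If you prefer to stay with your reduction, you would instead have to verify directly that $2x^{2H}+2-(x+1)^{2H}-|x-1|^{2H}\le\min(2(x^{2H}+x),8)$ for all $x\ge0$, which again requires a genuine argument (a Taylor/second-derivative estimate for large $x$ and a separate treatment of small $x$), not the route you sketched.
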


\begin{proof}[Proof of Proposition \ref{prop:close_surf_markov}] Denote $\eta(\varphi)\coloneqq\sum_{v\in\Lambda}\eta_{v,\varphi_{v}}$
so that $H^{\eta}(\varphi)=\frac{1}{2}\|\nabla\varphi\|^{2}+\eta(\varphi)$
and more generally $H_{s}^{\eta}(\varphi)=\frac{1}{2}\|\nabla(\varphi-s)\|^{2}+\eta(\varphi)-\eta(s)$.
Hence 
\[H^\eta _s(\varphi)=\frac 12 \|\nabla \varphi\|^ 2 +\frac 12 \|\nabla s\|^ 2+(\varphi,\Delta s)+\eta(\varphi)-\eta(s)= H^\eta(\varphi)+\frac 12 \|\nabla s\|^ 2+(\varphi,\Delta s)-\eta(s).\]
This yields
\begin{align*}
0 & \le H^{\eta}(\varphi_{s}^{\eta})-H^{\eta}(\varphi^{\eta})\\
 & =H_{s}^{\eta}(\varphi_{s}^{\eta})-H_{s}^{\eta}(\varphi^{\eta})+(\varphi_{s}^{\eta}-\varphi^{\eta},-\Delta s\,)\\
 & \le0+D
\end{align*}
where we used in the last inequality that $\varphi^ \eta_s$ is the minimizer of $H_{s}^{\eta}$. Since $\eta^s$ and $\eta$ have the same distribution, it yields that 
\[\E(\varphi^ \eta,-\Delta s)= \E(\varphi^ {\eta^ {-s}},-\Delta s)\]
and
\[\E D= (s,-\Delta s)=\|\nabla s \|^ 2.\]
The result follows by applying Markov's inequality.
\end{proof}

\begin{proof}[Proof of Lemma \ref{lem:noise}]
We have for $v\in\Lambda$ and $t\in\R^n$
\[\Cov(\eta_{v,t}^{\perp},\hat \eta)= \Cov(\eta_{v,t},\hat \eta)-\kappa_{v,t}\var(\hat\eta)=0.\]
Then, $\hat \eta$ and $(\eta_{v,t}^{\perp})_{v\in\Lambda,t\in\R^{n}}$
are independent.
Let $v\in\Lambda'$, we have for $t\in\R^n$
\begin{equation}\label{eq:computation covariance}
\begin{split}
    \Cov(\hat \eta, \eta_{v,t})&=\Cov(\eta_{v,\alpha h e_1},\eta_{v,t})+\Cov(\eta_{v,-\alpha h e_1},\eta_{v,t})\\
    &=
    \frac 12(2\|t\|^{2H}+2(\alpha h )^{2H}-\|t-\alpha h  e_1\|^{2H}-\|t+\alpha h  e_1\|^{2H})\\
    &=
    \frac {(\alpha h)^{2H}}2\left(2\left\|\frac t{\alpha h}\right\|^{2H}+2-\left\|\frac t{\alpha h}-e_1\right\|^{2H}-\left\|\frac t{\alpha h}+ e_1\right\|^{2H}\right)\\
      &\le
   \frac {(\alpha h)^{2H}}2\min\big(2\big(\frac {\|t\|^{2H} }{ (\alpha h)^{2H}}+ \frac {\|t\| }{ \alpha h}\big),8\big)=\min( \|t\|^{2H} +(\alpha h)^{2H-1}\|t\|, 4(\alpha h)^{2H})
    \end{split}
\end{equation}
where we used \eqref{eq:claim 1} of Claim \ref{claim:estimate} in the last inequality. Thanks to \eqref{eq:claim 2}, we deduce that the covariance is non-negative yielding \eqref{eq:lemnoise1}.
We now choose $\alpha$ large enough such that
\[1+\alpha^{2H-1}\le \frac 18 \alpha^{2H}.\]
Inequality \eqref{eq:lemnoise1bis} follows from the previous inequality together with \eqref{eq:computation covariance}.

We have for $\|t\|\ge \beta h$ using \eqref{eq:claim 2}
    \begin{equation}\label{eq:lowerboundcov2}
    \begin{split}
        \Cov(\hat \eta, \eta_{v,t})&=\frac {(\alpha h)^{2H}}2\left(2\left\|\frac t{\alpha h}\right\|^{2H}+2-\left\|\frac t{\alpha h}-e_1\right\|^{2H}-\left\|\frac t{\alpha h}+ e_1\right\|^{2H}\right)\\&\ge (\alpha h)^{2H}g(\|t\|/\alpha h)\ge (\alpha h)^{2H}g(\beta/\alpha)\ge\frac 12\alpha^{2H}h^{2H}
        \end{split}
    \end{equation}
    where in the last inequality we pick some $\beta$ large enough depending on $H$ and $\alpha$ such that $g(\beta/\alpha)\ge 1/2$ (it is easy to check that $\lim _{x\rightarrow\infty} g(x)=1$). Recall that $g:x\mapsto x^H+1-(x+1)^H$ was introduced in the proof of Claim \ref{claim:estimate}.
The result follows.
\end{proof}

\subsection{Proof of Theorem \ref{theorem:83-1}}\label{sec:proof}

We first give a high-level description of the proof. We start with the disorder $\eta$ and consider a coupled disorder $\zeta$ obtained by replacing $\hat\eta_\Lambda$ with an independent copy $\hat\zeta_\Lambda$, while fixing all the orthogonal degrees of freedom. On the event $\{\hat\eta_\Lambda\le -Ch^2\ell ^{d-2}\}$, with probability at least $1/2$, we have that $\hat\zeta_\Lambda- \hat\eta_\Lambda\ge Ch^2\ell^{d-2}$. Next, let $\Pi$ be the set of surfaces whose height is at most $h$ at most vertices. If we consider a shift supported on \(\Lambda\) and equal to \(\beta h e_1\) in the bulk of \(\Lambda\), we can show, based on our choice of \(\beta\) and \(\alpha\) from Lemma \ref{lem:noise}, that the effect of resampling \(\hat\eta_\Lambda\) is greater for surfaces in \((\Pi + s) \cup (\Pi - s)\) compared to those in \(\Pi\). This leads to the conclusion that the observable \(\Delta_1 + \Delta_2\) (from Proposition \ref{prop:Deloc2}) is at least of the order \(\hat\zeta_\Lambda - \hat\eta_\Lambda \ge C h^2\ell^{d-2}\). The observable \(\Delta_3\) is then controlled using the Markov inequality and is of order \(\|\nabla s\|^2 = O(h^2\ell^{d-2})\) (see Propositions \ref{prop:close_surf_markov} and \ref{prop:const_laplace}). Hence, we conclude that on the event \(\{\hat\eta_\Lambda \le -Ch^2\ell^{d-2}\}\), with positive probability, at least one of the surfaces \(\varphi^{\zeta^s}\), \(\varphi^{\zeta^{-s}}\), or \(\varphi^\eta\) is not in \(\Pi\). To upper bound the probability that $\varphi^{\zeta^s}\notin\Pi $ by the probability that $\varphi^\eta\notin \Pi$ requires taking a slightly more involved definition of $\Pi $.

We proceed to describe the proof in detail.

\begin{proof}[Proof of Theorem \ref{theorem:83-1}] Let $\beta\ge \alpha$ be the values from Lemma \ref{lem:noise}.
Consider the noise $\zeta$ that corresponds to the noise $\eta$ where we resample the value of $\hat \eta_\Lambda$ by a new value $\hat \zeta_\Lambda$. Specifically, we have thanks to Lemma \ref{lem:noise}
\begin{equation}\label{eq:etaequality}
\zeta_{v,t}:=\eta_{v,t}+\kappa_{v,t}(\hat\zeta_\Lambda-\hat{\eta}_\Lambda).  
\end{equation}
For short, we write $\hat\eta_\Lambda=\hat \eta$ and $\overline \P$ for the probability conditionally on $\hat{\eta}_{\Lambda}$ and $\mathcal{F}_{\Lambda^{c}}$. 

Thanks to Lemma \ref{lem:noise}, we have
\begin{equation}\label{eq:covbound}
\begin{split}
    \Cov(\eta_{v,t},\hat{\eta})\ge\frac 12 (\alpha h)^{2H} \,\text{ for }\|t\|\ge \beta h\\
\Cov(\eta_{v,t},\hat{\eta})\le\frac 18 (\alpha h)^{2H} \,\text{ for }\|t\|\le  h\\
0\le \Cov(\eta_{v,t},\hat{\eta})\le4 (\alpha h)^{2H} \,\text{ for }t\in\R^n.
\end{split}
\end{equation}
Define 
\begin{equation*}
    \Pi  \coloneqq\Big\{\varphi\in\Omega^{\Lambda_{L}}:\sum_{v\in\Lambda}\Cov(\eta_{v,\varphi_v},\hat\eta )\le \frac 1 {6}|\Lambda|  (\alpha h)^{2H}\Big\}.
\end{equation*}
By \eqref{eq:covbound}, we have
\begin{equation}\label{eq:boundphih}
  \forall\varphi\in\Pi,\quad  \big|\big\{ v \in \Lambda  : \| \varphi _v \| \ge \beta  h \big\}\big|\le \frac 13 |\Lambda|.
\end{equation}
Let $\varphi \notin\Pi $ and set \[D_\Lambda(\varphi)\coloneqq |\{v\in\Lambda:\|\varphi_v\|>h\} |.\]  Inequality \eqref{eq:covbound} implies that
\begin{equation*}
  \frac 1 {6}|\Lambda|  (\alpha h)^{2H}<  \sum_{v\in\Lambda}\Cov(\eta_{v,\varphi_v},\hat\eta )\le \frac 18( |\Lambda|-D_\Lambda(\varphi) ) (\alpha h)^{2H}+ 4(\alpha h)^{2H}D_\Lambda(\varphi)
\end{equation*}
where the lower bound follows from the fact that $\varphi\notin\Pi$.
Yielding 
\[D_\Lambda(\varphi)\ge \frac {1}{93}|\Lambda|.\]
Hence, the result will follow by proving that $\overline \P(\varphi ^\eta\notin\Pi)\ge c$ on the event $\{\hat{\eta}_{\Lambda}\le - Ch^2\ell ^{d-2}\}$.

Let $\Delta_1,\Delta_2,\Delta_3$ be as in Proposition \ref{prop:Deloc2}.
We start by bounding $\Delta_{1}$. Using \eqref{eq:etaequality}, on the event $\{ \hat \zeta\ge \hat \eta\}$ we have 
\begin{equation}\label{eq:bounddelta1}
\begin{split}
\Delta_{1} =\GE_{\Pi}^{\eta}-\GE_{\Pi}^{\zeta}\ge\inf_{\varphi\in\Pi}H^{\eta}(\varphi)-H^{\zeta}(\varphi)&=\inf_{\varphi\in\Pi}\sum_{v\in\Lambda} \eta_{v,\varphi_v}-\zeta_{v,\varphi_v}  =\inf_{\varphi\in\Pi}\sum_{v\in\Lambda}\kappa_{v,\varphi_v }(\hat \eta-\hat \zeta)\\&\ge -\frac{1}{6\var(\hat \zeta)} (\alpha h)^{2H}|\Lambda|(\hat{\zeta}-\hat\eta ).\end{split}
\end{equation}
Let $\pi$ be the function from Proposition \ref{prop:const_laplace} applied to the box $\Lambda$ and $\ep=1/10$. Recall that $\Lambda^-$ was defined in the statement of Proposition \ref{prop:const_laplace}.
Define the surface $s\coloneqq 2\beta \pi e_1$ where $\beta$ was defined in Lemma \ref{lem:noise}. In particular, we have by inequality \eqref{eq:covbound}
\begin{equation}\label{eq:boundsumkappa}
    \forall v\in\Lambda^-\quad\forall \|t\|\le \beta h \qquad \min(\Cov(\eta_{v,t+s_v},\hat\eta ),\Cov(\eta_{v,t-s_v},\hat\eta ))\ge \frac 12 (\alpha h)^{2H}
\end{equation}
Let $\varphi\in(\Pi+s)\cup(\Pi-s)$. Thanks to \eqref{eq:boundphih} there is at least $\frac 23|\Lambda|$ vertices such that $\|\varphi_v\|\le \beta h$. Since $|\Lambda\setminus\Lambda^-|\le \ep|\Lambda|$, thanks to \eqref{eq:boundsumkappa}, there is at least $(\frac 23-\ep)|\Lambda|=\frac {19}{30}|\Lambda|$ vertices in $\Lambda^-$ such that $\Cov(\eta_{v,\varphi_v},\hat \zeta)\ge \frac 12 (\alpha h)^{2H}$.
It yields as in \eqref{eq:bounddelta1} that on the event $\{\hat \zeta\ge \hat\eta\}$,
\begin{equation}\label{eq:bounddelta2}
\begin{split}
\Delta_{2}  =\inf_{\varphi\in (\Pi+s)\cup(\Pi-s)}H^\zeta(\varphi)-H^\eta(\varphi)
&\ge \frac{19}{60\var(\hat \zeta)} |\Lambda|(\alpha h)^{2H}(\hat{\zeta}-\hat\eta).
\end{split}
\end{equation}
Moreover, we have
\begin{equation}\label{eq:xi positive}
 \overline \P(\hat{\zeta}\ge0)\ge\frac 12.
\end{equation}
Thus combining  \eqref{eq:bounddelta1}, \eqref{eq:bounddelta2} and \eqref{eq:xi positive} yields that on the event $\{\hat{\eta}_{\Lambda}\le - Ch^2\ell ^{d-2}\}$
\[
\overline \P\bigg( \Delta_{1}+\Delta_{2}\ge\frac {3}{20\var(\hat \zeta)} |\Lambda|(\alpha h)^{2H}(\hat{\zeta}-\hat \eta )\bigg) \ge\frac{1}{2}.
\]

By Corollary \ref{cor:markov} for $a=6$, using that $\zeta ^s$ and  $\zeta ^{-s}$ have the same distribution under the law $\overline \P$, we get
\[
\overline \P(\,\Delta_{3}\ge-2\|\nabla s\|^{2}\,)\ge\frac{2}{3}
\]
If the following holds
\begin{equation}\label{eq:to beverify}
    \frac {3}{20\var(\hat \zeta)} |\Lambda|(\alpha h)^{2H}Ch^2\ell ^{d-2}\ge3\|\nabla s\|^{2}
\end{equation}
then by Proposition
\ref{prop:Deloc2}, it holds that on the event $\{\hat{\eta}_{\Lambda}\le - Ch^2\ell ^{d-2}\}$
\begin{equation}\label{eq:deloc4dl1}
\begin{split}
     \overline \P(\varphi^\eta\notin\Pi)+\overline \P(\varphi^{\zeta^{s}}\notin\Pi)+\overline \P(\varphi^{\zeta^{-s}}\notin\Pi)&\ge \overline \P(\Delta _1+\Delta _2+\Delta _3\ge \|\nabla s\|^{2})\\
     &\ge \overline \P( \Delta_{1}+\Delta_{2}\ge 3\|\nabla s\|^{2},\Delta_{3}\ge-2\|\nabla s\|^{2})\\ 
    & \ge \frac{1}{2}+\frac{2}{3}-1=\frac{1}{6}. 
\end{split}
\end{equation}
We will fix the constants at the end such that \eqref{eq:to beverify} holds.

We claim that 
\begin{equation}\label{eq:claim monotony}
 \{ \varphi^\zeta\notin\Pi, \ \hat\eta \le \hat \zeta \}\subset \{\varphi^\eta\notin\Pi\}.
\end{equation}
Assume $\hat \zeta\ge \hat\eta $ and $\varphi^\zeta\notin \Pi$.
We have
\begin{equation*}
    \begin{split}
        \GE^\zeta_\Pi-\GE^\eta_\Pi&\le H^{\zeta}(\varphi^{\eta,\Pi})- H^{\eta}(\varphi^{\eta,\Pi})\le\sup_{\varphi\in \Pi} H^{\zeta}(\varphi)- H^{\eta}(\varphi) \\&=\sup_{\varphi\in \Pi}\frac{1}{\var(\hat \zeta)}\sum_{v\in\Lambda}\Cov(\eta_{v,\varphi_v},\hat\eta)(\hat\zeta -\hat\eta )\le \frac{1}{6\var(\hat \zeta)} (\alpha h)^{2H}|\Lambda|(\hat\zeta -\hat\eta )
    \end{split}
\end{equation*}
and similarly
\begin{equation*}
    \begin{split}
        \GE^\eta-\GE^\zeta&\le H^\eta(\varphi^\zeta)-H^\zeta (\varphi^\zeta)\le \sup_{\varphi\notin\Pi} H^\eta(\varphi)-H^\zeta (\varphi)\\
        &\le\frac{1}{\var(\hat \zeta)}(\hat\eta -\hat\zeta)\inf_{\varphi\notin\Pi}\sum_{v\in\Lambda}\Cov(\eta_{v,\varphi_v},\hat\eta)< -\frac{1}{6\var(\hat \zeta)} (\alpha h)^{2H}|\Lambda|(\hat\zeta -\hat\eta )
    \end{split}
\end{equation*}
where we used that $\varphi^\zeta\notin \Pi $. By summing the two inequalities and using again that $\varphi^\zeta\notin \Pi $, we get 
\[\GE^\eta-\GE^\eta_\Pi\le \GE^\zeta-\GE^\zeta_\Pi<0.\]
It yields that $\varphi ^\eta\notin\Pi$ and \eqref{eq:claim monotony} follows.

Using that $\zeta^{-s}, \zeta$ and $\zeta^{s}$ have the same distribution under the law $\overline \P$, we get
\[\overline \P(\varphi^{\zeta^{s}}\notin\Pi)+\overline \P(\varphi^{\zeta^{-s}}\notin\Pi)\le 2\overline \P(\varphi^{\zeta}\notin\Pi)\le 2\overline \P(\varphi^{\eta}\notin\Pi)+2\overline \P( \hat \zeta<\hat \eta )\]
where the last inequality follows from \eqref{eq:claim monotony}.
Combining the previous inequality together with inequality \eqref{eq:deloc4dl1} gives
\[2\overline \P( \hat \zeta<\hat \eta )+3\overline \P(\varphi^\eta\notin\Pi)\ge \frac{1}{6}.\]
We will further choose $C$ large enough such that 
\begin{equation}\label{eq:hatzeta>a}
    \overline \P( \hat \zeta< - Ch^2\ell ^{d-2})\le \frac 1 {24}.
\end{equation}
It yields that on the event $\{\hat{\eta}_{\Lambda}\le - Ch^2\ell ^{d-2}\}$
\[\P(\varphi^\eta\notin\Pi \mid \hat \eta_\Lambda, \mathcal F_{\Lambda^c})\ge \frac{1}{36}.\]
To conclude we have to check that inequalities \eqref{eq:to beverify} and \eqref{eq:hatzeta>a} hold.
Note that $\var(\hat \zeta)=C_0\ell^d h^ {2H}$,
$\frac {1}{4\var(\hat \zeta)} |\Lambda|(\alpha h)^{2H}C\ell ^{d-2}h^2= \Omega(C\ell ^{d-2}h^2)$ and
$\|\nabla s\|^{2}=O (h^2\ell^{d-2})$ so the result follows by choosing $C$ large enough depending only on $H$ and $d$. To conclude that \eqref{eq:hatzeta>a} holds note that
\[ \frac { C\ell ^{d-2}h^{2}} {C_0\ell ^{d/2}h^{H}}=\frac{C}{C_0}h^{2-H}\ge \frac{C}{C_0}.\]
Hence \eqref{eq:hatzeta>a} holds for $C$ large enough.
\end{proof}

\subsection{Lower bounds on height fluctuations}\label{sec:dneq4}

The lower bounds in dimensions $d\ne 4$ follow directly from Theorem \ref{theorem:83-1}. The case of dimension $d=4$ is more complicated, and is proved with a fractal percolation construction.

\begin{proof}[Proof of~\eqref{eq:constant fraction delocalization} and the lower bound in \eqref{eq:123 height fluctuations thm} (Theorem \ref{thm:main 123})]
In dimensions $d\le 3$, apply Theorem \ref{theorem:83-1} to $\Lambda=\Lambda_L$ and $h=tL^{\frac{4-d}{4-2H}}$.
We have
\[\var(\hat\eta)=  |\Lambda_{L}| (4-2^{2H})(\alpha h )^{2H} \]
and thanks to our choice of $h$,
\begin{equation}\label{eq:choice of h}
    \frac{h^2L^{d-2}}{\sqrt{\var(\hat \eta)}}=O(h^{2-H}L^{\frac d 2 -2})=O(t^{2-H}) .
\end{equation}
Set 
\[\Pi\coloneqq\bigg\{\varphi\in\Omega^{\Lambda}:|\big\{ v \in \Lambda  : \| \varphi _v^\eta  \| >  h \big\}\big|\ge \frac c2|\Lambda|\bigg\} \]
where $c$ is the constant from Theorem \ref{theorem:83-1}.
Thanks to Theorem \ref{theorem:83-1}, we have that conditionally on $\{\hat \eta\le -Ch^2L^{d-2}\}$
\begin{equation}\label{eq:lowerbound proba Pi}
|\Lambda|\P(\varphi^{\eta}\in\Pi\mid\hat\eta)+\frac c2|\Lambda|\ge \mathbb{E}\big[\big|\big\{ v \in \Lambda  : \| \varphi _v^{\eta}  \| >  h \big\}\big|\mid\hat{\eta}\big]\ge c|\Lambda|.
\end{equation}
Yielding
\begin{equation}\label{eq:density delocalization}
    \mathbb P \left(\frac{1}{|\Lambda_L|} \big| \big\{v\in\Lambda_L:\|\varphi_v^{\eta,\Lambda_L}\|\ge tL^{\frac{4-d}{4-2H}} \big\} \big| \ge \frac c2\right)=\P(\varphi^\eta\in\Pi)\ge \frac c2 \P(\hat \eta\le -Ch^2L^{d-2})\ge \frac c2 e^{-ct^{2-H}}
\end{equation}
where we used \eqref{eq:choice of h} in the last inequality. This proves~\eqref{eq:constant fraction delocalization}, which implies the lower bound in~\eqref{eq:123 height fluctuations thm}.
\end{proof}

\begin{proof}[Proof of the lower bound in \eqref{eq:loweboundheight d>4} (Theorem \ref{thm:main d>4})]
The result follows readily by applying Theorem \ref{theorem:83-1} to $\Lambda=\{v\}$ and $h=t$ for every $v\in \Lambda$.
\end{proof}

We turn to prove a lower bound for the typical heights in dimension $d=4$. The proof is a simple application of fractal (Mandelbrot) percolation using Theorem \ref{theorem:83-1} (related arguments were used in dimension $d=4$ in~\cite{dario2024quantitative,dembin2024minimal}).

\begin{proof}[Proof of \eqref{eq:fluctuation height d=4} (Theorem~\ref{thm: main 4})] 
Suppose that $L$ is sufficiently large. We would like to apply Theorem~\ref{theorem:83-1} with $h:=c_0  (\log \log L)^{\frac{1}{4-2H}}$ for a sufficiently small constant $c_0>0$ to be determined later. 
For a box $\Lambda \subseteq \Lambda _L$, let $ D_{\Lambda }$ be the number of vertices $v\in \Lambda $ on which the surface delocalizes to height $ h$. That is, 
\begin{equation}
     D _{\Lambda  } := \big|\big\{ v \in \Lambda  : \| \varphi _v^\eta \| >  h \big\}\big| .
\end{equation}
 We define $\hat \eta _\Lambda $ to be 
\begin{equation}
    \hat \eta_\Lambda :=\sum _{v\in \Lambda} \eta_{v,\alpha h e_1}+\eta_{v,-\alpha h e_1}.
\end{equation}
We say that the block $\Lambda=v+ \Lambda _\ell  \subseteq \Lambda _L$ is \emph{good} if $\hat \eta _\Lambda \le -a(\ell) $ with $a(\ell) =C\ell ^2 h^{2}$ as in Theorem \ref{theorem:83-1}. Note that $\hat\eta _\Lambda$ is normally distributed with zero expectation and $\var(\hat\eta _\Lambda)\ge c\ell ^4h^{2H} \ge 10a(\ell )^2/\log \log L$, as long as $c_0$ is sufficiently small depending on $C$, and therefore $\mathbb P (\Lambda \text{ is good}) \ge (\log L)^{-1/3}$ for large $L$.

Our goal will be to construct a set $\mathcal Q$ of disjoint good blocks that cover a positive fraction of $\Lambda _L$ with high probability. To this end, we consider a sequence of hierarchical partitions. Let $j_1:=\lfloor \tfrac{1}{2} \sqrt{\log L} \rfloor $ and for any integer $j\in[0,j_1]$ let $m_j:=\lfloor \log _3L \rfloor -j\lfloor \sqrt{\log L}\rfloor  $ and $R_j:=3^{m_j}$. Define $\Lambda ^0:=[0,R_0)^d \subseteq \Lambda _L$ and let
\begin{equation}
\mathcal T _j:= \Big\{  \Lambda =\big( z+[0,R_j)^d \big) \cap \mathbb Z ^d : z\in R_j \mathbb Z ^d \text{ and } \Lambda \subseteq \Lambda ^0 \Big\}.
\end{equation} 
Note that $\mathcal T _0=\{\Lambda ^0\}$ and that $\mathcal T _{j+1}$ is a refinement of the partition $\mathcal T _j$ such that any block $\Lambda \in \mathcal T _j$ is the union of $3^{d\lfloor \sqrt{\log L} \rfloor }$ blocks in $\mathcal T _{j+1}$. Moreover, by construction, the side length of each block in $\mathcal T _j$ is odd and therefore it is of the form $v+\Lambda _{\ell _j} $ with $\ell _j:=(R_j-1)/2$.

We define $\mathcal Q$ in the following way. A block $\Lambda \in \mathcal T _j$ belongs to $\mathcal Q$ if and only if $\Lambda $ is good and for all $j'<j$ the unique block $\Lambda ' \in \mathcal T _{j'}$ with $\Lambda \subseteq \Lambda '$ is not good. We say that a vertex $v\in \Lambda _L$ is covered if $v\in \Lambda $ for some $\Lambda \in \mathcal Q $. We claim that for all $v\in \Lambda ^0$
\begin{equation}\label{eq:bound for v}
    \mathbb P (v\text{ is covered}) \ge 1/2.
\end{equation}
Indeed, letting $\Lambda ^j=\Lambda ^j(v) $ be the unique block in $\mathcal T _j$ containing $v$ we have
\begin{equation}\label{eq:44}
    \Big\{ \!\! \begin{array}{c} v \text{ is not} \\ \text{covered}\end{array} \!\! \Big\} \subseteq \bigcap _{j=0}^{j_1-1} \!\big\{ \hat \eta _{\Lambda ^j} \ge-  a(\ell _j) \big\} \subseteq \bigcup _{j=0}^{j_1-1} \! \big\{\hat \eta _{\Lambda ^{j+1}} \ge  a(\ell _j) \big\} \cup \bigcap _{j=0}^{j_1-1} \!\big\{ \hat \eta _{\Lambda ^j}-\hat \eta _{\Lambda ^{j+1}} \ge - 2a(\ell _j) \big\}.
\end{equation}
The random variable $\hat \eta  _{\Lambda ^{j+1}}$ is normal with $\var (\hat \eta  _{\Lambda ^{j+1}})\le C\ell _{j+1}^{4}h^{2H} \le a(\ell _j)^2\exp (-\sqrt{\log L})$ and therefore $\mathbb P(\hat \eta  _{\Lambda ^{j+1}} \ge a(\ell _j)) \ll 1/L $. Moreover, the variables $\{\hat \eta  _{\Lambda ^j}-\hat \eta  _{\Lambda ^{j+1}}\}_{j=0}^{j_1-1}$ are independent and normal with $\var (\hat \eta  _{\Lambda ^j}-\hat \eta  _{\Lambda ^{j+1}})\ge c\ell _j^{4}h^{2H}\ge 10a(\ell _j)^2/\log \log L$, as long as $c_0$ is sufficiently small, and therefore the probability of the intersection on the right hand side of \eqref{eq:44} is at most $(1-(\log L)^{-1/3})^{j_1} \le 1/4$. Substituting these bounds in \eqref{eq:44} completes the proof of \eqref{eq:bound for v}.

Next, note that the for all $j'\le j$ and any $\Lambda \subseteq \Lambda '$ with $\Lambda \in \mathcal T_j$ and $\Lambda '\in \mathcal T _{j'}$, the random variable $\hat \eta _{\Lambda '}$ is measurable in $ \sigma (\hat \eta _\Lambda , \mathcal F _{\Lambda ^c} )$. Thus, the event $\{\Lambda \in \mathcal Q \}$ is measurable in $ \sigma (\hat \eta _\Lambda , \mathcal F _{\Lambda ^c} )$ and therefore by Theorem \ref{theorem:83-1} we have 
\begin{equation}
    \mathbb E \big[ 
D_{\Lambda }\mathds 1 \{\Lambda \in \mathcal Q \} \big] =\mathbb E \big[ \mathds 1\{\Lambda \in \mathcal Q \} \cdot \mathbb E [D_\Lambda \mid \hat \eta _\Lambda , \mathcal F _{\Lambda ^c} ]  \big]\ge c|\Lambda | \cdot  \mathbb P (\Lambda \in \mathcal Q ).
\end{equation}
Hence, we obtain
    \begin{equation}
    \begin{split}
        \mathbb E [D_{\Lambda _L}] \ge \mathbb E \Big[ \sum _{\Lambda \in \mathcal Q } D_\Lambda \Big]\ge \sum _{j=0}^{j_1} \sum _{\Lambda \in \mathcal T _j} \mathbb E \big[ 
D_{\Lambda }\mathds 1 \{\Lambda \in \mathcal Q \} \big]\ge c \sum _{j=0}^{j_1} \sum _{\Lambda \in \mathcal T _j} |\Lambda |\cdot \mathbb P (\Lambda \in \mathcal Q \big)\\
=c \cdot \mathbb E \Big[ \sum _{\Lambda \in \mathcal Q } |\Lambda | \Big] =c \sum _{v\in \Lambda ^0 } \mathbb P \big( v \text{ is covered} \big) \ge \frac{c}{2} |\Lambda ^0| \ge c|\Lambda _L|,
\end{split}
\end{equation}
where in the second to last inequality we used \eqref{eq:bound for v}. This finishes the proof.
\end{proof}

\subsection{Ground energy anti-concentration}\label{sec:G} In this section we prove the lower bounds on the ground energy fluctuations stated in our main results; specifically, the lower bounds in~\eqref{eq:bound height 123},~\eqref{eq:lowerbound26} (low dimensions $d<4$),~\eqref{eq:27} (critical dimension $d=4$) and~\eqref{eq:GE d>4} (high dimensions $d>4$).

The lower bounds in~\eqref{eq:27} and~\eqref{eq:GE d>4} are deduced from the following general result, which derives a lower bound on the ground energy fluctuations from a lower bound on the typical heights. This inequality may be thought of as the direction
\begin{equation}
    \chi \ge H\xi + \frac{d}{2}
\end{equation}
of the scaling relation~\eqref{eq:second scaling relation}. In its proof, starting from the assumption that a positive fraction of the heights exceed $h$ with uniformly positive probability, fluctuations in the ground energy are shown to follow from fluctuations of the disorder at the heights $he_1$ and $-he_1$.

\begin{theorem}\label{thm:GEAC} Let $H\in(0,1)$ and integer $d,n\ge 1$.
For each $\delta>0$ there exists $c_\delta>0$, depending only on $\delta,H, d$, such that the following holds. For each finite $\Lambda^0\subset\Lambda^1\subset\Z^d$, if $h>0$ satisfies 
\begin{equation}\label{eq:assumption expected fraction of delocalizatio}
    \frac{\mathbb E \big[ | \{v\in \Lambda^0 : \|\varphi^{\eta,\Lambda^1}_v\| \ge h \} | \big]}{|\Lambda^0|} \ge \delta
\end{equation}
then
\begin{equation}
    \P\left(\left|\GE^{\eta,\Lambda^1}-\med(\GE^{\eta,\Lambda^1})\right|\ge c_\delta |\Lambda^0|^{1/2}h^H\right)\ge c_\delta.
\end{equation}
\end{theorem}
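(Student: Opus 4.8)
The plan is to exploit the main identity (Proposition~\ref{prop:main identity}) together with the fact that shifting the surface by $he_1$ or $-he_1$ changes the ground energy by an amount governed by the disorder values $\eta_{v,\pm he_1}$ at the delocalized vertices. First I would fix the disorder $\eta$ and consider the competitor surface $\varphi^{\eta,\Lambda^1} + h e_1$ (and likewise $\varphi^{\eta,\Lambda^1} - h e_1$) as a candidate minimizer in $\Omega^{\Lambda^1}$; actually the cleaner route is to compare $\GE^{\eta,\Lambda^1}$ with $\GE^{\eta^{s},\Lambda^1}$ for the constant shift $s \equiv \pm h e_1$ on a slightly shrunk box, using the main identity. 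Since a constant shift has $\|\nabla s\|^2 = O(h^2 |\partial|)$ which is a lower-order boundary term, the dominant contribution to $H^{\eta^{s},\Lambda^1}(\varphi+s)-H^{\eta,\Lambda^1}(\varphi)$ is $-\sum_{v}\eta_{v,-s_v}$, i.e. $\mp\sum_v \eta_{v, \mp h e_1}$. The point is that $\eta^s \eqd \eta$, so $\GE^{\eta^s,\Lambda^1} \eqd \GE^{\eta,\Lambda^1}$, and we are comparing two copies of the ground energy whose difference is essentially a Gaussian sum over $\Lambda^0$ of variance $\asymp |\Lambda^0| h^{2H}$ — \emph{provided} the minimizer genuinely sits at height $\ge h$ on a positive fraction of $\Lambda^0$, so that we are reading the disorder at the relevant scale rather than near the origin (where the fBm is small).

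The key steps, in order: (1) Introduce $\hat\eta := \sum_{v\in\Lambda^0}(\eta_{v,he_1} - \eta_{v,-he_1})$ or a similar symmetrized linear functional of the disorder of Gaussian variance $\asymp |\Lambda^0| h^{2H}$; this is the ``resampleable'' coordinate. (2) Condition on everything orthogonal to $\hat\eta$ (as in Lemma~\ref{lem:noise}), and observe that resampling $\hat\eta$ to an independent copy $\hat\zeta$ changes the disorder only through the covariance kernel $\kappa_{v,t}$, which by the computation in Lemma~\ref{lem:noise} / Claim~\ref{claim:estimate} is of order $h^{2H}$ (up to constants) precisely when $\|t\| \gtrsim h$ and is \emph{small} when $\|t\| \le h$. (3) Use assumption~\eqref{eq:assumption expected fraction of delocalizatio}: with uniformly positive probability, a positive fraction of vertices of $\Lambda^0$ have $\|\varphi^{\eta,\Lambda^1}_v\| \ge h$; on that event the ground energy responds to the resampling of $\hat\eta$ by at least $c|\Lambda^0| h^{2H}/\sqrt{\operatorname{var}(\hat\eta)} \asymp \sqrt{|\Lambda^0|}\,h^H$ times the size of $\hat\zeta - \hat\eta$. (4) Since $\hat\zeta - \hat\eta$ is a centered Gaussian of standard deviation $\asymp \sqrt{|\Lambda^0|}\,h^H$, with probability bounded below it exceeds its standard deviation in the ``right'' direction; combining with step (3), two conditionally-i.i.d.\ copies of $\GE$ differ by $\ge c_\delta\sqrt{|\Lambda^0|}\,h^H$ with probability $\ge c_\delta'$, which forces $\GE^{\eta,\Lambda^1}$ itself to fluctuate around its median at that scale.

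The main obstacle I anticipate is step (3): one must argue that when a positive \emph{fraction} of heights exceed $h$ — but we do not control \emph{which} vertices, and the event is only of positive (not high) probability — the change in ground energy from resampling $\hat\eta$ is still $\gtrsim |\Lambda^0| h^{2H}$ in absolute value rather than being washed out by cancellations. The resolution is the monotone/sign structure: the covariance $\operatorname{Cov}(\eta_{v,t},\hat\eta)$ has a definite sign for $t$ near $\pm he_1$ and the competitor-surface argument (as in the proof of Theorem~\ref{theorem:83-1}, inequalities~\eqref{eq:bounddelta1}--\eqref{eq:bounddelta2}) shows $\GE^{\eta,\Lambda^1} - \GE^{\zeta,\Lambda^1}$ is comparable to $(\hat\zeta - \hat\eta)$ times $\frac{1}{\operatorname{var}(\hat\eta)}\sum_{v}\operatorname{Cov}(\eta_{v,\varphi_v},\hat\eta)$, and this last sum is $\gtrsim |\Lambda^0| h^{2H}$ exactly on the delocalization event. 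One also needs a short lemma promoting ``two conditionally-i.i.d.\ copies differ by $\ge a$ with probability $\ge p$'' to ``$|\GE - \med(\GE)| \ge a/2$ with probability $\ge p/C$'', which is a standard anti-concentration fact. A minor additional point is handling the boundary/gradient cost $\|\nabla s\|^2$ of the shift, which is controlled exactly as in the proof of Theorem~\ref{theorem:83-1} by shrinking the box by a constant factor and invoking Proposition~\ref{prop:const_laplace}.
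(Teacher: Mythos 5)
Your plan follows essentially the same route as the paper's proof: decompose the disorder along the linear functional $\hat\eta$, condition on the orthogonal part, resample $\hat\eta$ to produce two coupled copies of the ground energy, and use the sign and size of $\Cov(\eta_{v,t},\hat\eta)$ on the delocalization event (whose probability is bounded below via the assumption and a reverse Markov inequality) to force the two copies apart by $c_\delta|\Lambda^0|^{1/2}h^H$. One correction: the functional must be the symmetrized \emph{sum} $\sum_{v\in\Lambda^0}(\eta_{v,he_1}+\eta_{v,-he_1})$, as in Lemma~\ref{lem:noise}, not the difference $\sum_v(\eta_{v,he_1}-\eta_{v,-he_1})$ that you write first; the difference has $\Cov(\eta_{v,t},\hat\eta)=\tfrac12(\|t+he_1\|^{2H}-\|t-he_1\|^{2H})$, which vanishes for $t\perp e_1$ and changes sign with $t_1$, so the crucial lower bound $\sum_v\kappa_{v,\varphi_v}\gtrsim\delta$ on the event $\varphi\in\Pi$ would fail (the surface may delocalize in any direction, and contributions could cancel). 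Also, no shift function is needed for this theorem — the paper simply evaluates the \emph{same} competitor surface $\varphi^{\eta^1}$ under both disorders $\eta^0,\eta^1$, so the $\|\nabla s\|^2$ and Proposition~\ref{prop:const_laplace} machinery you invoke at the end is superfluous here (it belongs to the proof of Theorem~\ref{theorem:83-1}).
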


The lower bound~\eqref{eq:27} (respectively~\eqref{eq:GE d>4}) for dimensions $d=4$ (respectively $d>4$) directly follows from Theorem~\ref{thm:GEAC} with the choice of parameters $\Lambda^0=\Lambda^1=\Lambda_L$ and   $h=c(\log\log L)^{\frac 1{4-2H}}$) (respectively $h=1$) using the lower bound in~\eqref{eq:fluctuation height d=4} (respectively  \eqref{eq:loweboundheight d>4}).

With the same approach in dimensions $d<4$, the lower bound in~\eqref{eq:bound height 123} follows from the lower bound in~\eqref{eq:constant fraction delocalization}. However, we use a different approach to prove the lower bound in~\eqref{eq:lowerbound26} (from which the lower bound in~\eqref{eq:bound height 123} also follows), deriving it in the following argument using Theorem \ref{theorem:83-1}.

\begin{proof}[Proof of the lower bound in \eqref{eq:lowerbound26} (Theorem \ref{thm:main 123})]
Let $d\in\{1,2,3\}$.
Let $L\ge 3$, $h\ge 1$. Set $\Lambda=\Lambda_L$. Let $\alpha$ from  Lemma \ref{lem:noise}. Set 
\begin{equation}
    \hat \eta_\Lambda :=\sum _{v\in \Lambda} \eta_{v,\alpha h e_1}+\eta_{v,-\alpha h e_1}.
\end{equation}
We will now use the noise decomposition as the one introduced in Lemma \ref{lem:noise}.
For all $v\in\Lambda$ and $t\in\R^{n}$ denote $\kappa_{v,t}\coloneqq\frac{\cov(\eta_{v,t},\hat \eta)}{\var(\hat \eta)}$
and define 
\[
\eta_{v,t}^{\perp}:=\eta_{v,t}-\kappa_{v,t}\hat \eta.
\]

Let $\hat \eta^1$ and $\hat\eta^0$ be two independent random variables distributed as $\hat \eta$ and let $\eta^0$ and $\eta^1$ be the corresponding disorders, that is 
\begin{equation}\label{eq:noise12}
    \forall v,t\quad \forall i\in\{0,1\}\quad \eta_{v,t}^i=\eta_{v,t}^\perp +\kappa_{v,t}\hat \eta^i.
\end{equation}
We will work conditionally on the event $$\mathcal E\coloneqq\{\hat \eta^0\le   \hat\eta^1 - Ch^2L ^{d-2}\}\cap \{\hat{\eta}^{1}\le - Ch^2L ^{d-2}\}.$$
Set 
\[\Pi\coloneqq\bigg\{\varphi\in\Omega^{\Lambda}:|\big\{ v \in \Lambda  : \| \varphi _v^\eta  \| >  h \big\}\big|\ge \frac c2|\Lambda|\bigg\} \]
where $c$ is the constant from Theorem \ref{theorem:83-1}.
Thanks to Theorem \ref{theorem:83-1}, we have by similar computations as in \eqref{eq:lowerbound proba Pi} that conditionally on $\cE$
\[\P(\varphi^{\eta^1}\in\Pi\mid\hat\eta^1)\ge \frac c2.\]
Let $\varphi\in\Pi$.  We have thanks to \eqref{eq:noise12}
\begin{equation*}
\sum_{v\in\Lambda}\eta^0_{v,\varphi_v}=\sum_{v\in\Lambda}\kappa_{v,t}  (\hat \eta^0- \hat\eta^1)+ \sum_{v\in\Lambda}\eta^1_{v,\varphi_v}.
\end{equation*}
Moreover, using that $\varphi\in\Pi$,
\begin{equation}
  \sum_{v\in\Lambda}\kappa_{v,t}   \ge   \sum_{v\in\Lambda :\|\varphi_v\|\ge h}\frac {(\alpha h)^{2H}g(1/\alpha)}{\var(\hat \eta)}\ge c_H\frac c2 
\end{equation}
where $c_H$ only depends on $H$ (we use inequality \eqref{eq:lowerboundcov2} in the proof of Lemma \ref{lem:noise} for the last inequality) recall that $\var(\hat \eta)=O((\alpha h)^{2H}|\Lambda|)$.
Recall that $\hat\eta^0\le \hat\eta^1$ on $\cE$.
Conditionally on $\cE$,
combining the two previous inequalities, we obtain
\begin{equation*}
\sum_{v\in\Lambda}\eta^0_{v,\varphi_v}\le c_H\frac c2  (\hat \eta^0- \hat\eta^1)+ \sum_{v\in\Lambda}\eta^1_{v,\varphi_v}.
\end{equation*}
Taking the infimum for $\varphi\in\Pi$, we obtain 
\[\GE^{\eta^0,\Lambda}\le \inf_{\varphi\in\Pi}H^{\eta^0,\Lambda}(\varphi)\le c_H\frac c 2  (\hat \eta^0- \hat\eta^1)+\inf_{\varphi\in\Pi}H^{\eta^1,\Lambda}(\varphi). \]
Furthermore, conditionally on the event $\cE$, there is a probability at least $\frac c2 $ that $\varphi^{\eta^1}\in\Pi$, which imply that $\inf_{\varphi\in\Pi}H^{\eta^1,\Lambda}(\varphi)=\GE^{\eta^1,\Lambda}$. Hence,
\[\P(\GE^{\eta^0,\Lambda}-\GE^{\eta^1,\Lambda}\le -ch^2L ^{d-2})\ge \frac c 2 \P(\cE)\ge \frac c4e^{-ch^{4-2H}L^{4-d}} \]
where we used in the last inequality that $\var(\hat\eta)=O(h^2L^d)$.
Since $(\eta^0,\eta^1)$ is a coupling of the two disorders, both having the same marginal distribution as $\eta$, the conclusion follows by taking $h=\sqrt tL^{\frac{4-d}{4-2H}}$.    
\end{proof}
\begin{proof}[Proof of Theorem~\ref{thm:GEAC}]
Let $\Lambda^0\subset \Lambda^1\subset \Z^d$.
Let $h\ge 1$ and $\delta>0$ such that
\begin{equation*}
     \frac{\mathbb E \big[ | \{v\in \Lambda^0 : \|\varphi^{\eta,\Lambda^1}_v\| \ge h \} | \big]}{|\Lambda^0|} \ge \delta.
\end{equation*}
Set 
\begin{equation*}
    \Pi\coloneqq\left\{\varphi\in\Omega^{\Lambda^1}:\frac{|\{v\in\Lambda^0:\|\varphi_{v}\|\ge h\}|}{|\Lambda^0|}\ge \frac \delta 2\right\} .
\end{equation*}
In particular, by similar computations as in \eqref{eq:lowerbound proba Pi}, we get
\begin{equation}\label{eq: lower bound for phi in Pi}
    \P(\varphi ^{\eta,\Lambda^1}\in \Pi)\ge \frac \delta 2.
\end{equation}
We will now use a similar noise decomposition as the one introduced in Lemma \ref{lem:noise}.
Set 
\[
\hat\eta:=\sum_{v\in\Lambda_0}\eta_{v, h e_1}+\eta_{v,- h e_1}.
\]
For all $v\in\Lambda_1$ and $t\in\R^{n}$ denote $\kappa_{v,t}\coloneqq\frac{\cov(\eta_{v,t},\hat \eta)}{\var(\hat \eta)}$
and define 
\[
\eta_{v,t}^{\perp}:=\eta_{v,t}-\kappa_{v,t}\hat \eta.
\]
By Markov's inequality and inequality \eqref{eq: lower bound for phi in Pi}, we have
\begin{equation*}
  \P\left( \P(  \varphi ^{\eta,\Lambda^1}\notin \Pi\,|\,(\eta^\perp_{v,t})_{v\in \Lambda^1,t\in\R^n})\ge 1-\frac \delta 4\right)\le \frac {1-\frac \delta 2}{ 1-\frac \delta 4}\le 1-\frac \delta 4.
\end{equation*}
Hence, with probability at least $\delta/4$, we have that the family $(\eta^\perp_{v,t})_{v\in \Lambda^1,t\in\R^n}$ is such that 
\begin{equation*}
    \P(  \varphi ^{\eta,\Lambda^1}\in \Pi\, |\,(\eta^\perp_{v,t})_{v\in \Lambda^1,t\in\R^n})\ge \frac \delta 4.
\end{equation*}
Let us now consider a family $(\eta^\perp_{v,t})_{v\in \Lambda^1,t\in\R^n}$ such that the latter inequality holds.
Let $\hat\eta^0$ and $\hat \eta^1$ be two independent copies of $\hat \eta $.
Denote by $\eta^0$ and $\eta^1$ the two corresponding disorders, that is 
\[\forall v,t\quad \forall i\in\{0,1\},\quad \eta_{v,t}^i=\eta_{v,t}^\perp +\kappa_{v,t}\hat \eta^i. \] For short, we write $\varphi^0,\varphi^1$ for $\varphi ^{\eta^0,\Lambda^1},\varphi ^{\eta^1,\Lambda^1}$.
Thanks to the previous inequality, we get
\begin{equation*}
    \P( \varphi^0 ,\varphi^1\in \Pi\, |\,(\eta^\perp_{v,t})_{v\in \Lambda^1,t\in\R^n})\ge \frac {\delta^2} {16}.
\end{equation*}
Moreover, since $\hat \eta\sim\mathcal{N}(0,|\Lambda_0|(4-2^{2H})h^{2H})$, there exists $c_\delta>0$ depending on $H,\delta$ such that
\begin{equation*}
    \P\left(|\hat \eta^0-\hat \eta^1|\ge c_\delta |\Lambda^0|^{1/2}h^{H}\right)\ge 1- \frac {\delta^2} {32}.
\end{equation*}
By the previous inequalities, with probability at least $\delta ^3/ 2^7$, we have 
$\varphi^0 ,\varphi^1\in \Pi $ and $|\hat \eta^0-\hat \eta^1|\ge c_\delta |\Lambda^0|^{1/2}h^{H}$.
Without loss of generality, assume that $\hat \eta^0\le\hat \eta^1$. Since $\Cov(\hat \eta,\eta_{v,t})\ge (\alpha h)^{2H}g(1/\alpha)$ for $\|t\|\ge h$ (see inequality \eqref{eq:lowerboundcov2} in the proof of Lemma \ref{lem:noise}) it yields that there exists $c_H>0$ only depending on $H$ such that
\begin{equation*}
\sum_{v\in\Lambda^1}\eta^0_{v,\varphi_v^1}\le \frac \delta 2 c_H (\hat \eta^0- \hat\eta^1)+ \sum_{v\in\Lambda^1}\eta^1_{v,\varphi_v^1}
\end{equation*}
and
\begin{equation*}
\GE^{\eta^0,\Lambda^1}\le \GE^{\eta^1,\Lambda^1}-  \frac \delta2 c_H c_\delta |\Lambda^0|^{1/2}h^H.
\end{equation*}
We conclude that there exists a coupling of the disorders $\eta^0,\eta^1$ such that the marginals are distributed as $\eta $ and
\begin{equation*}
\P\left(|\GE^{\eta^0,\Lambda^1}- \GE^{\eta^1,\Lambda^1}|\ge  \frac\delta 2c_Hc_\delta |\Lambda^0|^{1/2}h^H\right)\ge \frac {\delta ^3}{2^7}.
\end{equation*}
This concludes the proof.
\end{proof}

\section{Existence, uniqueness and a priori estimates}\label{sec:verifying assumptions}

\label{sec:assumptions for eta white} In this section, we prove that the minimal surface $\varphi ^{\eta , \Lambda ,\tau }$ exists and is unique almost surely. We also obtain preliminary bounds on the transversal fluctuations and the ground energy fluctuations. In particular, we prove Proposition~\ref{prop:existence} and Proposition~\ref{as:conc}.

Using \eqref{boundary}, we observe that the existence and uniqueness of  of the minimal surface $\varphi ^{\eta ,\Lambda ,\tau}$ for general boundary values $\tau$ follow from the existence and uniqueness of the minimal surface when $\tau=0$. Once this is established for $\tau =0$, the minimal surface $\varphi ^{\eta ,\Lambda ,\tau}$ is well defined for any boundary values $\tau $ and the distributional identities in \eqref{eq:distribution of surface with general boundary values} and in Proposition~\ref{prop:effect of boundary conditions} hold. Hence, we assume throughout Sections~\ref{sec:existence} and \ref{sec:uniqueness} below that $\tau =0$.

We will use the following concentration inequality for the maximum of a Gaussian process, a consequence of the fundamental results of Borell and Tsirelson--Ibragimov--Sudakov (see, e.g., \cite[Theorem~2]{zeitouni2014gaussian}).
\begin{theorem}\label{thm: BTIS}
  Let $\{X_t\}_{t\in T}$ be a Gaussian process (not necessarily centered) on a compact space $T$ with continuous covariance function and expectation.   
 Assume $S_T\coloneqq\sup_{t\in T}X_t$ is finite almost surely. Then $\E S_T $ is finite and 
\begin{equation}\label{eq:theorem5.1}
\P(|S_T-\E S_T|\ge u )\le 2e^{-\frac {u^2}{2\sigma_T^2}}    
\end{equation}
 where $\sigma_T^2\coloneqq\sup_{t\in T} \var(X_t)$.
 \end{theorem}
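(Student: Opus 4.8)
The plan is to prove Theorem~\ref{thm: BTIS} by the classical three-step strategy: reduce the supremum to a finite index set, establish Gaussian concentration for Lipschitz functions on Euclidean space, and then pass to the limit (which will simultaneously yield $\E S_T<\infty$). For the reduction, note that since $T$ is compact and the covariance and mean functions are continuous, the process $\{X_t\}$ is stochastically continuous, so, passing if necessary to a separable modification, $S_T=\sup_{t\in T_0}X_t$ a.s.\ for some fixed countable dense $T_0\subset T$. Enumerating $T_0$, writing $F_n$ for its first $n$ points and $S_n:=\max_{t\in F_n}X_t$, one gets $S_n\uparrow S_T$ a.s.\ with $S_n\le S_T$ for every $n$. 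It then suffices to prove, uniformly in $n$, that $\P(|S_n-\E S_n|\ge u)\le 2\exp(-u^2/(2\sigma_T^2))$, and to let $n\to\infty$.

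For the finite-dimensional estimate, I would write the Gaussian vector $(X_t)_{t\in F_n}$ as $m+Ag$ in distribution, where $m$ is its mean, $AA^\top$ its covariance and $g$ standard Gaussian, so that $S_n\eqd G(g)$ with $G(x):=\max_{t\in F_n}(m_t+(Ax)_t)$. A short computation shows $G$ is $\sigma_T$-Lipschitz: if $t^\ast$ attains the maximum defining $G(x)$ then $G(x)-G(y)\le\langle A^\top e_{t^\ast},\,x-y\rangle\le\|A^\top e_{t^\ast}\|\,\|x-y\|$, and $\|A^\top e_{t^\ast}\|^2=\var(X_{t^\ast})\le\sigma_T^2$; in particular $\E|G(g)|<\infty$. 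Then I would invoke the Gaussian concentration inequality for Lipschitz functions: via the Gaussian logarithmic Sobolev inequality and the Herbst argument one obtains the subgaussian Laplace bound $\E\exp(\lambda(G(g)-\E G(g)))\le\exp(\sigma_T^2\lambda^2/2)$ for all $\lambda\in\R$, whence Chernoff's bound (optimized at $\lambda=u/\sigma_T^2$) together with a union bound over $\pm G$ gives $\P(|S_n-\E S_n|\ge u)\le 2\exp(-u^2/(2\sigma_T^2))$. (Alternatively, the Gaussian isoperimetric inequality yields concentration around the median, which can then be transferred to the mean.)

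The remaining task is to pass to the limit, and this is where the main obstacle lies: one must show $\sup_n\E S_n<\infty$ (the sequence $\E S_n$ being nondecreasing). If it were unbounded, the concentration estimate of the previous paragraph would force $\med(S_n)\ge\E S_n-u_0\to\infty$ for a fixed $u_0$ with $2e^{-u_0^2/(2\sigma_T^2)}<\tfrac12$, contradicting $S_n\le S_T$ together with the a.s.\ finiteness of $S_T$ (choosing $M$ with $\P(S_T>M)<\tfrac12$ forces $\med(S_n)\le M$ for all $n$). Granting the bound, write $S_n=X_{t_1}+(S_n-X_{t_1})$ with $S_n-X_{t_1}\ge0$ nondecreasing; monotone convergence and the integrability of the Gaussian $X_{t_1}$ give $\E S_T=\lim_n\E S_n<\infty$. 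Finally $S_n-\E S_n\to S_T-\E S_T$ a.s., so Fatou's lemma applied to $\mathds{1}\{|S_n-\E S_n|>v\}$ gives $\P(|S_T-\E S_T|>v)\le\liminf_n\P(|S_n-\E S_n|>v)\le 2e^{-v^2/(2\sigma_T^2)}$, and sending $v\uparrow u$ yields~\eqref{eq:theorem5.1}. The one genuinely analytic ingredient, the Gaussian concentration inequality for Lipschitz functions (equivalently the Gaussian log-Sobolev or isoperimetric inequality), I would cite rather than reprove.
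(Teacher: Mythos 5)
This statement is the classical Borell--Tsirelson--Ibragimov--Sudakov concentration inequality, and the paper does not prove it: it is invoked as a black box with the citation ``see, e.g., \cite[Theorem~2]{zeitouni2014gaussian}''. So there is no in-paper argument to compare against; what you have written is the standard textbook proof, and it is correct. The three steps are all sound: the finite-dimensional reduction via a separable modification (legitimate here because continuity of the covariance on compact $T$ gives $L^2$-, hence stochastic, continuity; and in the paper's applications the processes are in fact a.s.\ sample-continuous, so this technicality is harmless); the representation $S_n\eqd G(g)$ with $G$ being $\sigma_T$-Lipschitz, where your computation $\|A^\top e_{t^\ast}\|^2=\var(X_{t^\ast})\le\sigma_T^2$ is exactly right and the Herbst/log-Sobolev route does give the sharp Laplace bound $\exp(\sigma_T^2\lambda^2/2)$, hence the constant $2e^{-u^2/(2\sigma_T^2)}$ matching the statement (a Pisier-type interpolation would only give a worse constant, so your choice of tool matters here); and the limiting argument, where the median-versus-mean contradiction establishing $\sup_n\E S_n<\infty$ is the one genuinely nontrivial point and you handle it correctly, as you do the final Fatou step and the passage from strict to non-strict inequality by sending $v\uparrow u$. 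Citing the Gaussian Lipschitz concentration inequality rather than reproving it is entirely consistent with the paper's own level of detail, which cites the full theorem.
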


\subsection{Existence and preliminary bounds}\label{sec:existence}

For brevity, we will write $\varphi^{\eta}$ for $\varphi^{\eta,\Lambda }$ and $\GE^{\eta}$ for $\GE^{\eta,\Lambda}$. Define the following sets of restricted surfaces 
\begin{equation}
\Omega_{k}:=\big\{\varphi\in\Omega^{\Lambda }:\|\nabla\varphi\|_{\Lambda}^{2}\le k|\Lambda|\big\},\quad \Omega^h\coloneqq\{\varphi \in \Omega ^{\Lambda } : \max _{v\in \Lambda } \|\varphi_v\|\le h\}.\label{eq:Omega_k}
\end{equation}
We start with the next lemma that will enable us to exclude that the minimal surface with constrained height has a too large Dirichlet energy. 
\begin{lemma}\label{lem:conc_lem} There exist $C_1,c>0$ depending
only on $d$ and $n$ such that for all $h\ge 1$ and all $k\ge C_1h^{2H}$
\begin{equation}
\mathbb{P}\Big(\exists\varphi\in\Omega_{k}\cap \Omega^h,\ \sum_{v\in\Lambda}\eta_{v,\varphi_{v}}\le-k|\Lambda|/4\Big)\le\exp\big(-ck^{2}h^{-2H}|\Lambda|\big).\label{eq:926}
\end{equation}
\end{lemma}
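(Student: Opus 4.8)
The plan is to discard the Dirichlet-energy constraint entirely: dropping the requirement $\varphi\in\Omega_{k}$ only enlarges the event, so it yields a valid upper bound, and the resulting quantity is the supremum of a centered Gaussian process over the \emph{product} set $\Omega^{h}=\prod_{v\in\Lambda}\{t\in\R^{n}:\|t\|\le h\}$, whose relevant variance and expected supremum are both easy to evaluate. Concretely, for every $\varphi\in\Omega_{k}\cap\Omega^{h}$,
\[
-\sum_{v\in\Lambda}\eta_{v,\varphi_{v}}\ \le\ S\ :=\ \sup_{\psi\in\Omega^{h}}\Big(-\sum_{v\in\Lambda}\eta_{v,\psi_{v}}\Big)\ =\ \sum_{v\in\Lambda}\sup_{\|t\|\le h}\big(-\eta_{v,t}\big),
\]
the last equality because the supremum over a product set decouples over the coordinates $v$. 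Hence the probability in \eqref{eq:926} is at most $\P\big(S\ge k|\Lambda|/4\big)$.

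Next I would apply the Gaussian concentration inequality of Theorem~\ref{thm: BTIS} to the centered Gaussian process $\psi\mapsto-\sum_{v\in\Lambda}\eta_{v,\psi_{v}}$ on the compact set $\Omega^{h}$ (a finite product of closed Euclidean balls): its covariance function is continuous, its sample paths are a.s.\ continuous, $S$ is a.s.\ finite (being a finite sum of suprema of continuous Gaussian fields over compact sets), and its pointwise variance equals $\sum_{v\in\Lambda}\|\psi_{v}\|^{2H}\le|\Lambda|h^{2H}$. Theorem~\ref{thm: BTIS} then gives $\P\big(|S-\E S|\ge u\big)\le 2\exp\!\big(-u^{2}/(2|\Lambda|h^{2H})\big)$ for all $u>0$. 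To estimate $\E S$, I would combine the decoupling above with the self-similarity \eqref{eq:fBM self similarity}, which yields $\sup_{\|t\|\le h}(-\eta_{v,t})\eqd h^{H}\sup_{\|t\|\le1}(-B^{H}_{t})$, whence
\[
\E S=|\Lambda|\,h^{H}C_{0},\qquad C_{0}:=\E\big[\sup_{\|t\|\le1}(-B^{H}_{t})\big]<\infty,
\]
the finiteness of $C_{0}$ following again from Theorem~\ref{thm: BTIS} (alternatively, from a Dudley entropy estimate in the metric $(t,s)\mapsto\|t-s\|^{H}$ on the unit ball).

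To conclude, I would fix $C_{1}$ large enough, depending only on $d$ and $n$, that $C_{1}\ge 8C_{0}$; then for $h\ge1$ and $k\ge C_{1}h^{2H}$ one has $k\ge 8C_{0}h^{2H}\ge 8C_{0}h^{H}$, hence $\E S\le k|\Lambda|/8$, so that $\{S\ge k|\Lambda|/4\}\subseteq\{S-\E S\ge k|\Lambda|/8\}$. Applying the bound of Theorem~\ref{thm: BTIS} with $u=k|\Lambda|/8$ then gives
\[
\P\big(S\ge\tfrac14 k|\Lambda|\big)\le 2\exp\!\Big(-\frac{k^{2}|\Lambda|}{128\,h^{2H}}\Big),
\]
and since $k^{2}h^{-2H}\ge C_{1}^{2}$ is bounded away from $0$ (using $k\ge C_{1}h^{2H}$ and $h\ge1$), the prefactor $2$ is absorbed by slightly decreasing the constant, giving \eqref{eq:926}. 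The only steps requiring genuine care are the continuity and finiteness verifications that license Theorem~\ref{thm: BTIS}, and above all the control of $C_{0}=\E[\sup_{\|t\|\le1}(-B^{H}_{t})]$: its finiteness is immediate, but fixing the numerical value of $C_{1}$ relies on a quantitative bound for it. Everything else is routine bookkeeping, and, notably, the constraint defining $\Omega_{k}$ enters only through the hypothesis $k\ge C_{1}h^{2H}$.
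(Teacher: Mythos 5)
Your proof is correct, but it takes a genuinely different (and shorter) route than the paper's. The paper keeps the Dirichlet-energy constraint and pays for it with an entropy bound: it discretizes surfaces to integer-valued ones in $\tilde{\Omega}_{2k}$, replaces the disorder by the cube-infima $\eta'_{v,z}$ whose sub-Gaussian norms are $O(h^H)$, applies sub-Gaussian concentration to each fixed discrete surface, and union bounds over $|\tilde{\Omega}_{2k}|\le e^{4k|\Lambda|}$, using $k\ge C_1h^{2H}$ to make the Gaussian exponent $ck^2h^{-2H}|\Lambda|\ge cC_1k|\Lambda|$ beat the entropy term. You instead discard $\Omega_k$ entirely, observe that the supremum over the product set $\Omega^h$ decouples as $\sum_{v}\sup_{\|t\|\le h}(-\eta_{v,t})$, and apply Theorem~\ref{thm: BTIS} once, using the hypothesis on $k$ only to dominate the mean $C_0h^H|\Lambda|$ (you in fact only need $k\ge 8C_0h^H$, which follows from $k\ge C_1h^{2H}$ and $h\ge1$). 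Both arguments hinge on independence of the disorder across vertices and on $\var(\eta_{v,t})=\|t\|^{2H}$; yours exploits the Gaussian structure more directly, whereas the paper invokes Borell--TIS only pointwise, to establish \eqref{eq:957}. What the paper's discretization buys is robustness: it transfers to non-Gaussian disorders whose local infima are sub-Gaussian (as in their earlier work with independent disorder), and it genuinely uses the gradient constraint, whereas your bound is on the strictly larger event $\{\exists\varphi\in\Omega^h:\dots\}$ --- which, in the stated regime $k\ge C_1h^{2H}$, still has the claimed probability, so nothing is lost for this lemma. The two points of care you flag (the hypotheses of Theorem~\ref{thm: BTIS} and the finiteness of $C_0=\E[\sup_{\|t\|\le1}(-B^H_t)]$) are real but routine, and are handled exactly as the paper handles \eqref{eq:957}; note only that $C_0$, hence $C_1$, depends on $H$ as well as $n$, the same (implicit) dependence present in the paper's constants.
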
 

Using the previous lemma, we obtain bounds on the maximal height of the minimal surface.

\begin{cor}\label{cor:maxphi}
The minimal surface $\varphi ^{\eta ,\Lambda }$ exists almost surely. Moreover, letting $L:=\max _{v\in \Lambda } \min _{u\notin \Lambda }\|u-v\|_1$, there exist $C_2,c>0$ such that for any $h\ge C_2(L|\Lambda |)^{\frac{1}{2-2H}}$, we have 
\begin{equation}\label{eq:bound135}
    \mathbb P \big( \max _{v} \|\varphi ^\eta _v\| \ge h \big) \le  \exp \Big( -c\frac{h^{4-2H}}{L^2|\Lambda |} \Big).
\end{equation}
\end{cor}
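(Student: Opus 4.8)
The plan is to combine the concentration estimate of Lemma~\ref{lem:conc_lem} with a simple competitor-surface argument. First I would establish existence: restrict attention to the height-constrained configuration space $\Omega^h$ for a suitable large $h$. On this compact set the Dirichlet energy is a continuous function and the disorder term $\sum_{v\in\Lambda}\eta_{v,\varphi_v}$ is (almost surely) continuous in $\varphi$, since $t\mapsto\eta_{v,t}$ is sample-path continuous for each of the finitely many $v\in\Lambda$; hence a minimizer over $\Omega^h$ exists. To conclude that this is actually a global minimizer over $\Omega^\Lambda$, I would show that with probability one the global infimum is attained at a surface of bounded height: the zero surface has energy $H^{\eta,\Lambda}(0)=\sum_{v\in\Lambda}\eta_{v,0}$, which is finite a.s., so any candidate minimizer $\varphi$ must satisfy $\frac12\|\nabla\varphi\|_\Lambda^2+\sum_v\eta_{v,\varphi_v}\le\sum_v\eta_{v,0}$; the quantitative version of this bound, via Lemma~\ref{lem:conc_lem}, rules out large heights.

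For the quantitative bound \eqref{eq:bound135}, fix $h\ge C_2(L|\Lambda|)^{\frac1{2-2H}}$ and observe that if $\max_v\|\varphi^\eta_v\|\ge h$ then, because the minimal surface equals $0$ off $\Lambda$ and $L$ bounds the $\ell^1$-distance from any vertex of $\Lambda$ to $\Lambda^c$, the discrete gradient must traverse height $\ge h$ over a path of length $\le L$; a crude bound gives $\|\nabla\varphi^\eta\|_\Lambda^2\ge c h^2/L$. Actually it is cleaner to argue by comparison: on the event that the minimal surface has some vertex with $\|\varphi^\eta_v\|\ge h$, consider truncating $\varphi^\eta$ at height $h/2$ (or comparing to the harmonic extension of the truncated boundary data); one shows the truncated surface $\psi\in\Omega^{h/2}$ has Dirichlet energy at most that of $\varphi^\eta$, hence by minimality
\begin{equation}
\tfrac12\|\nabla\varphi^\eta\|_\Lambda^2+\sum_{v\in\Lambda}\eta_{v,\varphi^\eta_v}\le \tfrac12\|\nabla\psi\|_\Lambda^2+\sum_{v\in\Lambda}\eta_{v,\psi_v},
\end{equation}
so $\sum_v(\eta_{v,\varphi^\eta_v}-\eta_{v,\psi_v})\le 0$, and combined with the a-priori Dirichlet-energy lower bound forced by reaching height $h$, one gets that $\varphi^\eta$ lies in $\Omega_k\cap\Omega^h$ with $k\asymp h^2/(L|\Lambda|)$ while $\sum_v\eta_{v,\varphi^\eta_v}\le -k|\Lambda|/4$. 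The condition $h\ge C_2(L|\Lambda|)^{\frac1{2-2H}}$ is exactly what makes $k\ge C_1h^{2H}$, so Lemma~\ref{lem:conc_lem} applies and yields failure probability $\le\exp(-ck^2h^{-2H}|\Lambda|)=\exp(-c h^{4-2H}/(L^2|\Lambda|))$, using $k^2h^{-2H}|\Lambda|\asymp h^{4-2H}/(L^2|\Lambda|)$.

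The main obstacle I expect is getting the deterministic geometry right: namely producing the clean bound "$\max_v\|\varphi^\eta_v\|\ge h$ forces $\varphi^\eta\in\Omega_k$ with $k\gtrsim h^2/(L|\Lambda|)$ \emph{and} $\sum_v\eta_{v,\varphi^\eta_v}\le -k|\Lambda|/4$" simultaneously, so that a single application of Lemma~\ref{lem:conc_lem} closes the argument. The subtlety is that reaching height $h$ only directly lower-bounds the gradient along one path; to feed Lemma~\ref{lem:conc_lem} one needs an upper Dirichlet-energy bound as well, which comes from comparing with a truncated competitor and invoking minimality (the competitor's disorder energy is controlled because its heights are $\le h$, and a separate, cruder application of the lemma at scale $h/2$ controls that term). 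One then takes a union over the dyadic range of possible values of $\|\nabla\varphi^\eta\|_\Lambda^2$, each contributing a term of the stated exponential form, and the geometric series is dominated by its largest term. Existence then follows by letting $h\to\infty$ along this estimate.
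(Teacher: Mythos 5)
Your proposal is essentially the paper's proof: the Cauchy--Schwarz path bound $\|\varphi_{v_0}\|^2\le L\|\nabla\varphi\|_\Lambda^2$ linking height to Dirichlet energy, the resulting inclusion $\Omega_{k_0(h)}\subseteq\Omega^h$ with $k_0(h)\asymp h^2/(L|\Lambda|)$, the application of Lemma~\ref{lem:conc_lem} (whose hypothesis $k\ge C_1h^{2H}$ is exactly the stated restriction on $h$), the union over dyadic height/energy scales, and existence via compactness of $\Omega^h$ all match. One remark: your preferred route via a truncated competitor $\psi$ is an unneeded detour whose closing step is shaky --- Lemma~\ref{lem:conc_lem} bounds the probability that \emph{some} surface in $\Omega_k\cap\Omega^h$ has very negative disorder sum, but does not furnish the upper bound on $\sum_v\eta_{v,\psi_v}$ for the specific competitor that your argument would require. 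The paper instead compares with the zero surface, whose energy is exactly $H^{\eta,\Lambda}(0)=0$ by the normalization $B^H_0=0$; this immediately gives $\sum_v\eta_{v,\varphi_v}\le -\tfrac12\|\nabla\varphi\|_\Lambda^2\le-(k-1)|\Lambda|/2$ on each shell $\Omega_k\setminus\Omega_{k-1}$, which is all that is needed (and is the comparison you already invoke, correctly, in your existence paragraph).
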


\begin{proof}
    Let $\varphi \in \Omega ^{\Lambda }$. Let $v_0$ be the vertex maximizing $\|\varphi  _v\|$ and let $v_0,v_1,\dots ,v_{L}$ be a path starting from $v_0$ and ending at $v_{L}\notin \Lambda $. Using that $\varphi _{v_{L}}=0$ and Cauchy-Schwarz inequality we obtain
\begin{equation}
    \|\varphi _{v_0} \| ^2 \le \Big( \sum _{j=1}^L \|\varphi  _{v_j}-\varphi   _{v_{j-1}}\| \Big)^2 \le L\sum _{j=0}^L \|\varphi _{v_j}-\varphi  _{v_{j-1}}\|^2 \le L \|\nabla \varphi \|^2  
\end{equation}
Thus, we have that $\Omega _{k_0(h)} \subseteq \Omega ^h$ where $k_0(h):= \lfloor  h^2/(L|\Lambda |) \rfloor $. 

Next, let $h\ge C_2(L|\Lambda |)^{\frac{1}{2-2H}}$ where $C_2$ is chosen sufficiently large so that $k_0(h) \ge C_1(2h)^{2H}$, where $C_1$ is the constant from Lemma~\ref{lem:conc_lem}. For any $\varphi \notin \Omega _h$, there is an integer $k> k_0(h)$ such that $\varphi \in \Omega _k\setminus \Omega _{k-1}$. Moreover, for such $\varphi\in \Omega _k\setminus \Omega _{k-1}$ with $k> k_0(h)$, if  $H^{\eta }(\varphi )\le h^2/(4L)$, then
\[\sum _{v\in \Lambda } \eta _{v,\varphi _v}=H^\eta (\varphi )-\tfrac{1}{2}\|\nabla \varphi \|^2 \le h^2/(4L)-(k-1)|\Lambda |/2\le -k|\Lambda |/4.\]
Thus, union bounding over integers $k>k_0(h)$ we obtain
\begin{equation}
\begin{split}\mathbb{P}\Big(\exists\varphi\in\Omega^{2h}\setminus\Omega ^h,H^{\eta}(\varphi)\le \frac{h^2}{4L}\Big)  &\le\sum_{k>k_0(h)}\mathbb{P}\Big(\exists\varphi\in\Omega_{k}\cap \Omega^{2h},\sum_{v\in\Lambda_L}\eta_{v,\varphi_{v}}\le-k|\Lambda |/4\Big)\\
 & \le\sum_{k>k_0(h)}\exp\big(-ck^2h^{-2H}|\Lambda |\big)\le\exp\Big(-c\frac{h^{4-2H}}{L^2|\Lambda |}\Big)
\end{split}
\end{equation}
where in the second inequality we used Lemma~\ref{lem:conc_lem} and that $k_0(h)\ge C_1(2h)^{2H}$ and in the last inequality we used that $h\ge C_2(L|\Lambda |)^{\frac{1}{2-2H}}\gg (L^2|\Lambda | )^{\frac{1}{4-2H}}$, so that the expression inside the exponent is large. Thus, we get
 \begin{equation*}
     \mathbb{P}\Big(\exists\varphi\in \Omega ^{\Lambda } \setminus \Omega ^{h},\ H^{\eta}(\varphi)\le \frac{h^2}{4L}\Big) \le \sum_{j\ge 1} \mathbb{P}\Big(\exists\varphi\in\Omega^{2^j h}\setminus\Omega ^{2^{j-1}h}, \ H^{\eta}(\varphi)\le \frac{h^2}{4L}\Big)  \le  \exp\Big(-c\frac{h^{4-2H}}{L^2|\Lambda |}\Big).
    \end{equation*}
Finally, note that $\Omega ^h$ is compact and that the zero function on $\Lambda $ satisfies $H^\eta(0)= 0$. Thus, on the complement of the event in the last equation, the minimal surface $\varphi ^{\eta }$ exists and is in $\Omega ^{h}$.
The bound in \eqref{eq:bound135} also follows from the last estimate.
\end{proof}

\begin{proof}[Proof of Lemma~\ref{lem:conc_lem}]
Fix $d$, $n$. Let $C_1>0$ be constant to be chosen
later. Let $k\ge C_1h^{2H}$. In order to prove \eqref{eq:926}
we discretize $\Omega_{k}$ by: 
\begin{equation}
\tilde{\Omega}_{k}:=\Omega_{k}\cap\{\varphi:\Z^{d}\to\Z^{n}\}
.\label{eq:defomegatilde}
\end{equation}
We also discretize the disorder $\eta$ in the following way. For any
$v\in\Lambda$ and $z\in\mathbb{Z}^{n}$
define 
\[
\eta'_{v,z}:=\inf\big\{\eta_{v,s}:s\in z+[0,1]^{n}\big\}.
\]
We claim that there exists $c>0$ such that for all $v\in\Lambda$, $z\in \mathbb Z ^n$ and $x>0$ we have 
\begin{equation}
\mathbb{P}\big(|\eta'_{v,z}|\ge x\big)\le2e^{-cx^{2}/\|z\|^{2H}}.\label{eq:957}
\end{equation}
    Indeed, we would like to apply Theorem~\ref{thm: BTIS} with $T:=z+[0,1]^n$ and with the Gaussian process $X_z(s):=-\eta _{v,s}$. This process is almost surely continuous and therefore has continuous covariance function and expectation. Thus, starting with $z=0$ we have by Theorem~\ref{thm: BTIS} that $\mathbb E [\eta '_{v,z}]\ge -C$ for some constant $C$ depending only $n$ and $H$. Moreover, for any other $z\in \mathbb Z ^n$ we have that $\eta '_{v,z}-\eta _{v,z}\overset{d}{=}\eta '_{v,0}$ and therefore $\mathbb E [\eta '_{v,z}]\ge -C$. Moreover, for any $s\in z+[0,1]^n$ we have $\var (X(s)) \le C\|z\|^{2H} $ and therefore \eqref{eq:957} follows from \eqref{eq:theorem5.1}.

It is convenient to extend $\eta '_{v,\cdot }$ from a function on $\mathbb Z ^n$ to a function on $\mathbb R^n$. For any $z\in \mathbb Z ^n$ and $t\in z+[0,1)^n$ we let $\eta '_{v,t}:=\eta '_{v,z}$.

Next, we use some basic facts about sub-Gaussian random variables given in \cite{vershynin2020high}. Recall that the sub-Gaussian norm of a random
variable $Y$ is given by 
\begin{equation}
\|Y\|_{\psi_{2}}\coloneqq\inf\big\{ x>0:\E[\exp(Y^{2}/x^{2})]\le2\big\}.
\end{equation}
(see, e.g., \cite[Definition 2.5.6]{vershynin2020high}). It follows from \cite[Exercise 2.5.7]{vershynin2020high} together with equation \eqref{eq:957} that for any $\|t\|\le 2h$ we have $\|\eta'_{v,t}\|_{\psi_{2}}\le Ch^H$.

Note that for $v\ne w\in\Lambda$ and $s,t\in\R^{n}$, the random
variables $\eta'_{v,s}$ and $\eta'_{w,t}$ are independent and therefore by \cite[Proposition 2.6.1]{vershynin2020high} it follows that
for every $\varphi\in\tilde{\Omega}_{2k}\cap\Omega^{2h}$  we have $\|\sum_{v\in\Lambda}\eta'_{v,\varphi_{v}}\|_{\psi_{2}}^{2}\le Ch^{2H}|\Lambda|$. Hence, by \cite[Equation (2.14)]{vershynin2020high} for every such
$\varphi$ 
\begin{equation}
\begin{split}\P\Big(\sum_{v\in\Lambda}\eta'_{v,\varphi_{v}}\le-k|\Lambda|/4\Big)\le\P\Big(|\sum_{v\in\Lambda}\eta'_{v,\varphi_{v}}|\ge k|\Lambda|/4\Big)\le2\exp\big(-ck^{2}h^{-2H}|\Lambda|\big).
\end{split}
\label{eq:6385}
\end{equation}
We next union bound the last bound over $\varphi\in\Tilde{\Omega}_{2k}$.
Enumerating over the differences between $\varphi_{u}$ and $\varphi_{v}$
for $u\sim v$ we obtain, for $k>C$ 
\begin{equation}
\begin{split}\big|\tilde{\Omega}_{2k}\big| & \le\Big|\Big\{\big(a_{u,v}^{i}\in\mathbb{Z}\ :\ u\sim v,\{u,v\}\cap\Lambda\neq\emptyset,i\in\{1,\dots,n\}\big):\sum|a_{u,v}^{i}|\le2k|\Lambda|\Big\}\Big|\\
 & \le2^{N_{\Lambda}}{2k|\Lambda|+nN_{\Lambda}-1 \choose nN_{\Lambda}-1}\le e^{4k|\Lambda|},
\end{split}
\label{eq:contomegak}
\end{equation}
where $N_{\Lambda}:=\big|\{u\sim v,\{u,v\}\cap\Lambda\neq\emptyset\}\big|$.
Using that $k\ge C_1h^{2H}$ and that $C_1$ can be chosen sufficiently large, we obtain by \eqref{eq:6385} and \eqref{eq:contomegak} that
\begin{equation}
\mathbb{P}\Big(\exists\varphi\in\tilde{\Omega}_{2k}\cap\Omega^{2h}, \ \sum_{v\in\Lambda}\eta'_{v,\varphi_{v}}\le-k|\Lambda|/4\Big)\le2\exp\big(-ck^{2}h^{-2H}|\Lambda|\big).\label{eq:927}
\end{equation}
We claim that \eqref{eq:926} follows from \eqref{eq:927}. Indeed,
for any $\varphi\in\Omega_{k}\cap \Omega ^h$, we consider the function $\tilde{\varphi}$
defined by $\tilde{\varphi}_{v}:=\lfloor\varphi_{v}\rfloor$ (in here
the floor sign of a vector $t\in\mathbb{R}^{n}$ is taken in each
coordinate). For such $\varphi $ we have that
\begin{equation}
\|\nabla\tilde{\varphi}\|_{\Lambda}^{2}\le(\|\nabla\varphi\|_{\Lambda}+\|\nabla(\tilde{\varphi}-\varphi)\|_{\Lambda})^{2}\le\big(\sqrt{k|\Lambda|}+\sqrt{2dn|\Lambda|}\big)^{2}\le2k|\Lambda|\label{eq:norml2discretephi}
\end{equation}
and therefore $\tilde{\varphi}\in\tilde{\Omega}_{2k}\cap \Omega^{2h}$. Finally, by
the definition of $\eta'$, for any surface $\varphi$ we
have that $\sum\eta'_{v,\tilde{\varphi}_{v}} \le \sum\eta_{v,\varphi_{v}}$ and therefore \eqref{eq:926} follows from \eqref{eq:927}. 
\end{proof}

\subsection{Uniqueness}\label{sec:uniqueness}

In order to prove Proposition~\ref{prop:existence} it remains to prove the following claim. The proof of this claim is identical to the uniqueness proof of the minimal surface in the $\eta ^{\rm white}$ environment. See \cite[Lemma~5.4]{dembin2024minimal}.

\begin{claim}
    The minimal surface $\varphi ^{\eta ,\Lambda }$ is almost surely unique. Namely, there is a unique $\varphi $ for which $\GE ^{\eta ,\Lambda }=H^{\eta,\Lambda } (\varphi )$.
\end{claim}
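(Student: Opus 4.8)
The plan is to reduce uniqueness of the full minimal surface to a one‑dimensional ``no ties'' statement for the fractional Brownian field, exactly along the lines of~\cite[Lemma~5.4]{dembin2024minimal}. Since the passage has already reduced to $\tau\equiv 0$, and since every $\varphi\in\Omega^{\Lambda}$ vanishes off the \emph{finite} set $\Lambda$, it suffices to prove that for each fixed $v_0\in\Lambda$, almost surely every minimizer of $H^{\eta,\Lambda}$ on $\Omega^{\Lambda}$ takes the same value at $v_0$; intersecting these finitely many almost sure events (and the almost sure existence event of Corollary~\ref{cor:maxphi}) then forces any two minimizers to coincide.

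Fix $v_0\in\Lambda$ and put
\[
G(t):=\inf\Big\{\tfrac12\|\nabla\varphi\|_{\Lambda}^{2}+\sum_{v\in\Lambda\setminus\{v_0\}}\eta_{v,\varphi_{v}}\ :\ \varphi\in\Omega^{\Lambda},\ \varphi_{v_0}=t\Big\},\qquad t\in\R^{n}.
\]
Using the quadratic growth of the Dirichlet energy together with the a priori height bound of Corollary~\ref{cor:maxphi} (which in particular shows the disorder grows strictly subquadratically on the relevant scales), the infimum is attained and $G$ is finite, continuous and coercive. Crucially, $G$ is measurable with respect to $(\eta_{v,\cdot})_{v\in\Lambda\setminus\{v_0\}}$, hence independent of the fractional Brownian field $\eta_{v_0,\cdot}$. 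A short computation gives $\GE^{\eta,\Lambda}=\min_{t\in\R^{n}}\big(\eta_{v_0,t}+G(t)\big)$ and shows that $\varphi_{v_0}\in\arg\min_{t}\big(\eta_{v_0,t}+G(t)\big)$ for every minimizer $\varphi$ of $H^{\eta,\Lambda}$. Thus it is enough to prove: if $g:\R^{n}\to\R$ is continuous, coercive and independent of $\eta_{v_0,\cdot}$, then $t\mapsto\eta_{v_0,t}+g(t)$ has an almost surely unique minimizer. Conditioning on $g$, this becomes the purely Gaussian statement that an $n$‑dimensional fractional Brownian field $B^{H}$, plus a deterministic continuous coercive function, has a unique minimizer a.s.

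For the latter, suppose two distinct minimizers existed. Choosing $q$ in a fixed countable dense subset of $\R^{n}$ close to one of them and a suitably small rational $\eps>0$, the event
\[
A_{q,\eps}:=\big\{\text{$B^{H}_{\cdot}+g$ has two minimizers, one in $\overline B(q,\eps/4)$ and one outside $B(q,\eps)$}\big\}
\]
occurs, so it suffices to show $\P(A_{q,\eps})=0$ for each such $q,\eps$. Condition on the $\sigma$‑algebra generated by $(B^{H}_t)_{\|t-q\|\ge\eps/2}$: then $m:=\inf_{\|t-q\|\ge\eps}\big(B^{H}_t+g(t)\big)$ is deterministic, while on $A_{q,\eps}$ one has $\inf_{t\in\overline B(q,\eps/4)}\big(B^{H}_t+g(t)\big)=m$. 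The conditional law of $(B^{H}_t)_{t\in\overline B(q,\eps/4)}$ given the exterior values is a non‑degenerate Gaussian field — a fractional Brownian field is not locally deterministic, e.g.\ $B^{H}_q$ retains strictly positive conditional variance — and this is used to show that the random variable $\inf_{t\in\overline B(q,\eps/4)}\big(B^{H}_t+g(t)\big)$ has no atom at the deterministic level $m$, whence $\P(A_{q,\eps})=0$. Summing over the countable index set finishes the argument.

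I expect the last anti‑concentration step to be the only real obstacle: $\inf_{t\in\overline B(q,\eps/4)}(B^{H}_t+g(t))$ is a nonlinear functional of a Gaussian field — essentially a concave function of the one‑dimensional Gaussian factor one extracts — so it could a priori put positive mass on a single value, and one must genuinely exploit the non‑degeneracy of the conditional law of $B^{H}$ (together with the fact that $m$ is built only from the independent exterior data) to exclude a tie at the level $m$. All the remaining pieces — the identity $\GE^{\eta,\Lambda}=\min_t(\eta_{v_0,t}+G(t))$, the regularity and coercivity of $G$, and the countable covering — are routine, and the whole scheme is that of~\cite[Lemma~5.4]{dembin2024minimal}.
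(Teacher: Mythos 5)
Your overall strategy is sound and is in fact a close cousin of the paper's: both arguments rule out ties by conditioning on the disorder away from a ``gap region'' and exploiting the non-degeneracy of the conditional law of the fractional Brownian field inside it. The differences are that you first reduce to a single-site variational problem $\min_t(\eta_{v_0,t}+G(t))$, whereas the paper works directly with ground energies restricted to the half-spaces $\{(\varphi_v)_i\le q_1\}$ and $\{(\varphi_v)_i\ge q_2\}$ of configuration space, and that your gap region is an annulus around a rational center rather than a coordinate slab $\{q_1<t_i<q_2\}$. The single-site reduction is correct (the identity $\GE^{\eta,\Lambda}=\min_t(\eta_{v_0,t}+G(t))$ and the fact that every minimizer's value at $v_0$ lies in the argmin are immediate, and continuity/coercivity of $G$ follow from Corollary~\ref{cor:maxphi} and Lemma~\ref{lem:conc_lem}), and the countable covering by the events $A_{q,\eps}$ is fine.

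The gap is exactly where you place it: you never prove that $\inf_{t\in\overline B(q,\eps/4)}\big(B^H_t+g(t)\big)$ has no conditional atom at the $\mathcal G$-measurable level $m$, and this no-atom statement \emph{is} the uniqueness proof (the paper's analogous step is the assertion that $\GE_+(v,i,q_2)$ has a continuous conditional distribution given $\mathcal F(v,i,q_1)$, which it likewise leaves as ``not hard to check''). The step is fillable, but it needs two named inputs rather than an appeal to non-degeneracy at the single point $q$: (i) strong local non-determinism of the fractional Brownian field (Pitt), giving $\var\big(B^H_t\mid (B^H_s)_{\|s-q\|\ge\eps/2}\big)\ge c\,\eps^{2H}>0$ \emph{uniformly} over $t\in\overline B(q,\eps/4)$ --- positive conditional variance at $q$ alone does not suffice, since by Tsirelson's theorem the infimum of a Gaussian process may still carry an atom at the top of its support, and that atom could a priori coincide with $m$; and (ii) the Ylvisaker-type theorem that an a.s.\ finite supremum of a Gaussian process whose pointwise variances are bounded below by a positive constant, with bounded mean, has an absolutely continuous law. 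Writing $B^H_t=\E[B^H_t\mid\mathcal G]+B^{\perp}_t$ on the small ball and applying (ii) conditionally on $\mathcal G$ to $-\big(B^{\perp}_t+g(t)+\E[B^H_t\mid\mathcal G]\big)$ then yields $\P(A_{q,\eps})=0$. With these two ingredients made explicit, your argument is complete and constitutes an acceptable alternative to the paper's half-space decomposition.
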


\begin{proof}
Define the following sets for all $v\in\Lambda$, $i\le n$ and
$q\in\mathbb{R}$ 
\begin{equation}
\Omega_{-}(v,i,q):=\Omega^{\Lambda}\cap\big\{\varphi:\mathbb{Z}^{d}\to\mathbb{R}^{n}:(\varphi_v)_{i}\le q\big\},
\end{equation}
\begin{equation}
\Omega_{+}(v,i,q):=\Omega^{\Lambda}\cap\big\{\varphi:\mathbb{Z}^{d}\to\mathbb{R}^{n}:(\varphi_v)_{i}\ge q\big\}.
\end{equation}
Next, define $\varphi_{-}(v,i,q)$ and $\varphi_{+}(v,i,q)$ to be
the the functions $\varphi$ that minimize the Hamiltonian and restricted
to be in $\Omega_{+}(v,i,q)$ and $\Omega_{-}(v,i,q)$ respectively.
These functions exist by the same arguments as in Corollary \ref{cor:maxphi}. We also let
$\GE_{-}(v,i,q):=H(\varphi_{-}(v,i,q))$ and $\GE_{+}(v,i,q):=H(\varphi_{+}(v,i,q))$
. We claim that for all $q_{1}<q_{2}$ we have 
\begin{equation}
\mathbb{P}\big(\GE_{-}(v,i,q_{1})=\GE_{+}(v,i,q_{2})\big)=0.\label{eq:1}
\end{equation}
Indeed, note that the random variable $\GE_{-}(v,i,q_{1})$ is measurable
in 
\begin{equation}
\mathcal{F}(v,i,q_{1}):=\sigma\big(\big\{\eta_{u,t}:u\neq v,t\in\mathbb{R}^{n}\big\}\cup\big\{\eta_{v,t}:t_{i}\le q_{1}\big\}\big).
\end{equation}
Moreover, conditioning on $\mathcal{F}(v,i,q_{1})$, it is not hard to check
that $\GE_{+}(v,i,q_{1})$ has a continuous distribution. This finishes the proof of \eqref{eq:1}.

Next, define the event 
\begin{equation}
\mathcal{A}:=\bigcup_{v\in\Lambda}\bigcup_{i\le n}\bigcup_{\substack{q_{1},q_{2}\in\mathbb{Q}\\
q_{1}<q_{2}
}
}\big\{\GE_{-}(v,i,q_{1})=\GE_{+}(v,i,q_{2})\big\}.
\end{equation}
By \eqref{eq:1} and a union bound we have that $\mathbb{P}(\mathcal{A})=0$.
This finishes the proof of the lemma since on $\mathcal{A}^{c}$ we
cannot have two distinct minimizers to the Hamiltonian. 
\end{proof}

\subsection{Concentration}

We finish the section with a proof of Proposition~\ref{as:conc}.

\begin{proof}[Proof of Proposition~\ref{as:conc}]
    Let $\Delta\subset \Lambda$ and let $h,r> 0$. Recall the definition of  $\Omega ^{\Lambda ,\tau }$  from \eqref{set}. Consider the set of surfaces 
        \begin{equation}
        \Omega _{\Delta,h}:=\Big\{ \varphi \in \Omega ^{\Lambda ,\tau } : \frac{1}{|\Delta |} \sum _{v\in \Delta } \|\varphi _v\| ^{2H} \le h^{2H} \Big\}.
    \end{equation}
    Define also the restricted ground state $\GE _{\Delta,h}$ by 
    \begin{equation}
        \GE  _{\Delta,h}:=\min \big\{ H^{\eta, \Lambda } (\varphi ) : \varphi \in \Omega _{\Delta,h} \big\}.
    \end{equation}

    First, we would like to use Theorem~\ref{thm: BTIS} in order to prove that
    \begin{equation}\label{eq:concentration}
        \P\Big(  \big| \GE _{\Delta,h}-\E [ \GE _{\Delta,h} \mid \eta_{\Delta ^c}] \big| \ge r\ \big| \  \eta_{\Delta^c}\Big)\le 2\exp \Big(- \frac{cr^2}{h^{2H}|\Delta|} \Big).
    \end{equation}
    Indeed, conditioning on $\eta _{\Delta ^c}$, we consider the Gaussian process defined by $X_\varphi :=H^{\eta }(\varphi )$ on the index set $\varphi \in T:=\Omega _{\Delta ,h}$. For any function $\varphi \in \Omega _{\Delta ,h}$ we have that 
    \begin{equation}
        \var \big( H^\eta(\varphi )  \mid \eta_{\Delta^c} \big) = \sum _{v\in \Delta } \|\varphi _v\|^{2H}\le |\Delta | h^{2H}.
    \end{equation}
    Thus, the bound in \eqref{eq:concentration} follows from Theorem~\ref{thm: BTIS}.

    Next, for any fixed $\gamma \in \mathbb R $ we have that 
\begin{equation*}
    \P \big( |\GE^{\eta} - \gamma  | \ge r \mid \eta_{\Delta^c} \big) \le \P \big( \big|\GE_{\Delta,h} - \gamma  \big| \ge r \mid \eta_{\Delta^c} \big) +\mathbb P \big( \GE ^\eta \neq \GE _{\Delta ,h} \mid \eta_{\Delta^c} \big)
\end{equation*}
and therefore
\begin{equation*}
\begin{split}
\inf _{\gamma \in \mathbb R }\P \big( |\GE^{\eta} - \gamma  | \ge r \mid \eta_{\Delta^c} \big) &\le \inf _{\gamma \in \mathbb R } \P \big( \big|\GE_{\Delta,h} - \gamma  \big| \ge r \mid \eta_{\Delta^c} \big) +\mathbb P \big( \GE ^\eta \neq \GE _{\Delta ,h} \mid \eta_{\Delta^c} \big)\\
    &\le \P \big( \big|\GE_{\Delta,h} - \mathbb E [ \GE _h \mid \eta_{\Delta^c} ]  \big| \ge r \mid \eta_{\Delta^c} \big) +\mathbb P \big( \GE ^\eta \neq \GE _{\Delta ,h} \mid \eta_{\Delta^c} \big)\\
    &\le 2\exp \Big(- \frac{cr^2}{h^{2H}|\Delta|} \Big) +\mathbb P \Big(  \frac{1}{|\Delta |} \sum _{v\in \Delta } \|\varphi _v\| ^{2H} \ge h^{2H} \mid \eta_{\Delta^c} \Big),
\end{split}
\end{equation*}
where in the last inequality we used \eqref{eq:concentration} and the definition of $\GE _{\Delta ,h}$.
\end{proof}

\appendix
\section{Basic estimates}

\begin{proof}[Proof of Claim \ref{claim:estimate}]
We first consider the case $\|t\|\le \frac 32$. Note that for $x>0$, since $H\in(0,1)$, we have $x^H\ge \min(1,x)$. It yields
\begin{equation*}
    \begin{split}
        2\|t\|^{2H}+2-\|t-e_1\|^{2H}-\|t+e_1\|^{2H}&= 2\|t\|^{2H}+2-(\|t\|^2-2t_1+1)^H-(\|t\|^2+2t_1+1)^H\\
        &\le 2\|t\|^{2H}+2-(\min(1,\|t\|^2-2t_1+1)+ \min(1,\|t\|^2+2t_1+1))
    \end{split}
\end{equation*}
If $\|t\|^2-2|t_1|\ge 0$, then we can upperbound the right hand side by $2\|t\|^{2H}$, otherwise we get
\begin{equation}
    2\|t\|^{2H}+2-\|t-e_1\|^{2H}-\|t+e_1\|^{2H}\le 2\|t\|^{2H}-\|t\|^2+2|t_1|\le 2(\|t\|^{2H}+\|t\|). 
\end{equation}

Let us now consider the case $\|t\|\ge \frac 32$.
 Define $f:\R\to \R$ to be the function 
   \[f(s)\coloneqq\|t-se_1\|^{2H}.\]
   In particular,
   \[f'(s)= -2(t_1-s)H\|t-se_1\|^{2(H-1)}\]
   and
   \[f''(s)=2H\|t-se_1\|^{2(H-1)}+ 4H(H-1)(t_1-s)^2\|t-se_1\|^{2(H-2)}.\]
   Hence 
   \[|  f''(s)|\le \max(2H\|t-se_1\|^{2(H-1)}, |4H(H-1)|(t_1-s)^2\|t-se_1\|^{2(H-2)}) \le 2\|t-se_1\|^{2(H-1)}.\]
   We have for $\|t \|\ge \frac 32$
   \begin{equation*} 
   \begin{split}
   2\|t\|^{2H}-\|t- e_1\|^{2H}-\|t+ e_1\|^{2H}&=2f(0)-f(1)-f(-1)\le \sup_{s\in[-1,1]}|f''(s)|
   \\&\le  2\sup_{s\in[-1,1]}\|t-se_1\|^{2(H-1)}\le 2\left(\frac 1 2\right)^{2(H-1)}\le 8
   \end{split}
       \end{equation*}
       and \eqref{eq:claim 1} follows by noting that $2((3/2)^{2H}+3/2)\le 8$.
Let us now prove \eqref{eq:claim 2} 
\begin{equation*}
    \begin{split}
        2\|t\|^{2H}+2-\|t-e_1\|^{2H}-\|t+e_1\|^{2H}&=2\|t\|^{2H}+2- (\|t\|^2+1+2t_1)^H-(\|t\|^2+1-2t_1)^H\\&\ge 2\|t\|^{2H}+2- 2(\|t\|^2+1)^H
    \end{split}
\end{equation*}
where we used in the last inequality the concavity of the function $x\mapsto x^H $. Finally, using that the function $g:x\mapsto x^H+1-(x+1)^H$ is increasing on $(0,+\infty)$ we obtain for $H\in(0,1)$
\begin{equation}\label{eq:lower bound cov}
    2\|t\|^{2H}+2-\|t-e_1\|^{2H}-\|t+e_1\|^{2H}\ge 2g(\|t\|)\ge 2g(0)= 0.
\end{equation}
\end{proof}

\bibliographystyle{plain}
\bibliography{MSRE}

\begin{thebibliography}{10}

\bibitem{arous2024free}
G{\'e}rard~Ben Arous and Pax Kivimae.
\newblock The free energy of the elastic manifold.
\newblock {\em arXiv preprint arXiv:2410.19094}, 2024.

\bibitem{arous2024larkin}
G{\'e}rard~Ben Arous and Pax Kivimae.
\newblock The larkin mass and replica symmetry breaking in the elastic manifold.
\newblock {\em arXiv preprint arXiv:2410.22601}, 2024.

\bibitem{bakhtin2016inviscid}
Yuri Bakhtin.
\newblock Inviscid {B}urgers equation with random kick forcing in noncompact setting.
\newblock {\em Electronic Journal of Probability}, 21, 2016.

\bibitem{bakhtin2014space}
Yuri Bakhtin, Eric Cator, and Konstantin Khanin.
\newblock Space-time stationary solutions for the {B}urgers equation.
\newblock {\em Journal of the American Mathematical Society}, 27(1):193--238, 2014.

\bibitem{bakhtin2022dynamic}
Yuri Bakhtin and Hong-Bin Chen.
\newblock Dynamic polymers: invariant measures and ordering by noise.
\newblock {\em Probability Theory and Related Fields}, 183(1):167--227, 2022.

\bibitem{bakhtin2018zero}
Yuri Bakhtin and Liying Li.
\newblock Zero temperature limit for directed polymers and inviscid limit for stationary solutions of stochastic {B}urgers equation.
\newblock {\em Journal of Statistical Physics}, 172(5):1358--1397, 2018.

\bibitem{bakhtin2019thermodynamic}
Yuri Bakhtin and Liying Li.
\newblock Thermodynamic limit for directed polymers and stationary solutions of the {B}urgers equation.
\newblock {\em Communications on Pure and Applied Mathematics}, 72(3):536--619, 2019.

\bibitem{balents1993large}
Leon Balents and Daniel~S Fisher.
\newblock Large-n expansion of (4-$\varepsilon$)-dimensional oriented manifolds in random media.
\newblock {\em Physical Review B}, 48(9):5949, 1993.

\bibitem{ben2024landscape}
G{\'e}rard Ben~Arous, Paul Bourgade, and Benjamin McKenna.
\newblock Landscape complexity beyond invariance and the elastic manifold.
\newblock {\em Communications on Pure and Applied Mathematics}, 77(2):1302--1352, 2024.

\bibitem{berger2019entropy}
Quentin Berger and Niccol{\`o} Torri.
\newblock {Entropy-controlled Last-Passage Percolation}.
\newblock {\em The Annals of Applied Probability}, 29(3):1878 -- 1903, 2019.

\bibitem{berger2021beyond}
Quentin Berger and Niccol{\`o} Torri.
\newblock Beyond hammersley’s last-passage percolation: a discussion on possible local and global constraints.
\newblock {\em Annales de l’Institut Henri Poincar{\'e} D}, 8(2):213--241, 2021.

\bibitem{colding2011course}
Tobias~H Colding and William~P Minicozzi.
\newblock {\em A course in minimal surfaces}, volume 121.
\newblock American Mathematical Soc., 2011.

\bibitem{comets2018brownian}
Francis Comets and Cl{\'e}ment Cosco.
\newblock Brownian polymers in {P}oissonian environment: A survey.
\newblock {\em arXiv preprint arXiv:1805.10899}, 2018.

\bibitem{dario2023random}
Paul Dario, Matan Harel, and Ron Peled.
\newblock Random-field random surfaces.
\newblock {\em Probability Theory and Related Fields}, pages 1--68, 2023.

\bibitem{dario2024quantitative}
Paul Dario, Matan Harel, and Ron Peled.
\newblock Quantitative disorder effects in low-dimensional spin systems.
\newblock {\em Communications in Mathematical Physics}, 405(9):212, 2024.

\bibitem{de2022regularity}
Camillo De~Lellis.
\newblock The regularity theory for the area functional (in geometric measure theory).
\newblock In {\em International Congress of Mathematicians}, 2022.

\bibitem{dembin2024minimal}
Barbara Dembin, Dor Elboim, Daniel Hadas, and Ron Peled.
\newblock Minimal surfaces in random environment.
\newblock {\em arXiv preprint arXiv:2401.06768}, 2024.

\bibitem{emig1998roughening}
Thorsten Emig and Thomas Nattermann.
\newblock Roughening transition of interfaces in disordered systems.
\newblock {\em Physical review letters}, 81(7):1469, 1998.

\bibitem{emig1999disorder}
Thorsten Emig and Thomas Nattermann.
\newblock Disorder driven roughening transitions of elastic manifolds and periodic elastic media.
\newblock {\em The European Physical Journal B-Condensed Matter and Complex Systems}, 8(4):525--546, 1999.

\bibitem{ferrero2021creep}
Ezequiel~E Ferrero, Laura Foini, Thierry Giamarchi, Alejandro~B Kolton, and Alberto Rosso.
\newblock Creep motion of elastic interfaces driven in a disordered landscape.
\newblock {\em Annual Review of Condensed Matter Physics}, 12:111--134, 2021.

\bibitem{forgacs1991behavior}
Gabor Forgacs, Reinhard Lipowsky, and Theo~M Nieuwenhuizen.
\newblock The behavior of interfaces in ordered and disordered systems.
\newblock {\em Phase transitions and critical phenomena}, 14:135--363, 1991.

\bibitem{giamarchi2009disordered}
Thierry Giamarchi.
\newblock Disordered elastic media.
\newblock {\em Encyclopedia of complexity and systems science}, 112:2019--2038, 2009.

\bibitem{giamarchi1998statics}
Thierry Giamarchi and Pierre Le~Doussal.
\newblock Statics and dynamics of disordered elastic systems.
\newblock In {\em Spin glasses and random fields}, pages 321--356. World Scientific, 1998.

\bibitem{grinstein1983surface}
G~Grinstein and Shang-keng Ma.
\newblock Surface tension, roughening, and lower critical dimension in the random-field ising model.
\newblock {\em Physical Review B}, 28(5):2588, 1983.

\bibitem{grinstein1982roughening}
Geoffrey Grinstein and Shang-keng Ma.
\newblock Roughening and lower critical dimension in the random-field {I}sing model.
\newblock {\em Physical Review Letters}, 49(9):685, 1982.

\bibitem{halpin1989diverse}
Timothy Halpin-Healy.
\newblock Diverse manifolds in random media.
\newblock {\em Physical review letters}, 62(4):442, 1989.

\bibitem{halpin1990disorder}
Timothy Halpin-Healy.
\newblock Disorder-induced roughening of diverse manifolds.
\newblock {\em Physical Review A}, 42(2):711, 1990.

\bibitem{Kallenberg}
Olav Kallenberg.
\newblock {\em Foundations of modern probability}, volume~99 of {\em Probability Theory and Stochastic Modelling}.
\newblock Springer, Cham, third edition, 2021.

\bibitem{kardar1987domain}
Mehran Kardar.
\newblock Domain walls subject to quenched impurities.
\newblock {\em Journal of Applied Physics}, 61(8):3601--3604, 1987.

\bibitem{larkin1970effect}
Anatoly~I Larkin.
\newblock Effect of inhomogeneties on the structure of the mixed state of superconductors.
\newblock {\em Soviet Journal of Experimental and Theoretical Physics}, 31:784, 1970.

\bibitem{meeks2012survey}
William Meeks and Joaqu{\'\i}n P{\'e}rez.
\newblock {\em A survey on classical minimal surface theory}, volume~60.
\newblock American Mathematical Soc., 2012.

\bibitem{mezard1990interfaces}
Marc M{\'e}zard and Giorgio Parisi.
\newblock Interfaces in a random medium and replica symmetry breaking.
\newblock {\em Journal of Physics A: Mathematical and General}, 23(23):L1229, 1990.

\bibitem{mezard1991replica}
Marc M{\'e}zard and Giorgio Parisi.
\newblock Replica field theory for random manifolds.
\newblock {\em Journal de Physique I}, 1(6):809--836, 1991.

\bibitem{mezard1992manifolds}
Marc M{\'e}zard and Giorgio Parisi.
\newblock Manifolds in random media: two extreme cases.
\newblock {\em Journal de Physique I}, 2(12):2231--2242, 1992.

\bibitem{nattermann1988random}
T~Nattermann and J~Villain.
\newblock Random-field {I}sing systems: A survey of current theoretical views.
\newblock {\em Phase transitions}, 11(1-4):5--51, 1988.

\bibitem{nattermann1987interface}
Thomas Nattermann.
\newblock Interface roughening in systems with quenched random impurities.
\newblock {\em EPL (Europhysics Letters)}, 4(11):1241, 1987.

\bibitem{ossiander1989certain}
Mina Ossiander and Edward~C Waymire.
\newblock Certain positive-definite kernels.
\newblock {\em Proceedings of the American Mathematical Society}, 107(2):487--492, 1989.

\bibitem{otto2025minimizing}
Felix Otto, Matteo Palmieri, and Christian Wagner.
\newblock On minimizing curves in a brownian potential.
\newblock {\em arXiv preprint arXiv:2503.12471}, 2025.

\bibitem{vershynin2020high}
Roman Vershynin.
\newblock {\em High-dimensional probability}.
\newblock Cambridge university press, 2018.

\bibitem{villain1982commensurate}
J~Villain.
\newblock Commensurate-incommensurate transition with frozen impurities.
\newblock {\em Journal de Physique Lettres}, 43(15):551--558, 1982.

\bibitem{wiese2003functional}
Kay~J{\"o}rg Wiese.
\newblock The functional renormalization group treatment of disordered systems, a review.
\newblock In {\em Annales Henri Poincar{\'e}}, volume 4, Suppl 1, pages S505--S528. Springer, 2003.

\bibitem{wiese2022theory}
Kay~J{\"o}rg Wiese.
\newblock Theory and experiments for disordered elastic manifolds, depinning, avalanches, and sandpiles.
\newblock {\em Reports on Progress in Physics}, 85(8):086502, 2022.

\bibitem{zeitouni2014gaussian}
Ofer Zeitouni.
\newblock Gaussian fields.
\newblock \url{https://www.wisdom.weizmann.ac.il/~zeitouni/notesGauss.pdf}, 2014.

\end{thebibliography}

\end{document}